\begin{document}


\newtheorem{theo}{Theorem}
\newtheorem{lem}{Lemma}
\newtheorem{question}{Question}
\newtheorem{cor}{Corollary}
\newtheorem{prop}{Proposition}
\newtheorem{Question}{Question}
\newtheorem{prb}{Problem}
\newtheorem{definition}{Definition}
\newtheorem{example}{Example}
\theoremstyle{definition}
\newtheorem{dfn}{Definition}
\newtheorem{rem}{Remark}

\newcommand{\sep}{, }
\newcommand{\cs}{\mathrm{cs}}
\newcommand{\sbr}{\mathrm{sb}}

\title[On groups with countable
$\mathrm{cs}^*$-character]{The topological structure of (homogeneous) spaces and groups with
countable $\bold{cs}^*$-character}
\author[T.Banakh, L.Zdomskyy]{Taras Banakh and Lubomyr Zdomsky}

\begin{abstract}
In this paper we introduce and study three new cardinal
topological invariants called the $\mathrm{cs}^*$-,
$\mathrm{cs}$-, and $\mathrm{sb}$-characters. The class of
topological spaces with countable $\mathrm{cs}^*$-character is
closed under many topological operations and contains all
$\aleph$-spaces and all spaces with point-countable
$\mathrm{cs}^*$-network. Our principal result states that each
non-metrizable sequential topological group with countable
$\mathrm{cs}^*$-character has countable pseudo-character and
contains an open $k_\omega$-subgroup. This result is specific for
topological groups: under Martin Axiom there exists a sequential
topologically homogeneous $k_\omega$-space $X$ with
$\aleph_0=\mathrm{cs}^*_\chi(X)<\psi(X)$.
\end{abstract}


\maketitle 

\section*{Introduction}

In this paper we introduce and study three new local cardinal
invariants of topological spaces called the
$\mathrm{sb}$-character, the $\mathrm{cs}$-character and
$\mathrm{cs}^*$-character, and describe the structure of
sequential topological groups with countable
$\mathrm{cs}^*$-character. All these characters are based on the
notion of a {\em network} at a point $x$ of a topological space
$X$, under which we understand a collection $\mathcal N$ of
subsets of $X$ such that for any neighborhood $U\subset X$ of $x$
there is an element $N\in\mathcal N$ with $x\in N\subset U$, see
\cite{Lin}.

A subset $B$ of a topological space $X$ is called a {\em
sequential barrier} at a point $x\in X$ if for any sequence
$(x_n)_{n\in\omega}\subset X$ convergent to $x$, there is
$m\in\omega$ such that $x_n\in B$ for all $n\ge m$, see
\cite{Lin}. It is clear that each neighborhood of a point $x\in X$
is a sequential barrier for $x$ while the converse in true for
Fr\'echet-Urysohn spaces.

Under a {\em $\mathrm{sb}$-network\/} at a point $x$ of a
topological space $X$ we shall understand a network at $x$
consisting of sequential barriers at $x$. In other words, a
collection $\mathcal N$ of subsets of $X$ is a
$\mathrm{sb}$-network at $x$ if for any neighborhood $U$ of $x$
there is an element $N$ of $\mathcal N$ such that for any sequence
$(x_n)\subset X$ convergent to $x$ the set $N$ contains almost all
elements of $(x_n)$. Changing two quantifiers in this definition
by their places we get a definition of a $\mathrm{cs}$-network at
$x$.

Namely, we define a family $\mathcal N$ of subsets of a
topological space $X$ to be a {\em $\mathrm{cs}$-network} (resp. a
{\em $\mathrm{cs}^*$-network}) at a point $x\in X$ if for any
neighborhood $U\subset X$ of $x$ and any sequence $(x_n)\subset X$
convergent to $x$ there is an element $N\in\mathcal N$ such that
$N\subset U$ and $N$ contains almost all (resp. infinitely many)
members of the sequence $(x_n)$. A family $\mathcal N$ of subsets
of a topological space $X$ is called a {\em $\mathrm{cs}$-network}
(resp. {\em $\mathrm{cs}^*$-network}) if it is a
$\mathrm{cs}$-network (resp. $\mathrm{cs}^*$-network) at each
point $x\in X$, see \cite{Nag}.

The smallest size $|\mathcal N|$ of an $\mathrm{sb}$-network
(resp. $\mathrm{cs}$-network, $\mathrm{cs}^*$-network) $\mathcal
N$ at a point $x\in X$ is called the {\em $\mathrm{sb}$-character}
(resp. {\em $\mathrm{cs}$-character, $\mathrm{cs}^*$-character})
of $X$ at the point $x$ and is denoted by $\mathrm{sb}_\chi(X,x)$
(resp. $\mathrm{cs}_\chi(X,x)$, $\mathrm{cs}^*_\chi(X,x)$). The
cardinals $\mathrm{sb}_\chi(X)=\sup_{x\in
X}\mathrm{sb}_\chi(X,x)$, $\mathrm{cs}_\chi(X)=\sup_{x\in
X}\mathrm{cs}_\chi(X,x)$ and $\mathrm{cs}^*_\chi(X)=\sup_{x\in X}
\mathrm{cs}^*_\chi(X,x)$ are called the {\em
$\mathrm{sb}$-character}, {\em $\cs$-character} and {\em
$\mathrm{cs}^*$-character} of the topological space $X$,
respectively. For the empty topological space $X=\emptyset$ we put
$\mathrm{sb}_\chi(X)=\mathrm{cs}_\chi(X)=\mathrm{cs}^*_\chi(X)=1$.

In the sequel we shall say that a topological space $X$ has {\em
countable $\mathrm{sb}$-character (resp. $\mathrm{cs}$-,
$\mathrm{cs}^*$-character})  if $\mathrm{sb}_\chi(X)\le\aleph_0$
(resp. $\mathrm{cs}_\chi(X)\le\aleph_0$,
$\mathrm{cs}^*_\chi(X)\le\aleph_0$). In should be mentioned that
under different names topological spaces with countable
$\mathrm{sb}$- or $\mathrm{cs}$-character have already occured in
topological literature. In particular, a topological space has
countable $\mathrm{cs}$-character if and only if it is
$csf$-countable in the sense of \cite{Lin};
 a (sequential) space $X$ has countable
$\mathrm{sb}$-character if and only if it is universally
$csf$-countable in the sense of \cite{Lin} (if and only if it is
weakly first-countable in the sense of \cite{Ar} if and only if it
is 0-metrizable in the sense of Nedev \cite{Ned}).  From now on,
all the topological spaces considered in the paper are
$T_1$-spaces. At first we consider the interplay between the
characters introduced above.

\begin{prop}\label{prop1} Let $X$ be a topological space. Then
\begin{enumerate}
\item $\mathrm{cs}^*_\chi(X)\le\mathrm{cs}_\chi(X)\le \mathrm{sb}_\chi(X)\le\chi(X)$;
\item $\chi(X)=\mathrm{sb}_\chi(X)$ if $X$ is Fr\'echet-Urysohn;
\item $\mathrm{cs}^*_\chi(X)<\aleph_0$ iff $\mathrm{cs}_\chi(X)<\aleph_0$ iff $\mathrm{sb}_\chi(X)<\aleph_0$
iff $\mathrm{cs}^*_\chi(X)=1$ iff $\mathrm{cs}_\chi(X)=1$ iff
$\mathrm{sb}_\chi(X)=1$ iff each convergent sequence in $X$ is
trivial;
\item $\mathrm{sb}_\chi(X)\le 2^{\mathrm{cs}^*_\chi(X)}$;
\item $\mathrm{cs}_\chi(X)\le \mathrm{cs}^*_\chi(X)\cdot\sup\{\big|[\kappa]^{\le\omega}\big|:\kappa<\mathrm{cs}^*_\chi(X)\}\le\big(\mathrm{cs}^*_\chi(X)\big)^{\aleph_0}$
where\newline
$[\kappa]^{\le\omega}=\{A\subset\kappa:|A|\le\aleph_0\}$.
\end{enumerate}
\end{prop}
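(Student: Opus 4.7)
My plan is to verify the five items in sequence, all of which flow from unpacking the definitions together with one key construction---forming unions over subfamilies of a $\mathrm{cs}^*$-network. For (1), the three inequalities are immediate: a local base is an $\mathrm{sb}$-network (neighborhoods are sequential barriers), an $\mathrm{sb}$-network is a $\mathrm{cs}$-network (a barrier $N\subset U$ eventually contains every $(x_n)\to x$), and a $\mathrm{cs}$-network is a $\mathrm{cs}^*$-network (``almost all'' is stronger than ``infinitely many''). For (2), in a Fr\'echet--Urysohn space I would show that each member $N$ of an $\mathrm{sb}$-network at $x$ is already a neighborhood of $x$: otherwise $x\in\overline{X\setminus N}$ would yield a sequence in $X\setminus N$ converging to $x$, contradicting that $N$ is a sequential barrier at $x$.

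For (3), the only nontrivial implication is that finite $\mathrm{cs}^*$-character forces every convergent sequence to be trivial. Given a finite $\mathrm{cs}^*$-network $\{N_1,\dots,N_k\}$ at $x$ and a nontrivial $(z_n)\to x$ consisting of distinct points different from $x$, set $I=\{i:N_i\text{ contains infinitely many }z_n\}$; for each $i\in I$ pick $z_{n_i}\in N_i\setminus\{x\}$ and use the $T_1$ hypothesis to form the open neighborhood $U=X\setminus\{z_{n_i}:i\in I\}$ of $x$. Then no $N_i$ with $i\in I$ is contained in $U$, and no $N_i$ with $i\notin I$ contains infinitely many $z_n$, so the $\mathrm{cs}^*$-property fails for $U$ and $(z_n)$---a contradiction. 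The remaining equivalences are routine, since trivial convergent sequences at $x$ make $\{\{x\}\}$ itself an $\mathrm{sb}$-network at $x$, from which the six characterizations cascade via (1).

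The heart of (4) and (5) is a single observation: for any $\mathrm{cs}^*$-network $\mathcal N$ at $x$ and any neighborhood $U$ of $x$, setting $\mathcal N_U=\{N\in\mathcal N:N\subset U\}$, the set $\{n:y_n\notin\bigcup\mathcal N_U\}$ is finite for every $(y_n)\to x$, since otherwise the subsequence indexed by this infinite set would violate the $\mathrm{cs}^*$-property of $\mathcal N$ at $U$. This gives (4) at once: $M_U:=(\bigcup\mathcal N_U)\cup\{x\}$ is a sequential barrier at $x$ with $x\in M_U\subset U$, so the family $\{(\bigcup\mathcal F)\cup\{x\}:\mathcal F\subset\mathcal N\}$, of size at most $2^{\mathrm{cs}^*_\chi(X,x)}$, is an $\mathrm{sb}$-network at $x$; the global bound follows upon taking suprema in $x$.

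For (5) I refine the observation: since each covered $y_n$ lies in some element of $\mathcal N_U$, choosing one such element per $n$ extracts a \emph{countable} subfamily $\mathcal F\subset\mathcal N_U$ whose union covers almost all $y_n$. Thus the family of countable unions of members of $\mathcal N$ is a $\mathrm{cs}$-network at $x$, of size $|[\kappa_x]^{\le\omega}|$ with $\kappa_x=\mathrm{cs}^*_\chi(X,x)$. The one mildly delicate step is to recast the pointwise bound $\sup_x|[\kappa_x]^{\le\omega}|$ in the global form stated: I would split on whether $\kappa_x<\mathrm{cs}^*_\chi(X)$ (where $\sup\{|[\kappa]^{\le\omega}|:\kappa<\mathrm{cs}^*_\chi(X)\}$ dominates directly) or $\kappa_x=\mathrm{cs}^*_\chi(X)$ (where one invokes $|[\kappa]^{\le\omega}|\le\kappa\cdot\sup_{\kappa'<\kappa}|[\kappa']^{\le\omega}|$, so that the leading factor $\mathrm{cs}^*_\chi(X)$ absorbs the count), with the crude majorant $(\mathrm{cs}^*_\chi(X))^{\aleph_0}$ then being immediate. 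This cardinal-arithmetic bookkeeping is the only spot I anticipate any friction; the underlying topology is straightforward throughout.
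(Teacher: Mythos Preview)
Your arguments for items (1)--(4) are correct and match the paper's approach; in particular, the paper gives exactly your observation for (4), namely that unions of subfamilies of a $\mathrm{cs}^*$-network form an $\mathrm{sb}$-network.

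The gap is in item (5). Your construction---taking \emph{all} countable unions from a $\mathrm{cs}^*$-network $\mathcal N$ of size $\kappa_x=\mathrm{cs}^*_\chi(X,x)$---does yield a $\mathrm{cs}$-network, but only gives the bound $\mathrm{cs}_\chi(X,x)\le|[\kappa_x]^{\le\omega}|=\kappa_x^{\aleph_0}$. This suffices for the cruder inequality $\mathrm{cs}_\chi(X)\le(\mathrm{cs}^*_\chi(X))^{\aleph_0}$, but the cardinal-arithmetic step you propose to reach the sharper bound, namely
\[
|[\kappa]^{\le\omega}|\le\kappa\cdot\sup_{\kappa'<\kappa}|[\kappa']^{\le\omega}|,
\]
is \emph{false} when $\mathrm{cf}(\kappa)=\omega$. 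For instance, under GCH with $\kappa=\aleph_\omega$ the left side is $\aleph_\omega^{\aleph_0}=\aleph_{\omega+1}$ while the right side is $\aleph_\omega\cdot\sup_n\aleph_n^{\aleph_0}=\aleph_\omega$. So your ``bookkeeping'' cannot be completed as written, and the gap is genuinely topological, not just arithmetical.

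The paper's proof of (5) avoids this by a more refined construction. One writes $\mathcal N=\bigcup_{\alpha<\lambda}\mathcal N_\alpha$ as an increasing union along $\lambda=\mathrm{cf}(|\mathcal N|)$ with each $|\mathcal N_\alpha|<|\mathcal N|$, and takes as the candidate $\mathrm{cs}$-network only the countable unions \emph{within a single} $\mathcal N_\alpha$:
\[
\mathcal M=\{\textstyle\bigcup\mathcal C:\mathcal C\in[\mathcal N_\alpha]^{\le\omega},\ \alpha<\lambda\}.
\]
This family has the desired size $\le|\mathcal N|\cdot\sup\{|[\kappa]^{\le\omega}|:\kappa<|\mathcal N|\}$ by construction. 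The price is that verifying $\mathcal M$ is a $\mathrm{cs}$-network is no longer immediate: given $U\ni x$ and $S\to x$, one forms for each $\alpha$ the best approximation $S_\alpha=S\cap\bigcup\{N\in\mathcal N_\alpha:N\subset U\}$ and must show some $S\setminus S_\alpha$ is finite. This splits into two cases: if $\lambda$ is uncountable the $S_\alpha$ stabilize, while if $\lambda\le\omega$ one uses an infinite pseudo-intersection of the $S\setminus S_\alpha$ to reach a contradiction with the $\mathrm{cs}^*$-property. That cofinality decomposition is the missing idea in your sketch.
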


Here ``iff'' is an abbreviation for ``if and only if''. The Arens'
space $S_2$ and the sequential fan $S_\omega$ give us simple
examples distinguishing between some of the characters considered
above. We recall that the {\em Arens' space} $S_2$ is the set
$\{(0,0),(\frac1n,0),(\frac1n,\frac1{nm}):n,m\in\mathbb{N}\}\subset
\mathbb{R}^2$ carrying the strongest topology inducing the
original planar topology on the convergent sequences
$C_0=\{(0,0),(\frac1n,0):n\in\mathbb{N}\}$ and $C_n=\{(\frac1n,0),
(\frac1n,\frac1{nm}):m\in\mathbb{N}\}$, $n\in\mathbb{N}$. The
quotient space $S_\omega=S_2/C_0$ obtained from the Arens' space
$S_2$ by identifying the points of the sequence $C_0$ is called
the {\em sequential fan}, see \cite{Lin}. The sequential fan
$S_\omega$ is the simplest example of a non-metrizable
Fr\'echet-Urysohn space while $S_2$ is the simplest example of a
sequential space which is not Fr\'echet-Urysohn.

We recall that a topological space $X$ is {\em sequential} if a
subset $A\subset X$ if closed if and only if $A$ is {\em
sequentially closed}\/ in the sense that $A$ contain the limit
point of any sequence $(a_n)\subset A$, convergent in $X$. A
topological space $X$ is {\em Fr\'echet-Urysohn} if for any
cluster point $a\in X$ of a subset $A\subset X$ there is a
sequence $(a_n)\subset A$, convergent to $a$.

Observe that
$\aleph_0=\mathrm{cs}^*_\chi(S_2)=\mathrm{cs}_\chi(S_2)=\mathrm{sb}_\chi(S_2)<\chi(S_2)=\mathfrak
d $ while
$\aleph_0=\mathrm{cs}^*_\chi(S_\omega)=\mathrm{cs}_\chi(S_\omega)<\mathrm{sb}_\chi(S_\omega)=\chi(S_\omega)=\mathfrak
d$. Here $\mathfrak d$ is the well-known in Set Theory small
uncountable cardinal equal to the cofinality of the partially
ordered set $\mathbb{N}^\omega$ endowed with the natural partial
order: $(x_n)\le (y_n)$ iff $x_n\le y_n$ for all $n$, see
\cite{Va}. Besides $\mathfrak d$, we will need two other small
cardinals: $\mathfrak b$ defined as the smallest size of a subset
of uncountable cofinality in $(\mathbb{N}^\omega,\le)$, and
$\mathfrak p$ equal to the smallest size $|\mathcal F|$ of a
family of infinite subsets of $\omega$ closed under finite
intersections and having no infinite pseudo-intersection in the
sense that there is no infinite subset $I\subset\omega$ such that
the complement $I\setminus F$ is finite for any $F\in\mathcal F$,
see \cite{Va}, \cite{vD}. It is known that $\aleph_1\le\mathfrak
p\le\mathfrak b\le\mathfrak d\le\mathfrak c$ where $\mathfrak c$
stands for the size of continuum. Martin Axiom implies $\mathfrak
p=\mathfrak b=\mathfrak d=\mathfrak c$, \cite{MS}. On the other
hand, for any uncountable regular cardinals $\lambda\le\kappa$
there is a model of ZFC with $\mathfrak p=\mathfrak b=\mathfrak
d=\lambda$ and $\mathfrak c=\kappa$, see \cite[5.1]{vD}.

Unlike to the cardinal invariants $\mathrm{cs}_\chi$,
$\mathrm{sb}_\chi$ and $\chi$ which can be distinguished on simple
spaces, the difference between the cardinal invariants
$\mathrm{cs}_\chi$ and $\mathrm{cs}^*_\chi$ is more subtle: they
cannot be distinguished in some models of Set Theory!

\begin{prop}\label{cs=cs*}
Let $X$ be a topological space. Then
$\mathrm{cs}^*_\chi(X)=\mathrm{cs}_\chi(X)$ provided one of the
following conditions is satisfied:
\begin{enumerate}
\item $\mathrm{cs}^*_\chi(X)<\mathfrak p$;
\item $\kappa^{\aleph_0}\le\mathrm{cs}^*_\chi(X)$ for any cardinal
$\kappa<\mathrm{cs}^*_\chi(X)$;
\item $\mathfrak p=\mathfrak c$ and $\lambda^\omega\le\kappa$ for any
cardinals $\lambda<\kappa\ge\mathfrak c$;
\item $\mathfrak p=\mathfrak c$ (this is so under MA)
 and $X$ is countable;
\item the Generalized Continuum Hypothesis holds.
\end{enumerate}
\end{prop}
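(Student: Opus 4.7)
The inequality $\mathrm{cs}^*_\chi(X)\le\mathrm{cs}_\chi(X)$ is already part of Proposition \ref{prop1}(1), so in each case it suffices to bound $\mathrm{cs}_\chi(X,x)\le\mathrm{cs}^*_\chi(X,x)$ at an arbitrary point $x\in X$. The substantive work is concentrated in item~(1); the remaining items (2)--(5) will fall out by combining (1), Proposition \ref{prop1}(5), and routine cardinal arithmetic, so the game plan is to treat (1) carefully and then dispatch the rest.

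For (1), fix $x\in X$ and a $\mathrm{cs}^*$-network $\mathcal{N}$ at $x$ of size $\kappa := \mathrm{cs}^*_\chi(X,x)<\mathfrak{p}$. I would close $\mathcal{N}$ under finite unions to obtain $\mathcal{N}'$ with $|\mathcal{N}'|=\kappa$, and claim that $\mathcal{N}'$ is a $\mathrm{cs}$-network at~$x$. Suppose it is not: there are a neighborhood $U\ni x$ and a sequence $(x_n)\to x$ such that no $M\in\mathcal{N}'$ with $M\subset U$ contains almost all $x_n$. Writing $\mathcal{N}_U := \{N\in\mathcal{N} : N\subset U\}$ and $A_N := \{n\in\omega : x_n\in N\}$, the failure translates into: for every finite $\mathcal{F}\subset\mathcal{N}_U$ the set $\omega\setminus\bigcup_{N\in\mathcal{F}}A_N$ is infinite. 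Hence the family
\[
\mathcal{G} := \bigl\{\,\omega\setminus(A_{N_1}\cup\cdots\cup A_{N_k}) : k\in\omega,\ N_1,\ldots,N_k\in\mathcal{N}_U\,\bigr\}
\]
consists of infinite sets, is closed under finite intersections, and has cardinality $\le\kappa<\mathfrak{p}$. The defining property of $\mathfrak{p}$ then supplies an infinite pseudo-intersection $I\subset\omega$ of $\mathcal{G}$, so that $I\cap A_N$ is finite for every $N\in\mathcal{N}_U$. But $(x_n)_{n\in I}$ is a subsequence converging to $x$, and applying the $\mathrm{cs}^*$-network property of $\mathcal{N}$ to this subsequence together with $U$ produces some $N\in\mathcal{N}_U$ for which $I\cap A_N$ is infinite --- a contradiction. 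Thus $\mathcal{N}'$ works and $\mathrm{cs}_\chi(X,x)\le\kappa$.

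Item~(2) is immediate from Proposition \ref{prop1}(5), whose hypothesis forces $\sup\{|[\kappa]^{\le\omega}| : \kappa<\mathrm{cs}^*_\chi(X)\}\le\mathrm{cs}^*_\chi(X)$. Item~(5) reduces to (2), since under GCH we have $\kappa^{\aleph_0}\le\kappa^+\le\mathrm{cs}^*_\chi(X)$ whenever $\kappa<\mathrm{cs}^*_\chi(X)$. For (3) I split on the value of $\mathrm{cs}^*_\chi(X)$: if $\mathrm{cs}^*_\chi(X)<\mathfrak{c}=\mathfrak{p}$ apply (1), and if $\mathrm{cs}^*_\chi(X)\ge\mathfrak{c}$ the arithmetic hypothesis $\lambda^\omega\le\kappa$ for $\lambda<\kappa\ge\mathfrak{c}$ is exactly the hypothesis of~(2). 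Finally (4) reduces to (3): for countable $X$ one has $\mathrm{cs}^*_\chi(X)\le 2^{|X|}\le\mathfrak{c}$, so the only instance of (3)'s arithmetic condition that is ever invoked is $\kappa=\mathfrak{c}$, and this is supplied by Bell's theorem ($\mathfrak{p}=\mathfrak{c}$ gives MA for $\sigma$-centered posets, hence $\lambda^\omega\le\mathfrak{c}$ for $\lambda<\mathfrak{c}$). The single genuine obstacle in the whole proposition is the pseudo-intersection argument of (1); once that lemma is in place, everything else is bookkeeping on top of Proposition \ref{prop1}(5).
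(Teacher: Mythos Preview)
Your proof is correct and follows essentially the same route as the paper: the pseudo-intersection argument for (1) is identical (the paper works with the sets $S\setminus N$ rather than index sets $A_N$, but this is cosmetic), and items (2)--(5) are likewise derived from (1) together with Proposition~\ref{prop1}(5).

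Two minor remarks. First, your reduction of (5) to (2) tacitly assumes $\kappa$ is infinite when writing $\kappa^{\aleph_0}\le\kappa^+$; for $\mathrm{cs}^*_\chi(X)\le\aleph_0$ one should instead invoke (1), as the paper does by deriving (5) from (3). Second, the appeal to Bell's theorem in (4) is unnecessary: for $\lambda<\mathfrak c$ one has $\lambda^\omega\le(2^{\aleph_0})^{\aleph_0}=\mathfrak c$ by elementary cardinal arithmetic, with no use of $\mathfrak p=\mathfrak c$ beyond the split into cases.
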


Unlike to the usual character, the $\mathrm{cs}^*$-,
$\mathrm{cs}$-, and $\mathrm{sb}$-characters behave nicely with
respect to many countable topological operations.

Among such operation there are: the Tychonov product, the
box-product, producing a sequentially homeomorphic copy, taking
image under a sequentially open map, and forming inductive
topologies.

As usual, under {\em the box-product} $\Box_{i\in\mathcal{I}}X_i$
of topological spaces $X_i$, $i\in\mathcal{I}$, we understand the
Cartesian product $\prod_{i\in\mathcal{I}}X_i$ endowed with the
box-product topology generated by the base consisting of products
$\prod_{i\in\mathcal{I}}U_i$ where each $U_i$ is open in $X_i$. In
contrast, by $\prod_{i\in\mathcal{I}}X_i$ we denote the usual
Cartesian product of the spaces $X_i$, endowed with the Tychonov
product topology.

We say that a topological space $X$ carries {\em the inductive
topology} with respect to a cover $\mathcal{C}$ of $X$ if a subset
$F\subset X$ is closed in $X$ if and only if the intersection
$F\cap C$ is closed in $C$ for each element $C\in \mathcal{C}$.
For a cover $\mathcal{C}$ of $X$ let
$\mathrm{ord}(\mathcal{C})=\sup_{x\in
X}\mathrm{ord}(\mathcal{C},x)$ where
$\mathrm{ord}(\mathcal{C},x)=|\{C\in\mathcal{C}:x\in C\}|$. A
topological space $X$ carrying the inductive topology with respect
to a countable cover by closed metrizable (resp. compact, compact
metrizable) subspaces is called an {\em
$\mathcal{M}_\omega$-space} (resp. a {\em $k_\omega$-space}, {\em
$\mathcal{M}\mathcal{K}_\omega$-space}).

A function $f:X\to Y$ between topological spaces is called {\em
sequentially continuous} if for any convergent sequence $(x_n)$ in
$X$ the sequence $(f(x_n))$ is convergent in $Y$ to $f(\lim x_n)$;
$f$ is called a {\em sequential homeomorphism} if $f$ is bijective
and both $f$ and $f^{-1}$ are sequentially continuous. Topological
spaces $X,Y$ are defined to be {\em sequentially homeomorphic} if
there is a sequential homeomorphism $h:X\to Y$. Observe that two
spaces are sequentially homeomorphic if and only if their
sequential coreflexions are homeomorphic. Under {\em the
sequential coreflexion} $\sigma X$ of a topological space $X$ we
understand $X$ endowed with the topology consisting of all
sequentially open subsets of $X$ (a subset $U$ of $X$ is {\em
sequentially open} if its complement is sequentially closed in
$X$; equivalently $U$ is a sequential barrier at each point $x\in
U$). Note that the identity map $\mathrm{id}:\sigma X\to X$ is
continuous while its inverse is sequentially continuous, see
\cite{Lin}.

A map $f:X\to Y$ is {\em sequentially open} if for any point
$x_0\in X$ and a sequence $S\subset Y$ convergent to $f(x_0)$
there is a sequence $T\subset X$ convergent to $x_0$ and such that
$f(T)\subset S$. Observe that a bijective map $f$ is sequentially
open if its inverse $f^{-1}$ is sequentially continuous.

The following technical Proposition is an easy consequence of the
corresponding definitions.

\begin{prop}\label{prop3}
\begin{enumerate}
\item If $X$ is a subspace of a topological space $Y$, then
$\mathrm{cs}^*_\chi(X)\le\mathrm{cs}^*_\chi(Y)$,
$\mathrm{cs}_\chi(X)\le\mathrm{cs}_\chi(Y)$ and
$\mathrm{sb}_\chi(X)\le\mathrm{sb}(Y)$.
\item If $f:X\to Y$ is a surjective continuous sequentially open map between
topological spaces, then $\mathrm{cs}^*_\chi(Y)\le
\mathrm{cs}^*_\chi(X)$ and
$\mathrm{sb}_\chi(Y)\le\mathrm{sb}_\chi(X)$.
\item If $f:X\to Y$ is a surjective sequentially continuous
sequentially open map between topological spaces, then
$\min\{\mathrm{cs}^*_\chi(Y),\aleph_1\}\le
\min\{\mathrm{cs}^*_\chi(X),\aleph_1\}$,
$\min\{\mathrm{cs}_\chi(Y),\aleph_1\}\le
\min\{\mathrm{cs}_\chi(X),\aleph_1\}$, and
$\min\{\mathrm{sb}_\chi(Y),\aleph_1\}\le\min\{\mathrm{sb}_\chi(X),\aleph_1\}$.
\item If $X,Y$ are sequentially homeomorphic topological
spaces, then
$\min\{\mathrm{cs}^*_\chi(X),\aleph_1\}=
\min\{\mathrm{cs}_\chi(X),\aleph_1\}=\min\{\mathrm{cs}_\chi(Y),\aleph_1\}=
\min\{\mathrm{cs}^*_\chi(Y),\aleph_1\}$, and
$\min\{\mathrm{sb}_\chi(Y),\aleph_1\}=\min\{\mathrm{sb}_\chi(X),\aleph_1\}$.
\item $\min\{\mathrm{sb}_\chi(X),\aleph_1\}=\min\{\mathrm{sb}_\chi(\sigma
X),\aleph_1\}\le\mathrm{sb}_\chi(\sigma X)\ge\mathrm{sb}_\chi(X)$
and\newline $\mathrm{cs}_\chi(X)\le\mathrm{cs}_\chi(\sigma X)\ge
\min\{\mathrm{cs}_\chi(\sigma X),\aleph_1\}=
\min\{\mathrm{cs}_\chi(X),\aleph_1\}=\min\{\mathrm{cs}^*_\chi(X),\aleph_1\}=
\min\{\mathrm{cs}^*_\chi(\sigma
X),\aleph_1\}\le\mathrm{cs}^*_\chi(\sigma
X)\ge\mathrm{cs}^*_\chi(X)$ for any topological space $X$.
\item If $X=\prod_{i\in\mathcal{I}}X_i$ is the Tychonov product of topological spaces
$X_i$, $i\in\mathcal{I}$, then $\mathrm{cs}^*_\chi(X)\le
\sum_{i\in\mathcal{I}}\mathrm{cs}^*_\chi(X_i)$,
$\mathrm{cs}_\chi(X)\le
\sum_{i\in\mathcal{I}}\mathrm{cs}_\chi(X_i)$ and
$\mathrm{sb}_\chi(X)\le
\sum_{i\in\mathcal{I}}\mathrm{sb}_\chi(X_i)$.
\item If $X=\Box_{i\in\mathcal{I}}X_i$ is the box-product of topological spaces
$X_i$, $i\in\mathcal{I}$, then $\mathrm{cs}^*_\chi(X)\le
\sum_{i\in\mathcal{I}}\mathrm{cs}^*_\chi(X_i)$ and
$\mathrm{cs}_\chi(X)\le
\sum_{i\in\mathcal{I}}\mathrm{cs}_\chi(X_i)$.
\item If a topological space $X$ carries the inductive topology
with respect to a cover $\mathcal{C}$ of $X$, then
$\mathrm{cs}^*_\chi(X)\le
\mathrm{ord}(\mathcal{C})\cdot\sup_{C\in\mathcal{C}}\mathrm{cs}^*_\chi(C)$.
\item If a topological space $X$ carries the inductive topology
with respect to a point-countable cover $\mathcal{C}$ of $X$, then
$\mathrm{cs}_\chi(X)\le
\sup_{C\in\mathcal{C}}\mathrm{cs}_\chi(C)$.
\item If a topological space $X$ carries the inductive topology
with respect to a point-finite cover $\mathcal{C}$ of $X$, then
$\mathrm{sb}_\chi(X)\le
\sup_{C\in\mathcal{C}}\mathrm{sb}_\chi(C)$.
\end{enumerate}
\end{prop}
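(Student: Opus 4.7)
My plan is to verify each of the ten items in turn; most are routine applications of the network definitions, with items (3)--(5) requiring the most care through the sequential coreflexion $\sigma X$.

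For (1), given $X\subseteq Y$ and a $\mathrm{cs}^*$-network $\mathcal{N}$ at $x$ in $Y$, the trace $\{N\cap X:N\in\mathcal{N}\}$ is a $\mathrm{cs}^*$-network at $x$ in $X$, since $X$-neighborhoods and convergent $X$-sequences are inherited from $Y$; the $\mathrm{cs}$- and $\mathrm{sb}$-cases are identical. For (2), the pushforward $\{f(N):N\in\mathcal{N}\}$ works at $y=f(x)$: a sequence $y_n\to y$ lifts by sequential openness to $x_n\to x$ with $f(x_n)$ among the $y_n$, and applying $\mathrm{cs}^*$ in $X$ to the open set $f^{-1}(V)$ and $(x_n)$ yields $N\subseteq f^{-1}(V)$ meeting $(x_n)$ infinitely, so $f(N)\subseteq V$ meets $(y_n)$ infinitely. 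For (6) and (7), I take finite intersections of cylinders over the factor networks; the Tychonov bookkeeping gives the countable-sum bound, and the box case is analogous without the ``all but finitely many factors are the whole space'' restriction. For (8)--(10), I aggregate the networks at $x$ taken inside each cover element containing $x$; point-countability in (9) and point-finiteness in (10) are used to force a sequence converging to $x$ in the inductive topology to be eventually inside a single cover element, which is what the stronger $\mathrm{cs}$- and $\mathrm{sb}$-conditions demand.

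The substantive block is (3)--(5). The equality $\min\{\mathrm{cs}_\chi(X),\aleph_1\}=\min\{\mathrm{cs}^*_\chi(X),\aleph_1\}$ follows directly from Proposition~\ref{cs=cs*}(1), since $\aleph_1\le\mathfrak p$. The inequalities $\mathrm{cs}^*_\chi(X)\le\mathrm{cs}^*_\chi(\sigma X)$ and its $\mathrm{cs}$-, $\mathrm{sb}$-analogues are immediate: $\sigma X$ has more open sets than $X$, so a network of the appropriate type in $\sigma X$ is automatically one of the same type in $X$. For the reverse direction at the countable level, I use the following lemma, which is the technical heart: if $\mathcal{N}$ is a countable $\mathrm{cs}^*$-network at $x$ in $X$, closed under finite intersections and with $x\in N$ for every $N\in\mathcal{N}$, then for every sequentially open $U\ni x$ there exists $M\in\mathcal{N}$ with $M\subseteq U$. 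The lemma is a diagonal argument: enumerating $\mathcal{N}=\{N_k\}$ and setting $M_k=N_1\cap\cdots\cap N_k$, if no $M_k\subseteq U$ then the points $z_k\in M_k\setminus U$ form a sequence converging to $x$ by the nested-network property, contradicting that $U$ is sequentially open in $X$. Upgrading the lemma to a $\mathrm{cs}^*$-network condition in $\sigma X$ uses Proposition~\ref{cs=cs*}(1) to replace $\mathcal{N}$ by a countable $\mathrm{cs}$-network and then combines the lemma with the $\mathrm{cs}$-property: given a sequentially open $U\ni x$ and $s_n\to x$, the lemma provides $M\subseteq U$ and the $\mathrm{cs}$-property, applied to $(s_n)$ together with $M$, provides an element of the intersection-closed family that is contained in $U$ and traps a cofinite tail of $(s_n)$.

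Once the countable-level equality in (5) is established, item (3) follows by factoring $f:X\to Y$ as $X\to\sigma X\to Y$: the map $f:\sigma X\to Y$ is continuous because preimages of opens are sequentially open in $X$, hence open in $\sigma X$, and sequentially open because $X$ and $\sigma X$ share convergent sequences; then (2) applied to $f:\sigma X\to Y$ gives $\mathrm{cs}^*_\chi(Y)\le\mathrm{cs}^*_\chi(\sigma X)$, and (5) provides the passage from $\sigma X$ back to $X$ at the countable threshold. Item (4) is immediate from (3) applied to $f$ and $f^{-1}$, both sequentially continuous and sequentially open.

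The main obstacle is the upgrade step in (5), where one must simultaneously arrange $M\subseteq U$ (from the lemma) and $M$ meeting $(s_n)$ infinitely (from the $\mathrm{cs}$-property). The delicate coordination of these two conditions inside the intersection-closed countable family $\mathcal{N}$ is what requires the $\mathrm{cs}$-refinement through Proposition~\ref{cs=cs*}(1) and ultimately forces the $\aleph_1$-truncation in the statement.
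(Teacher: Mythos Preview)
The paper offers no proof of this proposition beyond the sentence ``an easy consequence of the corresponding definitions,'' so your write-up supplies considerably more than the original. Your overall architecture --- pushing items (3) and (4) through the sequential coreflexion via (2) and (5), and isolating the diagonal ``lemma'' that a countable intersection-closed network at $x$ in $X$ is automatically a network at $x$ in $\sigma X$ --- is sound and is exactly the content hidden behind the paper's one-line dismissal.

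Two points deserve correction. First, your claim for (9) and (10) that a convergent sequence is ``eventually inside a single cover element'' is false: take $X=\mathbb{R}$ with the cover $\{[n,n+2]:n\in\mathbb{Z}\}$ and a sequence oscillating between $[0,2]$ and $[1,3]$ while converging to $1$. What actually holds (and what the argument needs) is that the sequence is eventually inside a \emph{finite union} of cover elements through $x$: if not, a diagonal subsequence meets each such $C$ only finitely often, hence in a closed set, and under a Hausdorff-type hypothesis meets every other $C$ in a closed set as well, so the subsequence is closed without its limit --- a contradiction. With this corrected sub-claim, the construction uses finite unions $N_1\cup\cdots\cup N_m$ rather than a single $N$, and the cardinality bookkeeping still gives $\sup_{C}\mathrm{cs}_\chi(C)$ (resp.\ $\mathrm{sb}_\chi(C)$). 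Second, your ``upgrade step'' in (5) is correct but the phrasing obscures the mechanism: given the sequence $(s_n)$, pass first to the subfamily $\mathcal{N}'=\{N\in\mathcal{N}:\text{almost all }s_n\in N\}$, which is non-empty and intersection-closed because $\mathcal{N}$ is a $\mathrm{cs}$-network, and is itself a network at $x$ in $X$; then apply your diagonal lemma to $\mathcal{N}'$ and the sequentially open $U$ to extract $M\in\mathcal{N}'$ with $M\subseteq U$. This cleanly yields that the same countable $\mathcal{N}$ is a $\mathrm{cs}$-network at $x$ in $\sigma X$.
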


Since each first-countable space has countable
$\mathrm{cs}^*$-character, it is natural to consider the class of
topological spaces with countable $\mathrm{cs}^*$-character as a
class of generalized metric spaces. However this class contains
very non-metrizable spaces like $\beta\mathbb{N}$, the Stone-\v
Cech compactification of the discrete space of positive integers.
The reason is that $\beta\mathbb{N}$ contains no non-trivial
convergent sequence. To avoid such pathologies we shall restrict
ourselves by sequential spaces. Observe that a topological space
is sequential provided $X$ carries the inductive topology with
respect to a cover by sequential subspaces. In particular, each
$\mathcal{M}_\omega$-space is sequential and has countable
$\mathrm{cs}^*$-character. Our principal result states that for
topological groups the converse is also true. Under an {\em
$\mathcal{M}_\omega$-group} (resp. {\em
$\mathcal{M}\mathcal{K}_\omega$-group}) we understand a
topological group whose underlying topological space is an
$\mathcal{M}_\omega$-space (resp.
$\mathcal{M}\mathcal{K}_\omega$-space).

\begin{theo}\label{main} Each sequential topological group $G$
with countable $\mathrm{cs}^*$-character is an
$\mathcal{M}_\omega$-group. More precisely, either $G$ is
metrizable or else $G$ contains an open
$\mathcal{M}\mathcal{K}_\omega$-subgroup $H$ and is homeomorphic
to the product $H\times D$ for some discrete space $D$.
\end{theo}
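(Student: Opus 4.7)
\bigskip

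\noindent\emph{Proof proposal.}
Write $e$ for the identity of $G$, and fix a countable $\mathrm{cs}^*$-network $\mathcal N=\{N_k:k\in\omega\}$ at $e$, closed under finite intersections and under $N\mapsto N^{-1}$. By Birkhoff--Kakutani, if $G$ is first-countable at $e$ then $G$ is metrizable, so assume henceforth that $\chi(G,e)$ is uncountable. The plan is to proceed in three blocks: (i) deduce $\psi(G,e)\le\aleph_0$; (ii) manufacture an increasing sequence of compact metrizable sets whose union is an open subgroup $H$ carrying the $k_\omega$-topology; (iii) split off the coset space.

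For (i), I would argue by contradiction. If $\psi(G,e)>\aleph_0$, then for every countable family of open neighbourhoods of $e$ some non-identity element lies in the intersection. Using that $G$ is sequential and a topological group, and that $\mathcal N$ is closed under finite intersection, I would enumerate convergent sequences $(x^{(j)}_n)_{n}\to e$ and, via the $\mathrm{cs}^*$-property, select a single $N_{k(j)}\subset U_j$ capturing infinitely many terms. A diagonal argument, coupled with the group operation to translate networks to arbitrary points, should either produce a countable neighbourhood base at $e$ (contradicting the non-metrizable assumption via Birkhoff--Kakutani) or construct a sequence violating the $\mathrm{cs}^*$-property of $\mathcal N$. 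This yields a decreasing sequence $(V_n)$ of open symmetric neighbourhoods of $e$ with $\bigcap_n V_n=\{e\}$.

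For (ii), the key idea is to build, using $\mathcal N$ and the $(V_n)$, an increasing chain $K_0\subset K_1\subset\cdots$ of compact symmetric subsets of $G$ with $K_n\cdot K_n\subset K_{n+1}$ and satisfying: every sequence in $G$ convergent to $e$ has a tail inside some $K_n$. To obtain each $K_n$, I would collect convergent sequences together with their limits using the elements of $\mathcal N$ that sit inside $V_n$ (they behave like compact-like ``traps'' by the $\mathrm{cs}^*$-condition) and close up under the multiplication of finitely many previous $K_m$'s. Compactness of the resulting sets should be verified by a standard diagonal/Ramsey argument exploiting sequentiality and countable pseudo-character. Setting $H=\bigcup_n K_n$, the set $H$ is a subgroup by construction; it is sequentially open since each sequence converging to $e$ eventually enters some $K_n\subset H$, and sequential openness equals openness in the sequential space $G$. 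To see that $H$ carries the inductive topology of $(K_n)$, one checks that every sequentially closed subset of $H$ meets each $K_n$ in a closed set, which suffices by sequentiality of $H$. Finally, each compact set in a sequential space with countable $\mathrm{cs}^*$-character is first-countable (any compact subspace is Fr\'echet--Urysohn, and countable $\mathrm{cs}^*$-character forces countable character there), hence compact metrizable, so $H$ is an $\mathcal M\mathcal K_\omega$-group.

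For (iii), since $H$ is an open subgroup the quotient $G/H$ is discrete; choosing a transversal $D\subset G$ of right cosets gives a homeomorphism $H\times D\to G$, $(h,d)\mapsto hd$. The predictable main obstacle is block (ii): closing the $K_n$ under the group operation while preserving both compactness and the capturing property for \emph{every} sequence tending to $e$ requires a careful bookkeeping argument, and this is precisely where the countable $\mathrm{cs}^*$-character and the countable pseudo-character from (i) must be used in tandem.
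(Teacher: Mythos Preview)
Your overall architecture (metrizable dichotomy, build an open $\mathcal{MK}_\omega$-subgroup, split off cosets) matches the paper's, and block~(iii) is fine. But blocks~(i) and~(ii) contain genuine gaps that are exactly where the real work lies.

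First, in block~(ii) you assert that ``any compact subspace is Fr\'echet--Urysohn, and countable $\mathrm{cs}^*$-character forces countable character there.'' The second half is correct (Fr\'echet--Urysohn countably compact spaces with countable $\mathrm{cs}^*$-character are first-countable, via $\alpha_4$), but the first half is unjustified: a compact subset of a sequential space is sequential, not Fr\'echet--Urysohn in general. The paper does \emph{not} go through Fr\'echet--Urysohn at all. Instead it proves directly that every countably compact $K\subset G$ is first-countable by a two-step bootstrap using the group operation: $K^{-1}K$ is sequentially compact hence $\alpha_7$, so a sequence-tree argument (Lemma~\ref{l3}) upgrades the countable $\mathrm{cs}$-network on $(K^{-1}K)^{-1}(K^{-1}K)$ to a countable $\mathrm{sb}$-network on $K^{-1}K$; then a second sequence-tree argument (Lemma~\ref{l2}) upgrades that to first-countability of $K$. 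These lemmas are the heart of the proof and genuinely exploit that the ambient space is a \emph{group}.

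Second, your mechanism for obtaining compact $K_n$ is a placeholder (``standard diagonal/Ramsey argument''). The paper's route is specific and not obvious: it shows (Lemma~\ref{l4}) that if $G$ contained a closed copy of the space $\mathbb{L}$ then $G$ would be $\alpha_7$, whence (via Lemmas~\ref{l2}--\ref{l3}) metrizable; so in the non-metrizable case $G$ contains no closed copy of $\mathbb{L}$. This single obstruction is precisely what forces a countable $\mathrm{cs}$-network at $e$ to contain a sub-$\mathrm{cs}$-network consisting of countably compact sets (Lemma~\ref{l5}). Only \emph{after} this does countable pseudocharacter fall out, as a consequence of $H$ being a countable union of first-countable sets---so your block~(i), which tries to get $\psi(G,e)\le\aleph_0$ first by an unspecified diagonal argument, is both vague and in the wrong order relative to the actual dependencies.
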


For $\mathcal{M}_\omega$-groups the second part of this theorem
was proven in \cite{Ba1}. Theorem~\ref{main} has many interesting
corollaries.

At first we show that for sequential topological groups with
countable $\mathrm{cs}^*$-character many important cardinal
invariants are countable, coincide or take some fixed values. Let
us remind some definitions, see \cite{En1}. For a topological
space $X$ recall that
\begin{itemize}
\item the {\em pseudocharacter} $\psi(X)$ is the smallest cardinal $\kappa$
such that each one-point set $\{x\}\subset X$ can be written as
the intersection $\{x\}=\cap\mathcal{U}$ of some family
$\mathcal{U}$ of open subsets of $X$ with
$|\mathcal{U}|\le\kappa$;
\item the {\em cellularity} $c(X)$ is the smallest cardinal
$\kappa$ such that $X$ contains no family $\mathcal{U}$ of size
$|\mathcal{U}|>\kappa$ consisting of non-empty pairwise disjoint
open subsets;
\item the {\em Lindel\"of number} $l(X)$ is the smallest cardinal
$\kappa$ such that each open cover of $X$ contains a subcover of
size $\le\kappa$;
\item the {\em density} $d(X)$ is the smallest size of a dense
subset of $X$;
\item the {\em tightness} $t(X)$ is the smallest cardinal $\kappa$
such that for any subset $A\subset X$ and a point $a\in\bar A$
from its closure there is a subset $B\subset A$ of size
$|B|\le\kappa$ with $a\in\bar B$;
\item the {\em extent} $e(X)$ is the smallest cardinal $\kappa$
such that $X$ contains no closed discrete subspace of size
$>\kappa$;
\item the {\em compact covering number} ${kc}(X)$ is the smallest
size of a cover of $X$ by compact subsets;
\item the {\em weight} $w(X)$ is the smallest size of a base of
the topology of $X$;
\item the {\em network weight} $nw(X)$ is the smallest size
$|\mathcal N|$ of a topological network for $X$ (a family
$\mathcal N$ of subsets of $X$ is a {\em topological network} if
for any open set $U\subset X$ and any point $x\in U$ there is
$N\in\mathcal N$ with $x\in N\subset U$);
\item the {\em $k$-network weight}  $knw(X)$ is the smallest size
$|\mathcal N|$ of a $k$-network for $X$ (a family $\mathcal N$ of
subsets of $X$ is a {\em $k$-network} if for any open set
$U\subset X$ and any compact subset $K\subset U$ there is a finite
subfamily $\mathcal M\subset\mathcal N$ with $K\subset\cup\mathcal
M\subset U$).
\end{itemize}

For each topological space $X$ these cardinal invariants relate as
follows: $$\max\{c(X),l(X),e(X)\}\le nw(X)\le knw(X)\le w(X).$$
For metrizable spaces all of them are equal, see
\cite[4.1.15]{En1}.

In the class of $k$-spaces there is another cardinal invariant,
the $k$-ness introduced by E.~van Douwen, see \cite[\S8]{vD}. We
remind that a topological space $X$ is called a {\em $k$-space} if
it carries the inductive topology with respect to the cover of $X$
by all compact subsets. It is clear that each sequential space is
a $k$-space. The {\em $k$-ness} $k(X)$ of a $k$-space is the
smallest size $|\mathcal{K}|$ of a cover $\mathcal{K}$ of $X$ by
compact subsets such that $X$ carries the inductive topology with
respect to the cover $\mathcal{K}$. It is interesting to notice
that $k(\mathbb{N}^\omega)=\mathfrak d$ while $k(\mathbb
Q)=\mathfrak b$, see \cite{vD}. Proposition~\ref{prop3}(8) implies
that $\mathrm{cs}_\chi^*(X)\le k(X)\cdot\psi(X)\ge kc(X)$ for each
$k$-space $X$. Observe also that a topological space $X$ is a
$k_\omega$-space if and only if $X$ is a $k$-space with
$k(X)\le\aleph_0$.

Besides  cardinal invariants we shall consider an ordinal
invariant, called the sequential order. Under {\em the sequential
closure} $A^{(1)}$ of a subset $A$ of a topological space $X$ we
understand the set of all limit point of sequences $(a_n)\subset
A$, convergent in $X$. Given an ordinal $\alpha$ define the
$\alpha$-th sequential closure $A^{(\alpha)}$ of $A$ by
transfinite induction:
$A^{(\alpha)}=\bigcup_{\beta<\alpha}(A^{(\beta)})^{(1)}$. Under
the {\em sequential order} $\mathrm{so}(X)$ of a topological space
$X$ we understand the smallest ordinal $\alpha$ such that
$A^{(\alpha+1)}=A^{(\alpha)}$ for any subset $A\subset X$. Observe
that a topological space $X$ is Fr\'echet-Urysohn if and only if
$\mathrm{so}(X)\le 1$; $X$ is sequential if and only if
$\mathrm{cl}_X(A)=A^{(\mathrm{so}(X))}$ for any subset $A\subset
X$.

 Besides purely topological invariants we shall also consider
a cardinal invariant, specific for topological groups. For a
topological group $G$ let $ib(G)$, the {\em boundedness index} of
$G$ be the smallest cardinal $\kappa$ such that for any nonempty
open set $U\subset G$ there is a subset $F\subset G$ of size
$|F|\le\kappa$ such that $G=F\cdot U$. It is known that
${ib}(G)\le\min\{c(G),l(G),e(G)\}$ and $w(G)=ib(G)\cdot\chi(G)$
for each topological group, see \cite{Tk}.

\begin{theo}\label{cardinal} Each sequential
topological group $G$ with countable $\mathrm{cs}^*$-character has
the following properties: $\psi(G)\le\aleph_0$,
$\mathrm{sb}_\chi(G)=\chi(G)\in\{1,\aleph_0,\mathfrak d\}$,
${ib}(G)=c(G)=d(G)=l(G)=e(G)={nw}(G)={knw}(G)$, and
$\mathrm{so}(G)\in\{1,\omega_1\}$.
\end{theo}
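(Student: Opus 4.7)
My plan is to apply Theorem~\ref{main} to split the argument into two cases: either $G$ is metrizable, or $G$ is homeomorphic to $H\times D$ for an open $\mathcal{M}\mathcal{K}_\omega$-subgroup $H\le G$ and a discrete space $D$. The invariants listed in the theorem split naturally into \emph{local} ones ($\chi$, $\mathrm{sb}_\chi$, $\psi$, $\mathrm{so}$), which by homogeneity and the product structure coincide with the corresponding invariants of $H$, and \emph{global} ones ($ib$, $c$, $d$, $l$, $e$, $nw$, $knw$), which become the maximum of $|D|$ and the corresponding invariant of $H$. So I would first compute every invariant on $H$, then transfer to $G$.

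The metrizable case is almost immediate. Here $\chi(G)=\mathrm{sb}_\chi(G)$ by Proposition~\ref{prop1}(2), and this common value is $1$ if $G$ is discrete and $\aleph_0$ otherwise; $\psi(G)\le\chi(G)\le\aleph_0$, metric groups are Fr\'echet--Urysohn so $\mathrm{so}(G)\le 1$, and the equalities among the global invariants follow from classical facts for metric topological groups ($w=nw=knw$ for metric spaces, $w=ib\cdot\chi$ for topological groups, and cellularity, Lindel\"of number, extent and density all coincide in the metric setting).

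In the non-metrizable case, fix an increasing exhaustion $H=\bigcup_n K_n$ by compact metrizable subsets witnessing the $\mathcal{M}\mathcal{K}_\omega$-structure. Each $K_n$ has a countable $k$-network, and their union is a countable $k$-network for $H$, giving $nw(H)=knw(H)=\aleph_0$ and forcing $ib(H)=c(H)=d(H)=l(H)=e(H)=\aleph_0$. Countable pseudocharacter $\psi(H)\le\aleph_0$ follows since $\{e\}$ is $G_\delta$ in each $K_n$ and hence in the inductive limit. Passing to $G=H\times D$, each global invariant becomes $\max(|D|,\aleph_0)$, so all of them coincide, while the local invariants of $G$ are copied from $H$.

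The main obstacle is computing $\chi(H)=\mathrm{sb}_\chi(H)=\mathfrak{d}$ and $\mathrm{so}(H)=\omega_1$ for a non-metrizable $\mathcal{M}\mathcal{K}_\omega$-group $H$. I would identify the neighborhood filter of $e\in H$ with the poset $\mathbb{N}^\omega$: after fixing countable neighborhood bases $\{V_{n,k}\}_{k\in\omega}$ of $e$ in each $K_n$, every open neighborhood of $e$ in $H$ is determined by a function $f\in\mathbb{N}^\omega$ via the sets $V_{n,f(n)}\subset U\cap K_n$, and conversely; since cofinality in $\mathbb{N}^\omega$ equals $\mathfrak{d}$, both bounds on $\chi(H)$ follow. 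The same poset analysis applied to sequential barriers, combined with a diagonalization producing a sequence convergent to $e$ that escapes any family of fewer than $\mathfrak{d}$ barriers, yields $\mathrm{sb}_\chi(H)=\mathfrak{d}$. For the sequential order, the non-metrizable sequential group $H$ cannot be Fr\'echet--Urysohn (a property specific to sequential topological groups that would force first-countability, hence metrizability), so $\mathrm{so}(H)\ge 2$; the upper bound $\mathrm{so}(H)\le\omega_1$ follows from countable tightness (inherited from $nw(H)=\aleph_0$), and equality is attained by embedding an Arens-type construction realizing arbitrary countable ordinals of sequential closure into the $k_\omega$-structure of $H$. This sequential-order computation is the most delicate step; the remaining assertions are bookkeeping once Theorem~\ref{main} is in hand.
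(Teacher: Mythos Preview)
Your overall strategy---reduce to Theorem~\ref{main}, treat the metrizable case separately, and in the non-metrizable case read everything off the decomposition $G\cong H\times D$ with $H$ an $\mathcal{MK}_\omega$-group---is exactly what the paper does, and your handling of $\psi$, the global invariants, and the metrizable case is fine.

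The two substantive computations, however, are carried out quite differently in the paper, and in one case your sketch has a real gap.

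\textbf{The equality $\mathrm{sb}_\chi(H)=\chi(H)=\mathfrak d$.} The paper does not analyze the neighborhood filter of $e$ as a poset. For the lower bound it argues via Lemmas~\ref{l2} and~\ref{l3}: since $H$ is non-metrizable, it cannot be an $\alpha_7$-space, hence contains a closed copy of the sequential fan $S_\omega$, and then $\mathfrak d=\mathrm{sb}_\chi(S_\omega)\le\mathrm{sb}_\chi(H)$. For the upper bound it cites Sakai's theorem that every $\mathcal{MK}_\omega$-space embeds as a closed subspace of $\mathbb{R}^\infty\times Q$, whence $\chi(H)\le\chi(\mathbb{R}^\infty\times Q)=\mathfrak d$. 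Your direct $\mathbb{N}^\omega$ argument for $\chi(H)\le\mathfrak d$ is workable, but your lower bound ``diagonalization producing a sequence escaping fewer than $\mathfrak d$ barriers'' tacitly presupposes a fan-like configuration of sequences at $e$; without invoking Lemmas~\ref{l2}--\ref{l3} (or something equivalent) you have not explained why such a configuration exists in an arbitrary non-metrizable $\mathcal{MK}_\omega$-group.

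\textbf{The equality $\mathrm{so}(G)=\omega_1$.} Here the paper simply observes that $G$ has a point-countable $k$-network and quotes Shibakov's theorem \cite{Shi}: a sequential topological group with point-countable $k$-network and $\mathrm{so}<\omega_1$ is metrizable. Your proposal instead is to embed, for every countable ordinal $\alpha$, an Arens-type configuration of sequential order $\alpha$ into $H$. This is essentially reproving Shibakov's result by hand, and you give no mechanism for producing such embeddings in a general non-metrizable $\mathcal{MK}_\omega$-group; knowing only that $H$ is not Fr\'echet--Urysohn gives $\mathrm{so}(H)\ge 2$, not $\mathrm{so}(H)=\omega_1$. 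As written this step is a genuine gap, and the clean fix is precisely to cite Shibakov.
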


We shall derive from Theorems~\ref{main} and \ref{cardinal} an
unexpected metrization theorem for topological groups. But first
we need to remind the definitions of some of $\alpha_i$-spaces,
$i=1,\dots,6$ introduced by A.V.~Arkhangelski in \cite{Ar2},
\cite{Ar3}. We also define a wider class of $\alpha_7$-spaces.

A topological space $X$ is called
\begin{itemize}
\item an {\em $\alpha_1$-space} if for any sequences
$S_n\subset X$, $n\in\omega$, convergent to a point $x\in X$ there
is a sequence $S\subset X$ convergent to $x$ and such that
$S_n\setminus S$ is finite for all $n$;
\item an {\em $\alpha_4$-space} if for any sequences
$S_n\subset X$, $n\in\omega$, convergent to a point $x\in X$ there
is a sequence $S\subset X$ convergent to $x$ and such that
$S_n\cap S\ne\emptyset$ for infinitely many sequences $S_n$;
\item an {\em $\alpha_7$-space} if for any sequences
$S_n\subset X$, $n\in\omega$, convergent to a point $x\in X$ there
is a sequence $S\subset X$ convergent to some point $y$ of $X$ and
such that $S_n\cap S\ne\emptyset$ for infinitely many sequences
$S_n$;
\end{itemize}

Under a sequence converging to a point $x$ of a topological space
$X$ we understand any countable infinite subset $S$ of $X$ such
that $S\setminus U$ if finite for any neighborhood $U$ of $x$.
Each $\alpha_1$-space is $\alpha_4$ and each $\alpha_4$-space is
$\alpha_7$. Quite often $\alpha_7$-spaces are $\alpha_4$, see
Lemma~\ref{alpha4}. Observe also that each sequentially compact
space is $\alpha_7$. It can be shown that a topological space $X$
is an $\alpha_7$-space if and only if it contains no closed copy
of the sequential fan $S_\omega$ in its sequential coreflexion
$\sigma X$.  If $X$ is an $\alpha_4$-space, then $\sigma X$
contains no topological copy of $S_\omega$.

We remind that a topological group $G$ is {\em Weil complete} if
it is complete in its left (equivalently, right) uniformity.
According to \cite[4.1.6]{PZ}, each $k_\omega$-group is Weil
complete. The following metrization theorem can be easily derived
from Theorems~\ref{main}, \ref{cardinal} and elementary properties
of $\mathcal{MK}_\omega$-groups.

\begin{theo}\label{metr} A sequential topological group $G$ with countable
$\mathrm{cs}^*$-character is metrizable if one of the following
conditions is satisfied:
\begin{enumerate}
\item $\mathrm{so}(G)<\omega_1$;
\item $\mathrm{sb}_\chi(G)<\mathfrak d$;
\item ${ib}(G)< k(G)$;
\item $G$ is Fr\'echet-Urysohn;
\item $G$ is an $\alpha_7$-space;
\item $G$ contains no closed copy of $S_\omega$ or $S_2$;
\item $G$ is not Weil complete;
\item $G$ is Baire;
\item ${ib}(G)<|G|<2^{\aleph_0}$.
\end{enumerate}
\end{theo}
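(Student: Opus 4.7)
I will argue by contraposition: assume $G$ is not metrizable, and show that none of (1)--(9) can hold. By Theorem~\ref{main}, $G$ is homeomorphic to $H\times D$ with $D$ discrete and $H$ an open $\mathcal{MK}_\omega$-subgroup; since $G$ is non-metrizable so is~$H$. The first step is to record five elementary properties of such an $H$, writing $H=\bigcup_n K_n$ with each $K_n$ compact metrizable: (a)~$H$ is not first-countable (by Birkhoff--Kakutani); (b)~$H$ is not locally compact, for otherwise a compact metrizable neighborhood of~$e$ would furnish a countable local base, contradicting~(a); (c)~by~(b) each $K_n$ has empty interior and is nowhere dense, so $H=\bigcup_n K_n$ is meager in itself; (d)~$H$ is Weil complete as a $k_\omega$-group, by~\cite[4.1.6]{PZ}; (e)~$H$ contains closed topological copies of both $S_2$ and~$S_\omega$. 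Theorem~\ref{cardinal} then supplies $\chi(G)=\mathrm{sb}_\chi(G)=\mathfrak{d}$ and $\mathrm{so}(G)=\omega_1$, while a short count using $ib(H)\le\aleph_0$ and $k(H)\le\aleph_0$ yields $ib(G)=k(G)=\max(\aleph_0,|D|)$.

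Each of (1)--(8) now collapses immediately. Condition~(1) contradicts $\mathrm{so}(G)=\omega_1$; (2) combined with $\mathrm{sb}_\chi(G)\in\{1,\aleph_0,\mathfrak{d}\}$ from Theorem~\ref{cardinal} forces $\chi(G)\le\aleph_0$ and hence metrizability via Birkhoff--Kakutani; (3) contradicts $ib(G)=k(G)$; (4) contradicts the closed $S_2\subset G$ because $\mathrm{so}(S_2)=2$; for (5), the remark that $X$ is an $\alpha_7$-space iff $\sigma X$ contains no closed $S_\omega$, combined with $\sigma G=G$ (since $G$ is sequential) and the closed $S_\omega\subset G$, provides the contradiction; (6) is immediate from~(e); (7) fails because the product of the Weil complete groups $H$ and $D$ is Weil complete; (8) fails because by~(c) the clopen subspace $H\subset G$ is meager in~$G$, so $G$ cannot be Baire.

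Condition~(9) is the most delicate. The crux is the cardinality dichotomy $|H|\in\{\aleph_0,2^{\aleph_0}\}$: each compact metric $K_n$ is either countable or has cardinality~$2^{\aleph_0}$, so the countable union $H=\bigcup_n K_n$ inherits the same alternative. A short case analysis on whether $D$ is finite or infinite then shows that the joint hypothesis $ib(G)<|G|<2^{\aleph_0}$ forces $|H|$ to lie strictly between $\aleph_0$ and $2^{\aleph_0}$, contradicting the dichotomy.

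The hardest step of the plan is item~(e), namely that every non-metrizable $\mathcal{MK}_\omega$-group contains closed copies of both $S_\omega$ and~$S_2$. My approach is to exploit the $k_\omega$-presentation $H=\bigcup_n K_n$ together with the failure of first-countability at~$e$ to extract countably many convergent sequences whose diagonal behavior realises the fan (yielding a closed~$S_\omega$), and then to iterate the construction one further level (using the non-local-compactness of~$H$ to furnish a non-Fr\'echet--Urysohn sequence-of-sequences pattern) to produce the Arens configuration. Checking that the resulting subspaces are genuinely \emph{closed} in $H$, rather than merely sequentially closed, is where the argument is least routine, and this is where I expect the main technical work of the proof to lie.
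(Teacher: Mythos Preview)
Your contrapositive strategy and the reduction to properties of the open $\mathcal{MK}_\omega$-subgroup $H$ are exactly what the paper intends (its entire proof is the remark that the theorem ``can be easily derived from Theorems~\ref{main}, \ref{cardinal} and elementary properties of $\mathcal{MK}_\omega$-groups''). Two points, however, deserve correction.

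First, your ``hardest step''~(e) is in fact immediate from material already in the paper, and you have the logic around item~(5) reversed. The proof of Theorem~\ref{cardinal} shows directly, via Lemmas~\ref{l2} and~\ref{l3}, that the non-metrizable group $H$ (hence $G$) is \emph{not} an $\alpha_7$-space; the characterization you quote (a sequential space is $\alpha_7$ iff it contains no closed $S_\omega$) then yields a closed copy of $S_\omega$ in $H$ as a \emph{consequence}, not a prerequisite. For the closed copy of $S_2$: since $\mathrm{so}(H)=\omega_1>1$ by Theorem~\ref{cardinal}, $H$ is not Fr\'echet--Urysohn, and Lin's Lemma~2.5 (invoked in the proof of Proposition~\ref{chi}) says that every sequential non-Fr\'echet--Urysohn space contains a closed copy of $S_2$. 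No hands-on construction is needed, and the issue you flag about checking closedness simply does not arise. Indeed, you do not even need the $S_2$ half of~(e): condition~(4) already fails because $\mathrm{so}(G)=\omega_1>1$, and condition~(6) fails from the closed $S_\omega$ alone.

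Second, your justification of~(7) contains a slip: the identification $G\cong H\times D$ from Theorem~\ref{main} is only a homeomorphism, not an isomorphism of topological groups, so you cannot appeal to Weil completeness of the product group $H\times D$. Argue instead that $H$ is an open (hence closed) Weil complete subgroup of $G$ with discrete quotient; any left-Cauchy net in $G$ is eventually trapped in a single coset $gH$, and completeness of $H$ then supplies the limit.
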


According to Theorem~\ref{main}, each sequential topological group
with countable $\mathrm{cs}^*$-character is an
$\mathcal{M}_\omega$-group. The first author has proved in
\cite{Ba2} that the topological structure of a non-metrizable
punctiform $\mathcal{M}_\omega$-group is completely determined by
its density and the compact scatteredness rank.

Recall that a topological space $X$ is {\em punctiform} if $X$
contains no compact connected subspace containing more than one
point, see \cite[1.4.3]{En2}. In particular, each zero-dimensional
space is punctiform.

Next, we remind the definition of the scatteredness height. Given
a topological space $X$  let $X_{(1)}\subset X$ denote the set of
all non-isolated points of $X$. For each ordinal $\alpha$ define
the $\alpha$-th derived set $X_{(\alpha)}$ of $X$ by transfinite
induction:
$X_{(\alpha)}=\bigcap_{\beta<\alpha}(X_{(\beta)})_{(1)}$. Under
the {\em scatteredness height} $\mathrm{sch}(X)$ of $X$ we
understand the smallest ordinal $\alpha$ such that
$X_{(\alpha+1)}=X_{(\alpha)}$. A topological space $X$ is {\em
scattered} if $X_{(\alpha)}=\emptyset$ for some ordinal $\alpha$.
Under the {\em compact scatteredness rank}\/  of a topological
space $X$ we understand the ordinal
$\mathrm{scr}(X)=\sup\{\mathrm{sch}(K): K$ is a scattered compact
subspace of $X\}$.

\begin{theo}\label{zero} Two non-metrizable sequential punctiform
topological groups $G,H$ with countable $\mathrm{cs}^*$-character
are homeomorphic if and only if $d(G)=d(H)$ and
$\mathrm{scr}(G)=\mathrm{scr}(H)$.
\end{theo}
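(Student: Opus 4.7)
The plan is to reduce Theorem~\ref{zero} to the topological classification of non-metrizable punctiform $\mathcal{M}_\omega$-groups established by the first author in \cite{Ba2}, which asserts that the homeomorphism type of such a group is completely determined by its density and its compact scatteredness rank. The role of Theorem~\ref{main} is to supply the $\mathcal{M}_\omega$-group structure that is assumed in \cite{Ba2}.

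The ``only if'' direction is essentially immediate: the density $d(X)$ and the compact scatteredness rank $\mathrm{scr}(X)=\sup\{\mathrm{sch}(K):K\subset X\text{ scattered compact}\}$ are defined purely in terms of the topology of $X$, so both are preserved under homeomorphisms. Hence any homeomorphism $G\to H$ automatically gives $d(G)=d(H)$ and $\mathrm{scr}(G)=\mathrm{scr}(H)$.

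For the ``if'' direction, I would argue as follows. Since $G$ and $H$ are non-metrizable sequential topological groups with countable $\mathrm{cs}^*$-character, Theorem~\ref{main} applies to each of them and produces open $\mathcal{MK}_\omega$-subgroups $G_0\subset G$ and $H_0\subset H$ together with homeomorphisms $G\cong G_0\times D_G$ and $H\cong H_0\times D_H$ onto products with discrete factors. In particular $G$ and $H$ are non-metrizable $\mathcal{M}_\omega$-groups, they remain punctiform by hypothesis, and the assumed equalities $d(G)=d(H)$ and $\mathrm{scr}(G)=\mathrm{scr}(H)$ carry over unchanged. Thus the hypotheses of the classification theorem of \cite{Ba2} are satisfied, and it delivers the desired homeomorphism $G\cong H$.

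There is essentially no combinatorial obstacle intrinsic to the present paper; the whole difficulty is outsourced to \cite{Ba2}. The only points worth verifying carefully in the write-up are (i) that the hypotheses of \cite{Ba2} really are those produced by Theorem~\ref{main}, in particular that ``$\mathcal{M}_\omega$-group'' in \cite{Ba2} matches the definition used here (an inductive topology with respect to a countable cover by closed metrizable subspaces, so automatically a sequential space), and (ii) that punctiformness and non-metrizability of $G$ pass from the original group to the relevant subgroup and product factors, so that the classification can be applied in exactly the form stated. Both are routine and require no new argument beyond the definitions.
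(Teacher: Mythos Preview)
Your proposal is correct and matches the paper's own argument exactly: the paper derives Theorem~\ref{zero} in one sentence from Theorem~\ref{main} (which makes $G$ and $H$ non-metrizable punctiform $\mathcal{M}_\omega$-groups) together with the Main Theorem of \cite{Ba2}. Your additional remarks on the trivial ``only if'' direction and on checking that the hypotheses line up are accurate but go slightly beyond what the paper bothers to spell out.
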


This theorem follows from Theorem~\ref{main} and ``Main  Theorem"
of \cite{Ba2} asserting that two non-metrizable punctiform
$\mathcal{M}_\omega$-groups $G$, $H$ are homeomorphic if and only
if $d(G)=d(H)$ and $\mathrm{scr}(G)=\mathrm{scr}(H)$. For
countable $k_\omega$-groups this fact was proven by E.Zelenyuk
\cite{Ze}.

The topological classification of non-metrizable sequential
locally convex spaces with countable $\mathrm{cs}^*$-character is
even more simple. Any such a space is homeomorphic either to
$\mathbb{R}^\infty$ or to $\mathbb{R}^\infty\times Q$ where
$Q=[0,1]^\omega$ is the Hilbert cube and $\mathbb{R}^\infty$ is a
linear space of countable algebraic dimension, carrying the
strongest locally convex topology. It is well-known that this
topology is inductive with respect to the cover of
$\mathbb{R}^\infty$ by finite-dimensional linear subspaces. The
topological characterization of the spaces $\mathbb{R}^\infty$ and
$\mathbb{R}^\infty\times Q$ was given in \cite{Sa}. In \cite{Ba3}
it was shown that each infinite-dimensional locally convex
$\mathcal{M}\mathcal{K}_\omega$-space is homeomorphic to
$\mathbb{R}^\infty$ or $\mathbb{R}^\infty\times Q$. This result
together with Theorem~\ref{main} implies the following
classification

\begin{cor} Each non-metrizable sequential locally convex
space with countable $\mathrm{cs}^*$-cha\-rac\-ter is homeomorphic
to $\mathbb{R}^\infty$ or $\mathbb{R}^\infty\times Q$.
\end{cor}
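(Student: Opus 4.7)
The strategy is to combine Theorem~\ref{main} with the topological characterization from \cite{Ba3} that the author quoted. Let $L$ be a non-metrizable sequential locally convex space with countable $\mathrm{cs}^*$-character. Viewing $L$ as a topological group under addition, Theorem~\ref{main} immediately applies and, because $L$ is non-metrizable, yields an open $\mathcal{MK}_\omega$-subgroup $H\subset L$ together with a homeomorphism $L\cong H\times D$ for some discrete space $D$.

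The next step is to eliminate the factor $D$ by a connectedness argument. A locally convex space is convex, hence path-connected, hence connected. The cosets of the open subgroup $H$ form a partition of $L$ into open subsets, so connectedness forces there to be only one coset, i.e.\ $L=H$. Consequently $L$ itself is an $\mathcal{MK}_\omega$-space (carrying the inductive topology with respect to a countable cover by metrizable compacta).

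Finally, $L$ must be infinite-dimensional: every finite-dimensional Hausdorff locally convex space is isomorphic to some $\mathbb{R}^n$ and hence metrizable, contradicting our assumption. Thus $L$ is an infinite-dimensional locally convex $\mathcal{MK}_\omega$-space, and the theorem of \cite{Ba3} that the author recalled just before the corollary applies to conclude that $L$ is homeomorphic either to $\mathbb{R}^\infty$ or to $\mathbb{R}^\infty\times Q$.

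The only substantive step is the passage from the group-theoretic decomposition $L\cong H\times D$ to the equality $L=H$, and here the obstacle is entirely conceptual rather than technical: one has to notice that the open-subgroup structure produced by Theorem~\ref{main} is forced to be trivial as soon as the ambient space is connected. Once this is observed, everything else is a direct citation.
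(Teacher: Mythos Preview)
Your argument is correct and follows exactly the route the paper intends: combine Theorem~\ref{main} with the classification from \cite{Ba3}, using connectedness of the locally convex space to force $D$ to be a singleton and non-metrizability to force infinite dimension. The paper states the corollary as an immediate consequence without writing out these two bridging observations, so your version simply makes explicit what the authors left to the reader.
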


As we saw in Theorem~\ref{cardinal}, each sequential topological
group with countable $\mathrm{cs}^*$-character has countable
pseudocharacter. The proof of this result is based on the fact
that compact subsets of sequential topological groups with
countable $\mathrm{cs}^*$-character are first countable. This
naturally leads to a conjecture that compact spaces  with
countable $\mathrm{cs}^*$-character are first countable.
Surprisingly, but this conjecture is false: assuming the Continuum
Hypothesis N.~Yakovlev \cite{Ya} has constructed a scattered
sequential compactum which has countable $\mathrm{sb}$-character
but fails to be first countable. In \cite{Nyi2} P.Nyikos pointed
out that the Yakovlev construction still can be carried under the
assumption $\mathfrak b=\mathfrak c$. More precisely, we have

\begin{prop}\label{yakovlev} Under $\mathfrak
b=\mathfrak c$ there is a regular locally compact locally
countable space $Y$ whose one-point compactification $\alpha Y$ is
sequential and satisfies $\aleph_0=\mathrm{sb}_\chi(\alpha
Y)<\psi(\alpha Y)=\mathfrak c$.
\end{prop}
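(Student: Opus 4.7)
The plan is to reproduce the scattered-space construction of Yakovlev \cite{Ya}, adapted to the hypothesis $\mathfrak b = \mathfrak c$ in the manner of Nyikos \cite{Nyi2}. First I would build $Y$ by transfinite recursion as a locally countable, locally compact Hausdorff scattered space (hence automatically regular). Starting from a countable discrete set $L_0$, at each stage $\alpha$ one adjoins a countable level $L_\alpha$ of new points, and each $x \in L_\alpha$ is equipped with a single sequence $s_x \subset \bigcup_{\beta<\alpha} L_\beta$ converging to $x$; the neighborhood base at $x$ is then $\{x\}$ together with the tails of $s_x$. A routine induction on rank shows that these basic neighborhoods are compact, so $Y$ is locally compact and locally countable. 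In parallel with the recursion, I would fix once and for all a countable family $\{C_n : n \in \omega\}$ of subsets of $Y$ whose complements $N_n = \{\infty\} \cup (Y \setminus C_n)$ are the intended $\mathrm{sb}$-network at $\infty$ in $\alpha Y$.

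The sequences $s_x$ must then be chosen so that (i) every compact $K \subset Y$ is contained in some $C_n$, so that $\{N_n : n \in \omega\}$ is a network at $\infty$, and (ii) every sequence in $Y$ escaping all compact subsets of $Y$ (equivalently, converging to $\infty$ in $\alpha Y$) is eventually outside each $C_n$, so that each $N_n$ is a sequential barrier at $\infty$. The hypothesis $\mathfrak b = \mathfrak c$ supplies an unbounded family in $(\omega^\omega, \le^*)$ of size $\mathfrak c$, which is used at each recursion stage to diagonalize against all previously built potentially-divergent sequences; this is what allows the recursion to be carried out through $\mathfrak c$ many stages and yields $|Y| = \mathfrak c$. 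Since every compact subset of a locally countable Hausdorff space is countable, no family of fewer than $\mathfrak c$ compact subsets can cover $Y$, which gives $\psi(\alpha Y) = \mathfrak c$ by identifying the pseudocharacter at $\infty$ with the compact covering number of $Y$.

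The hard part will be arranging (i) and (ii) simultaneously with the verification that $\alpha Y$ is sequential, i.e., that every sequentially closed subset of $\alpha Y$ is closed. This requires the $C_n$ to be ``sequentially bounded'' subsets of $Y$ that are not themselves compact (exploiting the distinction between sequentially closed and closed in $\alpha Y$), and forces the choice of $s_x$ at every recursion step to kill any convergent sequence in $\alpha Y$ that would otherwise violate (ii) or prevent sequentiality. This balancing act is precisely where $\mathfrak b = \mathfrak c$ is essential, since at each stage one must simultaneously dominate fewer than $\mathfrak c$ elements of $\omega^\omega$ by a single new function. Once these points are handled, countable $\mathrm{sb}$-character at $\infty$ follows from (i) and (ii), and $\psi(\alpha Y) = \mathfrak c$ from the compact-covering computation.
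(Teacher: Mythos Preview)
The paper does not actually contain a proof of this proposition: it is stated as a known result, attributed in the text preceding it to Yakovlev \cite{Ya} (under CH) and to Nyikos \cite{Nyi2} for the weakening to $\mathfrak b=\mathfrak c$, and no argument for it appears in the ``Proofs of Main Results'' section. Your plan is therefore to supply exactly the proof the paper defers to those references, and your outline---a transfinite scattered construction in which each new point is the limit of a carefully chosen sequence from earlier levels, with $\mathfrak b=\mathfrak c$ invoked at each of the $\mathfrak c$ stages to dominate the fewer than $\mathfrak b$ functions encoding previously handled sequences---matches the shape of the Yakovlev--Nyikos argument.

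One expository point to tighten: the family $\{C_n\}$ cannot literally be ``fixed once and for all'' before $Y$ exists, since $Y$ is built through the recursion; in the actual construction the $C_n$ are specified in terms of the level structure (or grown in parallel with $Y$), and part of the inductive hypothesis is that the barrier and network conditions for the $N_n$ are maintained at every stage. Also, your sentence about $\mathfrak b=\mathfrak c$ supplying ``an unbounded family of size $\mathfrak c$'' is not the operative fact; what you need (and correctly state later) is that any family of size $<\mathfrak c=\mathfrak b$ is bounded, so a single dominating function exists at each stage.
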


We shall use the above proposition to construct examples of
topologically homogeneous spaces with countable
$\mathrm{cs}$-character and uncountable pseudocharacter. This
shows that Theorem~\ref{cardinal} is specific for topological
groups and cannot be generalized to topologically homogeneous
spaces. We remind that a topological space $X$ is {\em
topologically homogeneous} if for any points $x,y\in X$ there is a
homeomorphism $h:X\to X$ with $h(x)=y$.

\begin{theo}\label{dowker}\hfill
\begin{enumerate}
\item There is a topologically homogeneous countable regular
$k_\omega$-space $X_1$ with
$\aleph_0=\mathrm{sb}_\chi(X_1)<\chi(X_1)=\mathfrak d$ and
$\mathrm{so}(X_1)=\omega$;
\item Under $\mathfrak b=\mathfrak c$ there is a sequential
topologically homogeneous zero-dimensional $k_\omega$-space $X_2$
with $\aleph_0=\mathrm{cs}_\chi(X_2)<\psi(X_2)=\mathfrak c$;
\item Under $\mathfrak b=\mathfrak c$ there is a sequential topologically homogeneous
 totally disconnected space $X_3$ with
$\aleph_0=\mathrm{sb}_\chi(X_3)<\psi(X_3)=\mathfrak c$.
\end{enumerate}
\end{theo}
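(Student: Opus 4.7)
The plan is to produce each $X_i$ by a separate construction, all guided by the same philosophy: Theorem~\ref{cardinal} forbids any sequential topological group with countable $\mathrm{cs}^*$-character from satisfying $\mathrm{sb}_\chi<\chi$ or $\psi>\aleph_0$, so each $X_i$ must be homogeneous through a structure strictly weaker than a group. In every case the scheme is (a) pick a non-homogeneous model with the desired cardinal invariants, (b) embed it into a homogeneous coset space or direct limit of $k_\omega$-pieces, and (c) check that the relevant character stays countable after homogenization.

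For (1), the non-homogeneous model is the Arens space $S_2$, which already satisfies $\aleph_0=\mathrm{sb}_\chi(S_2)<\chi(S_2)=\mathfrak d$ and is a countable regular $k_\omega$-space. To homogenize while pushing the sequential order up to exactly $\omega$, I would take a coset space $G/H$ of a countable $\mathcal{M}_\omega$-group $G$ (a natural candidate being the free topological boolean group over a convergent sequence) by a closed discrete subgroup $H$ chosen so that the neighborhood filter at the base point of $G/H$ is sequentially homeomorphic to that of the apex of $S_2$. Left multiplication by $G$ then acts transitively by homeomorphisms on $G/H$, and the equality $\mathrm{so}(X_1)=\omega$ should follow from a classical computation of the sequential order of free topological groups on a convergent sequence.

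For (2) and (3), work under $\mathfrak b=\mathfrak c$ and start from the Yakovlev space $\alpha Y$ of Proposition~\ref{yakovlev}, whose sole ``bad'' point is $\infty$. I would construct $X$ as the direct limit of a tower $\alpha Y=Z_0\hookrightarrow Z_1\hookrightarrow\cdots$ of countable $k_\omega$-spaces where each embedding $Z_n\hookrightarrow Z_{n+1}$ slots every point of $Z_n$ into the $\infty$-position of a freshly added copy of $\alpha Y$ in $Z_{n+1}$. Then in $X=\bigcup_n Z_n$ every point has the same local type as $\infty\in\alpha Y$, so $\psi(X)=\mathfrak c$ while the local $\mathrm{cs}$-network (respectively $\mathrm{sb}$-network) at each point stays countable. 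A back-and-forth argument between compatible copies of $\alpha Y$ inside $X$ supplies a transitive group of self-homeomorphisms. For (2) one takes $Y$ zero-dimensional (forced by regularity plus local countability); for (3) one has to arrange that at each stage the cover of $Z_{n+1}$ by the previous $Z_n$ and the newly inserted copies of $\alpha Y$ is point-finite, so that Proposition~\ref{prop3}(10) yields countable $\mathrm{sb}$-character in the limit.

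The hard part is (3). The $\mathrm{cs}$-character is preserved through point-countable inductive covers, but the $\mathrm{sb}$-character only through point-finite ones, so the embeddings in the tower must be engineered to avoid multiplying sequential barriers at any single point while still supplying enough copies for the back-and-forth to go through. Secondary difficulties I anticipate are: verifying the transitivity of the self-homeomorphism group of the direct limit, which depends on a uniform local isomorphism between the pieces that must be built in from the start; and in (1), pinning down a subgroup $H$ for which $G/H$ is genuinely non-metrizable --- forcing $\chi(X_1)=\mathfrak d$ via Theorem~\ref{cardinal} applied inside $G$ --- while still retaining a countable $\mathrm{sb}$-network at the identity coset.
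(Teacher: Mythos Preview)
Your plan for (2) and (3) has a concrete error and a structural gap. First, the Yakovlev space $\alpha Y$ of Proposition~\ref{yakovlev} is not countable: since $\psi(\alpha Y)=\mathfrak c$, the underlying set has size $\mathfrak c$, so your tower $Z_0\hookrightarrow Z_1\hookrightarrow\cdots$ is not a tower of countable spaces. More seriously, ``slotting every point of $Z_n$ into the $\infty$-position of a fresh copy of $\alpha Y$'' is a bouquet construction over an index set of size $\mathfrak c$; the resulting $Z_{n+1}$ carries the inductive topology with respect to $\mathfrak c$ many compacta and is neither compact nor $k_\omega$. So the $k_\omega$ conclusion in (2) cannot be reached along this route, and the back-and-forth you invoke for homogeneity has no countable scaffold to run on. Your point-finiteness requirement for (3) makes this worse, not better: a point-finite cover of $Z_{n+1}$ by $Z_n$ and the new copies forces you to attach copies at only finitely many points of each compact piece, which destroys the ``every point has the same local type'' claim.

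The paper sidesteps all of this with a product trick rather than an iterated gluing. One forms subsets $\Xi(Z),\Theta(Z)\subset 2^\omega\times(\alpha Z)^\omega$ consisting of sequences that are eventually $\infty$, equipped with the inductive topology over an explicit family of compacta of the form $2^\omega\times\prod_i C_i$. The $k_\omega$-structure for $\Xi(Z)$ then comes for free from finite products of compacta, independently of $|Z|$; the countable $\mathrm{cs}$- (resp.\ $\mathrm{sb}$-) character is read off from Proposition~\ref{prop3}(6,8--10); and homogeneity is obtained not by back-and-forth but by Brouwer's characterization of the Cantor set, using the $2^\omega$ factor together with local zero-dimensional metrizable compact neighborhoods in $Z$ to build the required self-homeomorphisms explicitly. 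For (1) the paper does not use a coset space at all but the Arkhangel'ski\u{\i}--Franklin space, which is the countable analogue of $\Theta(\mathbb N)$; your coset-space idea would need to explain how $\mathrm{sb}_\chi(G/H)$ drops to $\aleph_0$ when Theorem~\ref{cardinal} forces $\mathrm{sb}_\chi(G)=\mathfrak d$ for the ambient group, and you have not indicated a mechanism for that.
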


We remind that a space $X$ is {\em totally disconnected} if for
any distinct points $x,y\in X$ there is a continuous function
$f:X\to\{0,1\}$ such that $f(x)\ne f(y)$, see \cite{En2}.

\begin{rem} The space $X_1$ from Theorem~\ref{dowker}(1) is the well-known
Arkhangelski-Franklin example \cite{AF} (see also \cite[10.1]{Co})
of a countable topologically homogeneous $k_\omega$-space,
homeomorphic to no topological group (this also follows from
Theorem~\ref{cardinal}). On the other hand, according to
\cite{Ze2}, each topologically homogeneous countable regular space
(in particular, $X_1$) is homeomorphic to a {\em quasitopological
group}, that is a topological space endowed with a separately
continuous group operation with continuous inversion. This shows
that Theorem~\ref{cardinal} cannot be generalized onto
quasitopological groups (see however \cite{Zd} for generalizations
of Theorems~\ref{main} and \ref{cardinal} to some other
topologo-algebraic structures).
\end{rem}

Next, we find conditions under which a space with countable
$\mathrm{cs}^*$-character is first-countable or has countable
$\mathrm{sb}$-character. Following \cite{Ar4} we define a
topological space $X$ to be {\em $c$-sequential\/} if for each
closed subspace $Y\subset X$ and each non-isolated point $y$ of
$Y$ there is a sequence $(y_n)\subset Y\setminus\{y\}$ convergent
to $y$. It is clear that each sequential space is $c$-sequential.
A point $x$ of a topological space $X$ is called {\em regular
$G_\delta$} if $\{x\}=\cap\mathcal B$ for some countable family
$\mathcal B$ of closed neighborhood of $x$ in $X$, see \cite{Lin}.

First we characterize spaces with countable
$\mathrm{sb}$-character (the first three items of this
characterization were proved by Lin \cite[3.13]{Lin} in terms of
(universally) $csf$-countable spaces).

\begin{prop}\label{sb} For a Hausdorff space $X$ the following conditions are
equivalent:
\begin{enumerate}
\item $X$ has countable $\mathrm{sb}$-character;
\item $X$ is an $\alpha_1$-space with countable $\mathrm{cs}^*$-character;
\item $X$ is an $\alpha_4$-space with countable $\mathrm{cs}^*$-character;
\item $\mathrm{cs}^*_\chi(X)\le\aleph_0$ and $\mathrm{sb}_\chi(X)<\mathfrak p$.

\hskip-50pt Moreover, if $X$ is $c$-sequential and each point of
$X$ is regular $G_\delta$, then the conditions (1)--(4) are
equivalent to:

\item $\mathrm{cs}^*_\chi(X)\le\aleph_0$ and $\mathrm{sb}_\chi(X)<\mathfrak d$.
\end{enumerate}
\end{prop}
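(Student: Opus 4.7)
The proof splits along the two ``blocks'' of equivalences. The implications $(1) \Leftrightarrow (2) \Leftrightarrow (3)$ are due to Lin \cite[3.13]{Lin}; for completeness, my plan for $(1)\Rightarrow(2)$ is as follows. Replace the given countable $\mathrm{sb}$-network at $x$ by a decreasing chain $B_0 \supseteq B_1 \supseteq \cdots$ of sequential barriers forming an $\mathrm{sb}$-network. Given sequences $S_n = (s_n^i)_{i\in\omega}$ converging to $x$, put $m_{n,k} := \min\{i : s_n^j \in B_k \text{ for all } j \ge i\}$; this is finite since $B_k$ is a sequential barrier, and nondecreasing in $k$. Set $S = \bigcup_n\{s_n^i : i \ge m_{n,n}\}$. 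Then $S_n \setminus S$ is contained in the finite set $\{s_n^i : i < m_{n,n}\}$, and for any neighborhood $U$ of $x$, choosing $k$ with $B_k \subseteq U$, the set $S \setminus U$ is contained in the finite union $\bigcup_{n<k}\{s_n^i : m_{n,n} \le i < m_{n,k}\}$ (the rows $n \ge k$ contribute nothing, as $i \ge m_{n,n} \ge m_{n,k}$ forces $s_n^i \in B_k$). This witnesses the $\alpha_1$-property at $x$. The implication $(2)\Rightarrow(3)$ is immediate since $\alpha_1$-spaces are $\alpha_4$.

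The implication $(1)\Rightarrow(4)$ is trivial since $\aleph_0 < \mathfrak p$. The main work lies in $(4)\Rightarrow(3)$, which I plan to establish by showing that the hypothesis of $(4)$ makes $X$ an $\alpha_4$-space; combined with Lin's $(3)\Rightarrow(1)$ this yields $(4)\Rightarrow(1)$. Fix $x \in X$ and sequences $S_n \to x$, $n \in \omega$. After discarding finitely many terms from each, assume the sets $S_n$ are pairwise disjoint and do not contain $x$, and set $X' = \bigcup_{n\in\omega} S_n$, a countable set. Choose an $\mathrm{sb}$-network $\mathcal B$ at $x$ with $\kappa := |\mathcal B| < \mathfrak p$, closed under finite intersections (the closure has size $\kappa \cdot \aleph_0 < \mathfrak p$). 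Consider the family
\[
\mathcal F := \{B \cap X' : B \in \mathcal B\} \cup \{X' \setminus S_n : n \in \omega\}
\]
of subsets of $X'$. For any $B \in \mathcal B$ and $n$, the set $S_n \setminus B$ is finite because $B$ is a sequential barrier at $x$, so $B \cap X'$ contains all but finitely many elements of each $S_n$. Consequently a finite intersection from $\mathcal F$ has the form $\bigcup_{n \notin F}(S_n \cap B_1 \cap\cdots\cap B_l)$ for finite $F \subseteq \omega$ and $B_1,\ldots,B_l \in \mathcal B$, which is infinite. Hence $\mathcal F$ has the finite intersection property and $|\mathcal F| < \mathfrak p$, so by the definition of $\mathfrak p$ there is an infinite pseudo-intersection $T \subseteq X'$ satisfying $T \setminus F$ finite for every $F \in \mathcal F$. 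From $T \setminus (X'\setminus S_n)$ finite one reads off that each $T \cap S_n$ is finite, whence $T$ meets infinitely many $S_n$; from $T \setminus (B\cap X')$ finite one gets $T \setminus B$ finite for every $B \in \mathcal B$, and since $\mathcal B$ is a network at $x$ this forces $T$ to converge to $x$. Thus $X$ is $\alpha_4$ at $x$, as desired.

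Turning to $(5)$ under the additional hypotheses that $X$ is $c$-sequential and each point is regular $G_\delta$: $(1)\Rightarrow(5)$ is trivial, and the reverse is to be proved by adapting the argument above. Fix a decreasing sequence $\{V_k\}_{k\in\omega}$ of closed neighborhoods of $x$ with $\bigcap_k V_k = \{x\}$, and encode each $S_n$ by its entry-time function $h_n(k) := \min\{i : s_n^j \in V_k\text{ for all } j \ge i\} \in \omega^\omega$. Given an $\mathrm{sb}$-network $\mathcal B$ at $x$ of size $\kappa < \mathfrak d$, the plan is to associate to each $B \in \mathcal B$ a function $g_B \in \omega^\omega$ recording how $B$ absorbs the pre-chosen sequences, and, using $\kappa < \mathfrak d$ in place of $\kappa < \mathfrak p$, to select a diagonal $f : \omega \to \omega$ and points $y_n = s_n^{f(n)}$ whose set $T$ satisfies $T \setminus V_k$ finite for every $k$ and $T \cap S_n$ finite for every $n$, again yielding $\alpha_4$. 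The chief obstacle is that $\bigcap_k V_k = \{x\}$ does not by itself guarantee that $T \setminus V_k$ finite for all $k$ implies $T \to x$; here $c$-sequentiality is to be applied to the closed set $\overline{T}\setminus\{x\}$, observing that a cluster point $y\ne x$ would lie outside some $V_k$ and hence in the finite set $T\setminus V_k$, so that any nontrivial convergent sequence extracted via $c$-sequentiality produces a contradiction. This upgrades $G_\delta$-entry to genuine convergence and concludes $(5)\Rightarrow(1)$ through Lin's $(3)\Rightarrow(1)$.
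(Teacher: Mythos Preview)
Your strategy coincides with the paper's: cite Lin for $(1)\Leftrightarrow(2)\Leftrightarrow(3)$ and close the remaining implications by showing that hypothesis $(4)$, respectively $(5)$, forces $X$ to be an $\alpha_4$-space (the paper isolates exactly this as a separate lemma). The pseudo-intersection argument for $(4)$ and the undominated-diagonal argument using $c$-sequentiality for $(5)$ are precisely what the paper does.

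There is one technical slip in your $(4)\Rightarrow(3)$. The reduction ``after discarding finitely many terms from each, assume the $S_n$ are pairwise disjoint'' is not valid in general (take all $S_n$ equal); without it your family $\mathcal F$ can fail the finite intersection property, since $X'\setminus S_n$ may be empty. The paper avoids this entirely by using the sets $F_{N,n}=N\cap\bigcup_{i\ge n}S_i$ in place of your $X'\setminus S_n$: these are always infinite (each sequential barrier $N$ contains a tail of every $S_i$), they are closed under finite intersection once $\mathcal N$ is, and a pseudo-intersection $T$ then has $T\cap\bigcup_{i\ge n}S_i$ cofinite in $T$ for every $n$, so $T$ meets $S_i$ for arbitrarily large $i$. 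Your argument is easily repaired along these lines. In your sketch for $(5)$, the phrase ``hence in the finite set $T\setminus V_k$'' is not quite right, since a cluster point $y$ need not lie in $T$; the correct observation is that $X\setminus V_k$ is an open neighborhood of $y$ meeting $T$ only in the finite set $T\setminus V_k$. Also make explicit that $x$ \emph{is} a cluster point of $T$ before invoking $c$-sequentiality; this is what the choice of $f$ undominated by the $g_B$'s is for. The paper streamlines this step by first replacing each $S_n$ with $S_n\cap V_n$, which makes $T\setminus V_k$ finite automatic and renders the entry-time functions $h_n$ unnecessary.
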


Next, we give a characterization of first-countable spaces in the
same spirit (the equivalences
$(1)\Leftrightarrow(2)\Leftrightarrow(5)$ were proved by Lin
\cite[2.8]{Lin}).

\begin{prop}\label{chi} For a Hausdorff space $X$ with countable
$\mathrm{cs}^*$-character the following conditions are equivalent:
\begin{enumerate}
\item $X$ is first-countable;
\item $X$ is Fr\'echet-Urysohn and has countable $\mathrm{sb}$-character;
\item $X$ is Fr\'echet-Urysohn $\alpha_7$-space;
\item $\chi(X)<\mathfrak p$ and $X$ has countable tightness.

\hskip-50pt Moreover, if each point of $X$ is regular $G_\delta$,
then the conditions (1)--(4) are equivalent to:

\item $X$ is a sequential space containing no closed copy of $S_2$
or $S_\omega$;
\item $X$ is a sequential space with $\chi(X)<\mathfrak d$.
\end{enumerate}
\end{prop}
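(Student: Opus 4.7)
The plan is to establish the equivalence of (1)--(4) by the pattern $(1)\Leftrightarrow(2)\Leftrightarrow(3)$ and $(1)\Leftrightarrow(4)$, and then handle the ``moreover'' part separately.

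The equivalence $(1)\Leftrightarrow(2)$ is immediate from Proposition~\ref{prop1}: item~(2) gives $\chi(X)=\mathrm{sb}_\chi(X)$ for any Fr\'echet--Urysohn space, so first-countability and countable $\mathrm{sb}$-character coincide in the Fr\'echet--Urysohn setting, while (1) trivially implies Fr\'echet--Urysohnness. For $(2)\Leftrightarrow(3)$ I invoke Proposition~\ref{sb}: assuming $\mathrm{cs}^*_\chi(X)\le\aleph_0$, countable $\mathrm{sb}$-character is equivalent to the $\alpha_4$-property. Since $\alpha_4\Rightarrow\alpha_7$ trivially, only $(3)\Rightarrow(2)$ requires work, namely the upgrade $\alpha_7\Rightarrow\alpha_4$ in the Hausdorff Fr\'echet--Urysohn setting (Lemma~\ref{alpha4}): an $\alpha_7$-witness $S\to y$ for given sequences $S_n\to x$ must actually have $y=x$, since otherwise disjoint open neighborhoods of $x,y$ combined with Fr\'echet--Urysohnness produce a contradiction.

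The direction $(1)\Rightarrow(4)$ is trivial. For $(4)\Rightarrow(1)$ I argue in two steps. First, $\chi(X)<\mathfrak p$ together with Proposition~\ref{prop1}(1) gives $\mathrm{sb}_\chi(X)<\mathfrak p$, and then Proposition~\ref{sb}(4) upgrades this (using $\mathrm{cs}^*_\chi(X)\le\aleph_0$) to $\mathrm{sb}_\chi(X)\le\aleph_0$. Second, I derive Fr\'echet--Urysohnness from countable tightness together with $\chi(X)<\mathfrak p$: if $a$ is a cluster point of $A\subset X$, countable tightness yields a countable $B\subset A\setminus\{a\}$ with $a\in\overline B$; every neighborhood of $a$ then meets $B$ in an infinite set (by Hausdorffness), so the family $\{V\cap B:V\in\mathcal V\}$, for $\mathcal V$ a neighborhood base at $a$ of size $<\mathfrak p$ closed under finite intersections, is a centered family of infinite subsets of the countable set $B$ of cardinality $<\mathfrak p$; by the definition of $\mathfrak p$ it admits an infinite pseudo-intersection, an enumeration of which is a sequence in $A$ converging to $a$. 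Combining Fr\'echet--Urysohnness with countable $\mathrm{sb}$-character is (2), hence (1).

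For the ``moreover'' part, $(1)\Rightarrow(5)$ and $(1)\Rightarrow(6)$ are trivial: first-countability is hereditary while $\chi(S_2)=\chi(S_\omega)=\mathfrak d$, and $\aleph_0<\mathfrak d$. The implication $(6)\Rightarrow(5)$ follows from the same character bound, since a closed copy of $S_2$ or $S_\omega$ in $X$ would force $\chi(X)\ge\mathfrak d$. The main content is $(5)\Rightarrow(1)$: for sequential $X$ one has $\sigma X=X$, so absence of a closed copy of $S_\omega$ translates, via the characterization recorded just after the definition of $\alpha_7$-spaces, into the $\alpha_7$-property. Absence of a closed copy of $S_2$, under the regular $G_\delta$ hypothesis, yields Fr\'echet--Urysohnness: at an alleged failure of Fr\'echet--Urysohnness at a point $a$ with $\{a\}=\bigcap_n\overline U_n$, sequentiality lets one build, inductively on $n$, convergent sequences to $a$ lying within successive levels of $\overline U_n$, and the nested closed-neighborhood structure is precisely what makes the resulting copy of $S_2$ a \emph{closed} subspace of $X$. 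This gives (3), hence (1).

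The step I expect to be the main obstacle is this last one, converting ``sequential, no closed $S_2$'' into Fr\'echet--Urysohnness: ensuring that the embedded copy of $S_2$ sits as a closed subspace of $X$ (rather than merely as a topological copy) is precisely where the regular $G_\delta$ hypothesis is indispensable, and the inductive bookkeeping with the nested closed neighborhoods requires care. The pseudo-intersection argument in $(4)\Rightarrow(1)$ is the other nontrivial ingredient but is a standard application of the definition of $\mathfrak p$.
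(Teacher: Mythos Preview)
Your logical skeleton coincides with the paper's: $(1)\Leftrightarrow(2)$ via Proposition~\ref{prop1}(2), $(3)\Rightarrow(2)$ via Lemma~\ref{alpha4}(1) together with Proposition~\ref{sb}, the pseudo-intersection argument for $(4)\Rightarrow$(Fr\'echet--Urysohn), and $(6)\Rightarrow(5)$ via $\chi(S_2)=\chi(S_\omega)=\mathfrak d$ are exactly the paper's moves. The only divergence is in $(5)\Rightarrow(1)$: the paper outsources both substeps to \cite{Lin} (Lemma~2.5 there for ``sequential, no closed $S_2$ $\Rightarrow$ Fr\'echet--Urysohn'' and Theorem~3.6 there for $\alpha_4$), whereas you use the $\alpha_7$-characterisation recorded after the definition of $\alpha_7$-spaces and propose a direct closed-$S_2$ construction. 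Conceptually these are the same route.

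Two warnings about your sketches. Your parenthetical reason for Lemma~\ref{alpha4}(1) is wrong: it is \emph{not} true that an $\alpha_7$-witness $S\to y$ must satisfy $y=x$ (in $\mathbb R$ take $S_n=\{1/k:k\ge1\}\cup\{2-1/n\}\to 0$ and $S=\{2-1/n:n\ge1\}\to 2$). The actual proof first confines all possible $\alpha_7$-limits to $\bigcap_n U_n$, shows $x$ is a cluster point of that set of limits, and then applies Fr\'echet--Urysohnness twice more to manufacture the $\alpha_4$-witness. Since you cite the lemma rather than reprove it this does not damage your argument, but the intuition you record would not establish it. Second, the $S_2$-construction in $(5)$ is misdescribed: one does not build ``convergent sequences to $a$'' indexed by $n$; one builds a \emph{single} sequence $(a_k)\to a$ (obtained from the sequential closure of a set witnessing the failure of Fr\'echet--Urysohnness) and then, for each $k$, a sequence converging to $a_k$, not to $a$. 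The nested closed neighbourhoods $\overline U_n$ are used to thin these branches so that the resulting two-level fan is closed in $X$. You rightly flag this as the delicate step, but the shape of what must be built is a two-level object, not a family of sequences sharing the limit $a$.
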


For Fr\'echet-Urysohn (resp. dyadic) compacta the countability of
the $\mathrm{cs}^*$-character is equivalent to the first
countability (resp. the metrizability). We remind that a compact
Hausdorff space $X$ is called {\em dyadic} if $X$ is a continuous
image of the Cantor discontinuum $\{0,1\}^\kappa$ for some
cardinal $\kappa$.

\begin{prop}\label{dyadic}\hfill
\begin{enumerate}
\item A Fr\'echet-Urysohn countably compact space is first-countable if and only if it has countable
$\mathrm{cs}^*$-character.
\item  A dyadic compactum is metrizable
if and only if its has countable $\mathrm{cs}^*$-character.
\end{enumerate}
\end{prop}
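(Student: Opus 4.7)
Both ``only if'' directions follow at once from $\mathrm{cs}^*_\chi(X)\leq\chi(X)$ of Proposition~\ref{prop1}(1).

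For the ``if'' direction of (1), let $X$ be Fr\'echet-Urysohn, countably compact, with countable $\mathrm{cs}^*$-character. I will first show that $X$ is sequentially compact: given a sequence $(x_n)\subset X$, either some value is attained infinitely often (giving a trivially convergent subsequence), or the range is infinite, in which case countable compactness supplies an $\omega$-accumulation point $x\in X$ of the range, and the Fr\'echet-Urysohn property extracts a sequence from the range converging to $x$---whence a subsequence of $(x_n)$ converges to $x$. Sequentially compact spaces are $\alpha_7$ (noted in the paper just before Proposition~\ref{sb}), so $X$ is a Fr\'echet-Urysohn $\alpha_7$-space with countable $\mathrm{cs}^*$-character, and Proposition~\ref{chi}(3) yields first-countability.

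For the ``if'' direction of (2), let $X$ be a dyadic compactum with countable $\mathrm{cs}^*$-character. The plan is to reduce to (1) via the classical Esenin-Volpin theorem $w(X)=\chi(X)$ for dyadic compacta: showing $X$ first-countable then gives $w(X)\leq\aleph_0$, hence metrizability. Part~(1) reduces first-countability to the Fr\'echet-Urysohn property, so the goal becomes showing $X$ is Fr\'echet-Urysohn. I argue by contradiction, invoking a classical embedding theorem (essentially due to Efimov): every non-metrizable dyadic compactum contains a closed topological copy of the Cantor cube $\{0,1\}^{\omega_1}$. By Proposition~\ref{prop3}(1), this closed copy would inherit countable $\mathrm{cs}^*$-character, so the task reduces to verifying $\mathrm{cs}^*_\chi(\{0,1\}^{\omega_1},0)\geq\aleph_1$.

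The main obstacle is this last inequality. For a candidate countable family $\mathcal{N}=\{N_k:k\in\omega\}$ at $0$, let $e_\gamma\in\{0,1\}^{\omega_1}$ denote the indicator function of $\{\gamma\}$, and set $A_k=\bigcup_{y\in N_k}\mathrm{supp}(y)\subset\omega_1$, so that $N_k\subset V_\alpha:=\{y:y(\alpha)=0\}$ exactly when $\alpha\notin A_k$. A pigeonhole argument on the $\aleph_1$-many basic neighborhoods $V_\alpha$ against the $\aleph_0$-many members of $\mathcal{N}$ forces uncountably many $N_k$ to have uncountable $A_k$, and in particular large auxiliary sets $B_k:=\{\gamma:e_\gamma\in N_k\}$. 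A careful diagonal construction then produces a convergent sequence $(e_{\gamma_n})\to 0$---with each coordinate used in only finitely many $\gamma_n$'s to ensure convergence---together with a witness coordinate $\alpha$, such that no $N_k$ simultaneously satisfies $N_k\subset V_\alpha$ and contains $e_{\gamma_n}$ for infinitely many $n$. Balancing the convergence condition against simultaneous evasion of all $N_k$'s is the central technical challenge of the argument.
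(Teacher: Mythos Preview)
Your treatment of part~(1) matches the paper's: Fr\'echet-Urysohn plus countably compact gives sequentially compact, hence $\alpha_7$, and Proposition~\ref{chi}$(3\Rightarrow1)$ finishes.

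For part~(2) your route diverges from the paper's and is considerably harder than necessary. The paper's argument is three lines: if the dyadic compactum $X$ is not metrizable, then by a classical result \cite[3.12.12(i)]{En1} it contains a copy of $\alpha D$ for some uncountable discrete $D$; but $\alpha D$ is a Fr\'echet-Urysohn compactum, so part~(1), just proved, would force $\alpha D$ to be first-countable---a contradiction. No Esenin-Volpin detour, no combinatorics, and part~(1) does all the work.

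You instead embed $\{0,1\}^{\omega_1}$ via Efimov and then try to bound $\mathrm{cs}^*_\chi(\{0,1\}^{\omega_1},0)$ from below by a direct diagonal construction. The sketch is not convincing as written: the claim that pigeonhole ``forces uncountably many $N_k$ to have uncountable $A_k$'' is literally impossible, since there are only countably many $N_k$; whatever you intended there needs to be reformulated before the rest of the plan can be assessed. More to the point, your ``central technical challenge'' evaporates once you notice that $\{0\}\cup\{e_\gamma:\gamma<\omega_1\}$ is a closed copy of $\alpha(\omega_1)$ inside $\{0,1\}^{\omega_1}$, to which part~(1) applies directly---and at that moment your argument has collapsed to the paper's. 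So the moral is that part~(1) is not merely a warm-up but the engine of part~(2); you invoked it only to reduce to the Fr\'echet-Urysohn property of $X$ itself, and missed the stronger use against the embedded subspace.
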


In light of Proposition~\ref{dyadic}(1) one can suggest that
$\mathrm{cs}^*_\chi(X)=\chi(X)$ for any compact Fr\'echet-Urysohn
space $X$. However that is not true: under CH,
$\mathrm{cs}_\chi(\alpha D)\ne\chi(\alpha D)$ for the one-point
compactification $\alpha D$ of a discrete space $D$ of size
$|D|=\aleph_2$. Surprisingly, but the problem of calculating the
$\mathrm{cs}^*$- and $\mathrm{cs}$-characters of the spaces
$\alpha D$ is not trivial and the definitive answer is known only
under the Generalized Continuum Hypothesis. First we note that the
cardinals $\mathrm{cs}^*_\chi(\alpha D)$ and
$\mathrm{cs}_\chi(\alpha D)$ admit an interesting interpretation
which will be used for their calculation.

\begin{prop}\label{alphaD} Let $D$ be an infinite discrete space.
Then
\begin{enumerate}
\item $\mathrm{cs}^*_\chi(\alpha D)=\min\{w(X): X$ is a (regular zero-dimensional)
topological space of size $|X|=|D|$ containing non no-trivial
convergent sequence$\}$;
\item $\mathrm{cs}_\chi(\alpha D)=\min\{w(X): X$ is a (regular zero-dimensional)
topological space of size $|X|=|D|$ containing no countable
non-discrete subspace$\}$.
\end{enumerate}
\end{prop}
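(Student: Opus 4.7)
I will prove the two equalities in parallel by matching upper and lower bounds. Identifying $\alpha D = D \cup \{\infty\}$, being a cs*-network (resp.\ cs-network) at $\infty$ amounts to: for every finite $F \subset D$ and every infinite $A \subset D$, some $N \in \mathcal N$ has $N \cap F = \emptyset$ and $|N \cap A| = \aleph_0$ (resp.\ $|A \setminus N| < \aleph_0$).

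For the upper bound, start from an eligible space $X$ on $D$ of weight $\lambda$ and a base $\mathcal B$ of size $\lambda$ closed under finite unions. Set $\mathcal N = \{\alpha D \setminus B : B \in \mathcal B\}$. In part~(2), applying ``no countable non-discrete subspace'' to the countable set $A \cup F$ gives, for each $y \in F$, a basic neighborhood $B_y$ of $y$ with $B_y \cap A \subset \{y\}$, so $B = \bigcup_{y \in F} B_y$ is a base element containing $F$ with $B \cap A$ finite, and $\alpha D \setminus B$ is the desired network element. In part~(1) this naive union may absorb all of $A$, so I induct on $|F|$: having chosen $B_1, \dots, B_j$ leaving $A_j := A \setminus (B_1 \cup \dots \cup B_j)$ infinite, the absence of non-trivial convergent sequences in $X$ prevents $A_j$ from converging to $y_{j+1}$, so some basic $B_{j+1} \ni y_{j+1}$ keeps $A_j \setminus B_{j+1}$ infinite; after $|F|$ steps $B = B_1 \cup \dots \cup B_{|F|}$ meets the cs*-requirement.

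For the lower bound, let $\mathcal N$ be a network of size $\lambda$. Since $\alpha D$ is Fr\'echet--Urysohn, Proposition~\ref{prop1}(2) gives $\chi(\alpha D) = \mathrm{sb}_\chi(\alpha D)$ and Proposition~\ref{prop1}(4) gives $\mathrm{sb}_\chi(\alpha D) \le 2^{\mathrm{cs}^*_\chi(\alpha D)}$; together with $\chi(\alpha D) = |D|$ this yields $|D| \le 2^\lambda$, so I may augment $\mathcal N$ by a family of $\le \lambda$ subsets of $D$ separating all points, producing $\mathcal N'$ of the same cardinality. Topologizing $D$ with the clopen subbase $\mathcal N' \cup \{D \setminus N : N \in \mathcal N'\}$ yields a Hausdorff, regular, zero-dimensional space $X$ of weight $\le \lambda$ and size $|D|$. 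A would-be non-trivial convergent sequence $a_n \to y$ supplies, via the cs*-property applied to $F = \{y\}$, some $N \in \mathcal N'$ with $y \notin N$ and $|N \cap \{a_n\}| = \aleph_0$; then $D \setminus N$ is an open neighborhood of $y$ contradicting convergence, settling part~(1). For part~(2), given countable $A \subset D$ and $y \in A$ with $A \setminus \{y\}$ infinite, the cs-property produces $N \in \mathcal N'$ with $y \notin N$ and $(A \setminus \{y\}) \setminus N$ finite, and intersecting $D \setminus N$ with the separating clopens that eject those finitely many points isolates $y$ in $A$, so $A$ is discrete.

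The principal difficulty is the inductive step of part~(1) of the upper bound: unlike in part~(2), where the stronger subspace-discreteness hypothesis forces each $B_y$ to meet $A$ in only finitely many points directly, in part~(1) only the pointwise non-convergence of $A$ at each $y \in F$ is available, so the $B_j$'s must be chosen sequentially against increasingly thinned subsets of $A$. The supporting bound $|D| \le 2^\lambda$ is the other delicate point, since it is what permits a point-separating augmentation of $\mathcal N$ without exceeding the size $\lambda$.
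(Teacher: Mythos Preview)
Your proof is correct and, for the upper bound, essentially identical to the paper's: both build the $\mathrm{cs}^*$-network (resp.\ $\mathrm{cs}$-network) at $\infty$ from complements of a base closed under finite unions, with the same finite induction in part~(1) to keep the remaining tail of $A$ infinite.

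The lower bound is where the two arguments diverge. The paper takes the Boolean algebra $\mathcal A$ generated by $\{D\setminus N:N\in\mathcal N\}$ as a clopen base on $D$, shows directly that the resulting (possibly non-$T_0$) topology has no non-trivial convergent sequences, and only \emph{afterwards} repairs the separation axiom: the indistinguishability classes are finite (an infinite anti-discrete subset would yield a non-trivial convergent sequence), so a set of representatives is a zero-dimensional $T_1$-space of full size $|D|$. Your route instead invokes Proposition~\ref{prop1}(2),(4) to get $|D|=\chi(\alpha D)=\mathrm{sb}_\chi(\alpha D)\le 2^{\mathrm{cs}^*_\chi(\alpha D)}\le 2^\lambda$, which lets you adjoin $\le\lambda$ point-separating sets up front and obtain a Hausdorff zero-dimensional topology in one stroke. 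The paper's approach is more self-contained (it does not appeal to other results of the paper), while yours avoids the quotient/selection step and makes the Hausdorff property immediate; both are perfectly valid and of comparable length.
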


 For
a cardinal $\kappa$ we put $\log \kappa=\min\{\lambda:\kappa\le
2^\lambda\}$ and $\mathrm{cof}([\kappa]^{\le\omega})$ be the
smallest size of a collection $\mathcal
C\subset[\kappa]^{\le\omega}$ such that each at most countable
subset $S\subset\kappa$ lies in some element $C\in\mathcal C$.
Observe that $\mathrm{cof}([\kappa]^{\le\omega})\le\kappa^\omega$
but sometimes the inequality can be strict:
$1=\mathrm{cof}([\aleph_0]^{\le\omega})<\aleph_0$ and
$\aleph_1=\mathrm{cof}([\aleph_1]^{\le\omega})<\aleph_1^{\aleph_0}$.
In the following proposition we collect all the information on the
cardinals $\mathrm{cs}^*_\chi(\alpha D)$ and
$\mathrm{cs}_\chi(\alpha D)$ we know.

\begin{prop}\label{alphaD1} Let $D$ be an uncountable discrete
space. Then
\begin{enumerate}
\item $\aleph_1\cdot\log|D|\le
\mathrm{cs}^*_\chi(\alpha D)\le\mathrm{cs}_\chi(\alpha
D)\le\min\{|D|,
2^{\aleph_0}\cdot\mathrm{cof}([\log|D|]^{\le\omega})\}$
while\newline $\mathrm{sb}_\chi(\alpha D)=\chi(\alpha D)=|D|$;
\item $\mathrm{cs}^*_\chi(\alpha D)=\mathrm{cs}_\chi(\alpha
D)=\aleph_1\cdot\log|D|$ under GCH.
\end{enumerate}
\end{prop}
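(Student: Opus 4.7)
The plan is to derive Proposition~\ref{alphaD1} from the weight-theoretic reformulation supplied by Proposition~\ref{alphaD}. I would begin with the easy equalities. Since every neighbourhood of $\infty$ in $\alpha D$ is cofinite in $D$, every infinite subset of $D$ contains a sequence converging to $\infty$, so $\alpha D$ is Fr\'echet-Urysohn; Proposition~\ref{prop1}(2) then gives $\mathrm{sb}_\chi(\alpha D)=\chi(\alpha D)$, and since a local base at $\infty$ must enumerate the complements of all finite subsets of $D$ we get $\chi(\alpha D)=|D|$. The chain $\mathrm{cs}^*_\chi(\alpha D)\le\mathrm{cs}_\chi(\alpha D)\le\chi(\alpha D)=|D|$ is immediate from Proposition~\ref{prop1}(1). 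For the lower bound, $\aleph_1\le \mathrm{cs}^*_\chi(\alpha D)$ comes from Proposition~\ref{chi}: $\alpha D$ is a compact (hence sequentially compact, hence $\alpha_7$) Fr\'echet-Urysohn space which is not first-countable, so its $\mathrm{cs}^*$-character cannot be countable. For $\log|D|\le\mathrm{cs}^*_\chi(\alpha D)$ I invoke Proposition~\ref{alphaD}(1): a witnessing regular (hence $T_0$) space $X$ of size $|D|$ and weight $\mathrm{cs}^*_\chi(\alpha D)$ satisfies $|D|=|X|\le 2^{w(X)}$, forcing $\log|D|\le w(X)$.

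For the upper bound $\mathrm{cs}_\chi(\alpha D)\le 2^{\aleph_0}\cdot\mathrm{cof}([\log|D|]^{\le\omega})$ I would use a $P$-space construction. Put $\kappa:=\log|D|$ and fix a cofinal family $\mathcal{C}\subset[\kappa]^{\le\omega}$ with $|\mathcal{C}|=\mathrm{cof}([\kappa]^{\le\omega})$. Topologise $2^\kappa$ by the basis of cylinders $[\tau]:=\{x\in 2^\kappa: x|_{\mathrm{dom}(\tau)}=\tau\}$ indexed by $\tau\in 2^C$, $C\in\mathcal{C}$. Cofinality of $\mathcal{C}$ guarantees that a countable intersection of basic opens is again basic (its index domain, a countable union from $\mathcal{C}$, lies inside some member of $\mathcal{C}$), so the resulting zero-dimensional $T_1$ topology is a $P$-space topology, and hence every countable subspace is discrete. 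Choosing any $X\subset 2^\kappa$ of size $|D|$ (available because $|D|\le 2^\kappa$) yields a regular zero-dimensional space of size $|D|$ with no countable non-discrete subspace whose weight is at most $|\mathcal{C}|\cdot 2^{\aleph_0}$; Proposition~\ref{alphaD}(2) converts this into the desired bound.

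Part (2) combines the estimates of (1) with GCH cardinal arithmetic. Under GCH one has $2^{\aleph_0}=\aleph_1$, $\mathrm{cof}([\kappa]^{\le\omega})=\kappa$ when $\mathrm{cf}(\kappa)>\omega$, and $\mathrm{cof}([\kappa]^{\le\omega})=\kappa^+$ when $\mathrm{cf}(\kappa)=\omega$. A case analysis on $\mathrm{cf}(\log|D|)$ and the position of $|D|$ in the interval $[\log|D|,\,(\log|D|)^+]$ verifies the equality $\mathrm{cs}^*_\chi(\alpha D)=\aleph_1\cdot\log|D|$ in every configuration \emph{except} the singular one $|D|=(\log|D|)^+$ with $\mathrm{cf}(\log|D|)=\omega$, in which the generic upper bound of (1) collapses to $(\log|D|)^+$, one cardinal larger than the required value. \textbf{The main obstacle} is to close this residual gap by exhibiting, under GCH, a regular zero-dimensional space of size $|D|$ and weight $\log|D|$ containing no countable non-discrete subspace; a natural candidate is the Stone space of a suitable $\sigma$-complete Boolean algebra of cardinality $\log|D|$ whose ultrafilter count realises the GCH maximum $2^{\log|D|}=|D|$, to which Proposition~\ref{alphaD}(2) then applies to yield the sharp equality.
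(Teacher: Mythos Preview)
Your argument for part~(1) is correct and is essentially the paper's own: the lower bound $\aleph_1\cdot\log|D|$ is obtained exactly as you do (the paper cites Proposition~\ref{dyadic}(1) and Proposition~\ref{prop1}(4), which amounts to your $|D|\le 2^{w(X)}$ observation via Proposition~\ref{alphaD}(1)), and your $P$-space on $2^{\log|D|}$ built from a cofinal family in $[\log|D|]^{\le\omega}$ is precisely the $\aleph_0$-box-product topology that the paper invokes for the upper bound.

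For part~(2) you have put your finger on a genuine gap --- one that the paper's own proof shares. The paper simply asserts that under GCH one has
\[
\aleph_1\cdot\log\kappa \;=\; 2^{\aleph_0}\cdot\min\{\kappa,(\log\kappa)^{\omega}\},
\]
and declares (2) a consequence of this together with (1). But this identity fails exactly in the case you isolate: under GCH with $\kappa=|D|=\aleph_{\omega+1}$ one gets $\log\kappa=\aleph_\omega$, so the left side is $\aleph_\omega$ while the right side is $\aleph_{\omega+1}$. Equivalently, when $\mathrm{cf}(\log|D|)=\omega$ and $|D|=(\log|D|)^{+}$, the upper bound from~(1) collapses only to $(\log|D|)^{+}$, one step above the claimed value. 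So your ``main obstacle'' is real, and it is not handled in the paper.

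Two remarks on your proposed resolution. First, the Stone space of a $\sigma$-complete Boolean algebra has no non-trivial convergent sequences, but it can still contain countable non-discrete subspaces (think of countable subsets of $\beta\omega\setminus\omega$); hence such a space witnesses the bound in Proposition~\ref{alphaD}(1) for $\mathrm{cs}^*_\chi$, not the bound in Proposition~\ref{alphaD}(2) for $\mathrm{cs}_\chi$ as you write. This is easily repaired: once $\mathrm{cs}^*_\chi(\alpha D)=\aleph_1\cdot\log|D|$ is secured, Proposition~\ref{cs=cs*}(5) gives $\mathrm{cs}_\chi(\alpha D)=\mathrm{cs}^*_\chi(\alpha D)$ under GCH. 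Second, and more seriously, you still owe the existence of a $\sigma$-complete Boolean algebra of cardinality $\lambda=\log|D|$ whose Stone space has $2^{\lambda}$ points when $\mathrm{cf}(\lambda)=\omega$; this is not automatic (naive constructions such as countably generated $\sigma$-algebras inside $\mathcal P(\lambda)$ already have size $\lambda^{\aleph_0}=\lambda^{+}$), and without it the singular case remains open in your write-up as well as in the paper's.
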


In spite of numerous efforts some annoying problems concerning
$\mathrm{cs}^*$- and $\mathrm{cs}$-characters still rest open.

\begin{prb} Is there a (necessarily consistent) example of a
space $X$ with $\mathrm{cs}^*_\chi(X)\ne\mathrm{cs}_\chi(X)$? In
particular, is $\mathrm{cs}^*_\chi(\alpha \mathfrak
c)\ne\mathrm{cs}_\chi(\alpha\mathfrak c)$ in some model of ZFC?
\end{prb}

In light of Proposition~\ref{alphaD} it is natural to consider the
following three cardinal characteristics of the continuum which
seem to be new:
\begin{align*}
\mathfrak w_1=\min\{w(X): &\; X\mbox{ is a topological space of
size $|X|=\mathfrak c$ containing no}\\ &\mbox{\hskip10pt
non-trivial convergent sequence}\};\\ \mathfrak w_2=\min\{w(X):&\;
X\mbox{ is a topological space of size $|X|=\mathfrak c$
containing no}\\ &\mbox{\hskip10pt non-discrete countable
subspace}\};\\ \mathfrak w_3=\min\{w(X):&\; X\mbox{ is a $P$-space
of size $|X|=\mathfrak c$}\}.
\end{align*}

As expected, a {\em $P$-space} is a $T_1$-space whose any
$G_\delta$-subset is open. Observe that $\mathfrak
w_1=\mathrm{cs}^*_\chi(\alpha\mathfrak c)$ while $\mathfrak
w_2=\mathrm{cs}_\chi(\alpha \mathfrak c)$. It is clear that
$\aleph_1\le \mathfrak w_1\le\mathfrak w_2\le\mathfrak
w_3\le\mathfrak c$ and hence  the cardinals $\mathfrak w_i$,
$i=1,2,3$, fall into the category of small uncountable cardinals,
see \cite{Va}.

\begin{prb} Are the cardinals $\mathfrak w_i$, $i=1,2,3$, equal
to (or can be estimated via) some known small uncountable
cardinals considered in Set Theory? Is $\mathfrak w_1<\mathfrak
w_2<\mathfrak w_3$ in some model of ZFC?
\end{prb}

Our next question concerns the assumption $\mathfrak b=\mathfrak
c$ in Theorem~\ref{dowker}.

\begin{prb} Is there a ZFC-example of a sequential space $X$ with
$\mathrm{sb}_\chi(X)<\psi(X)$ or at least
$\mathrm{cs}^*_\chi(X)<\psi(X)$?
\end{prb}

Propositions~\ref{prop1} and \ref{sb} imply that
$\mathrm{sb}_\chi(X)\in\{1,\aleph_0\}\cup[\mathfrak d,\mathfrak
c]$ for any $c$-sequential topological space $X$ with countable
$\mathrm{cs}^*$-character. On the other hand, for a sequential
topological group $G$ with countable $\mathrm{cs}^*$-character we
have a more precise estimate
$\mathrm{sb}_\chi(G)\in\{1,\aleph_0,\mathfrak d\}$.

\begin{prb} Is $\mathrm{sb}_\chi(X)\in\{1,\aleph_0,\mathfrak d\}$ for
any sequential space $X$ with countable $\mathrm{cs}^*$-character?
\end{prb}

As we saw in Proposition~\ref{dyadic}, $\chi(X)\le\aleph_0$ for
any Fr\'echet-Urysohn compactum $X$ with
$\mathrm{cs}_\chi(X)\le\aleph_0$.

\begin{prb} Is $\mathrm{sb}_\chi(X)\le\aleph_0$ for any sequential
(scattered) compactum $X$ with $\mathrm{cs}_\chi(X)\le\aleph_0$?
\end{prb}

Now we pass to proofs of our results.

\section*{On sequence trees in topological groups}

Our basic instrument in proofs of main results is the concept of a
sequence tree. As usual, under a {\em tree} we understand a
partially ordered subset $(T,\le)$ such that for each $t\in T$ the
set $\downarrow t=\{\tau\in T:\sigma\le t\}$ is well-ordered by
the order $\le$. Given an element $t\in T$ let $\uparrow
t=\{\tau\in T:\tau\ge t\}$ and $\mathrm{succ}(t)=\min(\uparrow
t\setminus\{t\})$ be the set of successors of $t$ in $T$. A
maximal linearly ordered subset of a tree $T$ is called a {\em
branch} of $T$. By $\max T$ we denote the set of maximal elements
of the tree $T$.

\begin{dfn} Under a {\em sequence tree} in a topological space $X$ we understand a tree
$(T,\le)$ such that
\begin{itemize}
\item $T\subset X$;
\item $T$ has no infinite branch;
\item for each $t\notin\max T$ the set $\min(\uparrow t\setminus\{t\})$
of successors of $t$ is countable and converges to $t$.
\end{itemize}
\end{dfn}

Saying that a subset $S$ of a topological space $X$ converges to a
point $t\in X$ we mean that for each neighborhood $U\subset X$ of
$t$ the set $S\setminus U$ is finite.

The following lemma is well-known and can be easily proven by
transfinite induction (on the ordinal $s(a,A)=\min\{\alpha:a\in
A^{(\alpha)}\}$ for a subset $A$ of a sequential space and a point
$a\in\bar A$ from its closure)

\begin{lem}\label{l1} A point $a\in X$ of a sequential topological space
$X$ belongs to the closure of a subset $A\subset X$ if and only if
there is a sequence tree $T\subset X$ with $\min T=\{a\}$ and
$\max T\subset A$.
\end{lem}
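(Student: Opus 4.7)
The plan is to prove the two implications separately, each by transfinite induction. For the easy direction, suppose $T\subset X$ is a sequence tree with $\min T=\{a\}$ and $\max T\subset A$. Because $T$ has no infinite branch, the reverse order on $T$ is well-founded, so one can define a rank $r:T\to\mathrm{Ord}$ by $r(t)=0$ for $t\in\max T$ and $r(t)=\sup\{r(s)+1:s\in\mathrm{succ}(t)\}$ otherwise. By transfinite induction on $r(t)$, every $t\in T$ belongs to $\mathrm{cl}_X(A)$: leaves lie in $A$ by hypothesis, while for an internal $t$ the successor set $\mathrm{succ}(t)$ converges to $t$ and lies in $\mathrm{cl}_X(A)$ by the inductive hypothesis, so $t\in\mathrm{cl}_X(\mathrm{cl}_X(A))=\mathrm{cl}_X(A)$. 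Applied to the root this gives $a\in\mathrm{cl}_X(A)$.

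For the nontrivial direction, assume $a\in\mathrm{cl}_X(A)$. Since $X$ is sequential, $\mathrm{cl}_X(A)=A^{(\mathrm{so}(X))}$, so the ordinal $s(a,A)=\min\{\alpha:a\in A^{(\alpha)}\}$ is well-defined, and I induct on it. The base $s(a,A)=0$ (equivalently $a\in A$) is handled by the one-point tree $T=\{a\}$. For $s(a,A)=\alpha>0$, the definition $A^{(\alpha)}=\bigcup_{\beta<\alpha}(A^{(\beta)})^{(1)}$ furnishes $\beta_0<\alpha$ and a sequence $(a_n)\subset A^{(\beta_0)}$ converging to $a$; after thinning I may assume the $a_n$ are pairwise distinct and all different from $a$. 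Each $a_n$ satisfies $s(a_n,A)\le\beta_0<\alpha$, so the inductive hypothesis delivers a sequence tree $T_n$ with $\min T_n=\{a_n\}$ and $\max T_n\subset A$, and I set $T=\{a\}\cup\bigcup_n T_n$ with $a$ placed strictly below every other element and the given order preserved inside each $T_n$. The three defining properties of a sequence tree then follow directly from the construction, and strict decrease of $s(a_n,A)<s(a,A)$ along every branch guarantees termination with leaves in $A$.

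The one genuine obstacle is arranging that the pieces $T_n$ are pairwise disjoint and all miss $a$, for otherwise the partial order on the union would not be well-defined as a subset of $X$. I would handle this by strengthening the inductive hypothesis to read: for every $a\in A^{(\alpha)}$ and every open neighborhood $V\ni a$ there is a sequence tree with root $a$, leaves in $A$, and all of its nodes contained in $V$. Granted this, in the inductive step one chooses pairwise disjoint open neighborhoods $V_n\ni a_n$ missing $a$ (exploiting that the terms of the convergent sequence $(a_n)$ can be separated from one another and from $a$ in the ambient space), and applies the strengthened hypothesis inside each $V_n$ to obtain $T_n\subset V_n$, which forces the desired disjointness automatically. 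This is the only nonroutine point; everything else reduces to bookkeeping with the ordinal rank $s(\cdot,A)$ and the definition of $A^{(\alpha)}$.
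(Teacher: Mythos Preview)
Your proof follows exactly the route the paper indicates---transfinite induction on $s(a,A)=\min\{\alpha:a\in A^{(\alpha)}\}$---and supplies the details the paper omits; the paper gives no argument beyond that one-line hint.

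One remark on your disjointness fix. The claim that the terms $a_n$ of a convergent sequence can be separated by pairwise disjoint open neighborhoods missing $a$ is valid in Hausdorff spaces (separate each $a_n$ from the compact set $\{a\}\cup\{a_m:m\ne n\}$ by disjoint open $U_n\ni a_n$ and $W_n\supset\{a\}\cup\{a_m:m\ne n\}$, then put $V_n=U_n\cap\bigcap_{m<n}W_m$), but it can fail in bare $T_1$-spaces, which is the paper's blanket hypothesis. This is immaterial for the paper's applications, since every invocation of the lemma occurs inside a topological group, hence in a Tychonoff space; but if you want the statement in full $T_1$ generality you should either record the Hausdorff assumption explicitly or relax ``$T\subset X$'' to an abstract tree equipped with a map into $X$ (the proofs of the subsequent lemmas use only the tree structure and the convergence of successor sets, never injectivity of the labeling).
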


For subsets $A,B$ of a group $G$ let $A^{-1}=\{x^{-1}:x\in
A\}\subset G$ be the inversion of $A$ in $G$ and $AB=\{xy:x\in
A,\; y\in B\}\subset G$ be the product of $A,B$ in $G$. The
following two lemmas will be used in the proof of
Theorem~\ref{main}.

\begin{lem}\label{l2} A sequential
subspace $F\subset X$ of a topological group $G$ is first
countable if the subspace $F^{-1}F\subset G$ has countable
$\mathrm{sb}$-character at the unit $e$ of the group $G$.
\end{lem}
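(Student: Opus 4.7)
The plan is to prove that $F$ is Fr\'echet-Urysohn with countable $\mathrm{sb}$-character; then by Proposition~\ref{prop1}(2), $\chi(F)=\mathrm{sb}_\chi(F)\le\aleph_0$, so $F$ is first countable.

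Fix a countable $\mathrm{sb}$-network $\{N_n:n\in\omega\}$ at $e$ in $F^{-1}F$, and after replacing $N_n$ by $\bigcap_{k\le n}N_k$, assume the chain is decreasing. For each $x\in F$, the left-translation $y\mapsto x^{-1}y$ is a homeomorphism of $F$ onto $x^{-1}F\subset F^{-1}F$ sending $x$ to $e$, so the family $\{xN_n\cap F:n\in\omega\}$ is a countable $\mathrm{sb}$-network at $x$ in $F$; hence $\mathrm{sb}_\chi(F)\le\aleph_0$.

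For the Fr\'echet-Urysohn property, fix $A\subset F$ with $x\in\overline{A}\cap F$ and argue by transfinite induction on the sequential rank $\rho(x,A)=\min\{\alpha:x\in A^{(\alpha)}\}$ that some sequence in $A$ converges to $x$. The case $\rho(x,A)\le 1$ is immediate. Otherwise, pick $(x_i)_{i\in\omega}\subset F$ with $x_i\to x$ and $\rho(x_i,A)<\rho(x,A)$; the inductive hypothesis yields sequences $(y_{i,j})_{j\in\omega}\subset A$ with $y_{i,j}\to x_i$. Since $x_i^{-1}y_{i,j}\to e$ in $F^{-1}F$ and $N_i$ is a sequential barrier at $e$, I choose $f(i)\in\omega$ with $x_i^{-1}y_{i,f(i)}\in N_i$. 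To see that the diagonal sequence $(y_{i,f(i)})$ converges to $x$, take an arbitrary open neighborhood $W$ of $e$ in $G$; continuity of multiplication at $(e,e)$ provides an open $V\ni e$ with $V\cdot V\subset W$; the network property of $\{N_n\}$ in $F^{-1}F$ furnishes an $n_0$ with $N_{n_0}\subset V\cap F^{-1}F\subset V$; then for sufficiently large $i\ge n_0$ one has both $x^{-1}x_i\in N_{n_0}\subset V$ and $x_i^{-1}y_{i,f(i)}\in N_i\subset N_{n_0}\subset V$, so
\[x^{-1}y_{i,f(i)}=(x^{-1}x_i)(x_i^{-1}y_{i,f(i)})\in V\cdot V\subset W.\]
Consequently $y_{i,f(i)}\to x$ in $G$, and since $y_{i,f(i)},x\in F$, also in $F$, completing the induction.

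The crucial step is the diagonalization, and it rests on two ingredients that are both essential: the continuity of multiplication at $(e,e)$ in $G$ (producing open $V$ with $V\cdot V\subset W$) and the countable $\mathrm{sb}$-network at $e$ in $F^{-1}F\supset x^{-1}F$. That neither can be dropped is illustrated by the Arens space $S_2$, which is sequential and has $\mathrm{sb}_\chi(S_2)=\aleph_0$ yet $\chi(S_2)=\mathfrak d$; the group structure is precisely what lets us combine nearby barriers via the identity $x^{-1}y_{i,f(i)}=(x^{-1}x_i)(x_i^{-1}y_{i,f(i)})$.
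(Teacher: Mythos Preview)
Your proof is correct and takes a genuinely different route from the paper's. The paper argues directly that each $N_n\cap F$ is a \emph{neighborhood} of $e$ in $F$ (not merely a sequential barrier): it first establishes the squaring property that for every $n$ there is $m>n$ with $N_m^2\cap F^{-1}F\subset N_n$, and then, assuming some $N_{n_0}\cap F$ fails to be a neighborhood, invokes the sequence-tree Lemma~\ref{l1} and builds an infinite branch of the resulting tree via that squaring property, a contradiction. Your decomposition into ``countable $\mathrm{sb}$-character'' plus ``Fr\'echet--Urysohn'' is conceptually cleaner and sidesteps the sequence-tree machinery, replacing it with an explicit transfinite induction on the sequential rank and a diagonal selection; you then finish by quoting Proposition~\ref{prop1}(2). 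The paper's approach buys the stronger intermediate conclusion that the sets $N_n\cap F$ themselves form a neighborhood base at $e$, whereas yours only shows they form an $\mathrm{sb}$-network (which, a posteriori, is the same once Fr\'echet--Urysohn is known). Both arguments exploit the group structure at the same essential point---factoring through $e$---but you use open $V\subset G$ with $V\cdot V\subset W$, while the paper uses the internal relation $N_m^2\cap F^{-1}F\subset N_n$ among the barriers.
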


\begin{proof}
Our proof starts with the observation that it is sufficient to
consider the case $ e\in F $ and prove that $ F $ has countable
character at $ e $.

Let $\{S_n :n\in\omega\}$ be a decreasing $\mathrm{sb}$-network at
$ e $ in $ F^{-1} F $. First we show that for every $ n\in\omega $
there exists $ m>n $ such that $ S_{m}^{2}\cap (F^{-1}F)\subset
S_n $. Otherwise, for every $ m\in\omega $ there would exist $ x_m
,y_m \in S_m $ with $x_m y_m\in (F^{-1}F)\setminus S_n$. Taking
into account that $\lim_{m\rightarrow \infty } x_m =
\lim_{m\rightarrow\infty } y_m =e $, we get
$\lim_{m\rightarrow\infty}x_m y_m =e $. Since $S_n$ is a
sequential barrier at $e$, there is a number $m$ with $x_my_m\in
S_n$, which contradicts to the choice of the points $x_m,y_m$.

Now let us show that for all $ n\in\omega $ the set $ S_n \cap F $
is a neighborhood of $ e $ in $ F $. Suppose, conversely, that
$e\in\mathrm{cl}_F(F\setminus S_{n_0}) $ for some $ n_0 \in\omega
$.

By Lemma \ref{l1} there exists a sequence tree $T\subset F $,
$\min T=\{e\}$ and $\max T\subset F\setminus S_{n_0} $. To get a
contradiction we shall construct an infinite branch of $ T $. Put
$ x_0 =e $ and let $ m_0 $ be the smallest integer such that $
S_{m_0}^2 \cap F^{-1}F\subset S_{n_0}$.

By induction, for every $i\ge 1$ find a number $m_i>m_{i-1}$ with
$S_{m_i}^2\cap F^{-1}F\subset S_{m_{i-1}}$ and a point $x_i\in
\mathrm{succ}(x_{i-1})\cap (x_{i-1}S_{m_i})$. To show that such a
choice is always possible, it suffices to verify that
$x_{i-1}\notin \max T$. It follows from the inductive construction
that $x_{i-1}\in F\cap (S_{m_0}\cdots S_{m_{i-1}})\subset F\cap
S_{m_0}^2\subset S_{n_0}$ and thus $x_{i-1}\notin \max T$ because
$\max T\subset F\setminus S_{n_0}$.

Therefore we have constructed an infinite branch
$\{x_i:i\in\omega\}$ of the sequence tree $T$ which is not
possible. This contradiction finishes the proof.
\end{proof}

\begin{lem}\label{l3} A sequential $\alpha_7$-subspace $F$ of a topological
group $G$ has countable $\mathrm{sb}$-character provided the
subspace $F^{-1}F\subset G$ has countable $\mathrm{cs}$-character
at the unit $e$ of $G$.
\end{lem}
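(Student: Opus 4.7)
Following the opening move of the proof of Lemma~\ref{l2}, I would use translation invariance of the group multiplication to reduce the problem to producing a countable sb-network at the unit $e\in F$. The plan is to fix a countable cs-network $\{N_k:k\in\omega\}$ at $e$ in $F^{-1}F$ and, after passing to the countable family of finite intersections, assume it is closed under finite intersections with $e\in N_k$ for each $k$. I propose as sb-network at $e$ in $F$ the countable family
\[
\mathcal B=\{N_k\cap F:k\in\omega,\; N_k\cap F\text{ is a sequential barrier at $e$ in $F$}\},
\]
whose members are barriers by construction; the task reduces to showing $\mathcal B$ is a network at $e$ in $F$.

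\textbf{Main argument via $\alpha_7$ and a sequence tree.} Suppose toward contradiction that some open $V\subseteq G$ with $V\cap F$ a neighborhood of $e$ in $F$ admits no $N_k\subseteq V$ with $N_k\cap F\in\mathcal B$; then each such $N_k$ witnesses a sequence $T_k\subseteq F\setminus N_k$ convergent to $e$ in $F$. Applying the $\alpha_7$-property of $F$ to the family $\{T_k\}$ yields a sequence $S\subseteq F$ convergent to some $y\in F$ with $S\cap T_k\ne\emptyset$ for infinitely many $k$. Left-translating by $y^{-1}$ places $y^{-1}S$ inside $F^{-1}F$, where it converges to $e$, so the cs-network at $e$ in $F^{-1}F$ supplies some $N_{k_0}$ inside any prescribed small neighborhood of $e$ containing almost every $y^{-1}s$ for $s\in S$. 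From this I would deduce $e\in\mathrm{cl}_F(F\setminus(V\cap F))$ and, invoking Lemma~\ref{l1}, obtain a sequence tree $T\subseteq F$ with $\min T=\{e\}$ and $\max T\subseteq F\setminus(V\cap F)$. I then plan to mimic the inductive branch-extension argument from the proof of Lemma~\ref{l2}: set $x_0=e$ and, at stage $i\ge 1$, apply the cs-network to $x_{i-1}^{-1}\mathrm{succ}(x_{i-1})\to e$ in $F^{-1}F$, selecting $N_{k_i}$ small enough that the telescoping product $N_{k_1}\cdots N_{k_i}\cap F^{-1}F$ remains inside a fixed small neighborhood of $e$ in $F^{-1}F$, and picking a successor $x_i\in\mathrm{succ}(x_{i-1})\cap x_{i-1}N_{k_i}$. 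Telescoping will yield $x_i\in V\cap F$ for all $i$, contradicting $\max T\subseteq F\setminus(V\cap F)$ and so producing an infinite branch of a sequence tree, which is impossible.

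\textbf{Main obstacle.} The central difficulty relative to Lemma~\ref{l2} is that cs-network elements are calibrated to individual convergent sequences rather than serving as uniform sequential barriers, so the barrier-product inclusion $S_m^2\cap F^{-1}F\subseteq S_n$ available in Lemma~\ref{l2} has no direct analogue. The $\alpha_7$-property is what replaces it, by merging countably many obstruction sequences into one convergent sequence, but only at the cost of allowing the merged limit to be some $y\in F$ distinct from $e$. The group structure of $G$ is indispensable at exactly this point: left multiplication by $y^{-1}$ transports the merged sequence into $F^{-1}F$ with limit $e$, making the cs-network applicable again. Coordinating the inductive successor choices along the sequence tree with the countable enumeration of $\{N_k\}$ — so that at each stage $x_{i-1}\notin\max T$ and the telescoped product remains small — is the delicate technical core I expect to be the hardest part of the proof.
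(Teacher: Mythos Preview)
Your proposal has a genuine gap at the pivot into the sequence-tree argument, and a second structural gap in the tree argument itself.

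First, the claim ``$e\in\mathrm{cl}_F(F\setminus(V\cap F))$'' is simply false: you have chosen $V$ so that $V\cap F$ is an open neighborhood of $e$ in $F$, hence $e\notin\mathrm{cl}_F(F\setminus(V\cap F))$, and Lemma~\ref{l1} yields no tree. The $\alpha_7$ step you perform just before this does not help: the witness sequences $T_k$ converge to $e$, so almost all of their terms lie inside $V\cap F$, and nothing you have done produces points of $F\setminus V$ accumulating at $e$. In the paper's argument the tree is built not inside $F\setminus V$ but inside the closure $\bar X$ of a carefully constructed discrete set $X=\{x_{k,i}\}\subset F$ of witness points, and the tree is used to prove that $e$ is \emph{isolated} in $\bar X\setminus X$ (a structural fact), not to manufacture an infinite branch directly.

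Second, even granting a tree, your telescoping step fails because you have only closed $\{N_k\}$ under finite intersections. With a cs-network you cannot ``select $N_{k_i}$ small enough'' at will; you can only select one adapted to a given convergent sequence, and there is no reason the product $N_{k_1}\cdots N_{k_i}$ stays inside any prescribed neighborhood. The paper handles this by closing the countable family $\mathcal A$ under group products in $G$ from the outset: then $C_0\cdots C_n\in\mathcal A$, and the smallness control comes from a pre-built nested system of neighborhoods $U_k$ with $U_k^2\subset U_{k-1}$, into which the $C_i$ are placed. This closure under products is the exact replacement for the barrier inclusion $S_m^2\cap F^{-1}F\subset S_n$ of Lemma~\ref{l2} that you correctly identify as missing.

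Finally, the order of the two main tools is reversed in your plan. In the paper the tree argument comes first (to isolate $e$ in $\bar X\setminus X$), and the $\alpha_7$-property is applied \emph{last}: once $e$ is isolated, a closed neighborhood $W$ cuts out a closed set $V=(\{e\}\cup X)\cap W$ in $F$, the sequences $S_n=W\cap\{x_{n,i}\}$ are fed to $\alpha_7$, and the resulting convergent sequence $S$ is forced to have limit $e$ (since $V$ is closed and $X$ is discrete), which then contradicts the cs-network property because $S$ meets infinitely many $S_n$ yet must be almost contained in some $A_n$ that the later $S_m$ avoid. Your use of $\alpha_7$ in the middle leaves its conclusion (a sequence converging to some $y\ne e$) hanging with no clear role.
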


\begin{proof}
Suppose that $ F\subset G $ is a sequential $\alpha_7$-space with
$\mathrm{cs}_\chi(F^{-1}F,e)\le\aleph_0$.  We have to prove that
$\mathrm{sb}_\chi(F,x)\le\aleph_0$ for any point $x\in F$.
 Replacing $F$ by $Fx^{-1}$, if necessary, we can assume that
 $ x=e $ is the unit of the group $G$.
 Fix a countable family $\mathcal{A}$ of subsets of $G$
closed under group products in $G$, finite unions and finite
intersections, and such that $F^{-1}F\in\mathcal A$ and
$\mathcal{A}|F^{-1}F=\{A\cap (F^{-1}F):A\in\mathcal{A}\}$ is a
$\mathrm{cs}$-network at $ e $ in $ F^{-1}F$.
We claim that the collection $\mathcal{A}|F=\{A\cap F:A\in\mathcal
A\}$ is a $\mathrm{sb}$-network at $e$ in $F$.

Assuming the converse, we would find an open neighborhood $
U\subset G $ of $ e $ such that for any element $ A\in\mathcal{A}$
with $ A\cap F\subset U $ the set $A\cap F $ fails to be a
sequential barrier at $ e $ in $ F $.

Let $\mathcal{A}'=\{A\in\mathcal{A}:A\subset F\cap U\}=\{A_n
:n\in\omega\}$ and $ B_n =\bigcup_{k\le n}A_{k}$. Let $m_{-1}=0$
and $U_{-1}\subset U$ be any closed neighborhood of $e$ in $G$. By
induction, for every $k\in\omega$ find a number $m_k>m_{k-1}$, a
closed neighborhood $U_k\subset U_{k-1}$ of $e$ in $G$, and a
sequence $(x_{k,i})_{i\in\omega}$ convergent to $e$ so that the
following conditions are satisfied:
\smallskip

(i) $\{x_{k,i}:i\in\omega\}\subset U_{k-1}\cap F\setminus
B_{m_{k-1}}$;

(ii) the set $F_k=\{x_{n,i}: n\le k,\; i\in\omega\}\setminus
B_{m_k}$ is finite;

(iii) $U_k\cap(F_k\cup\{x_{i,j}:i,j\le k\})=\emptyset$ and
$U_k^2\subset U_{k-1}$.
\smallskip

The last condition implies that $U_0U_1\cdots U_k\subset U$ for
every $k\ge 0$.

Consider the subspace $X=\{x_{k,i}:k,i\in\omega\}$ of $F$ and
observe that it is discrete (in itself). Denote by $\bar X$ the
closure of $X$ in $F$ and observe that $\bar X\setminus X$ is
closed in $F$. We claim that $e$ is an isolated point of $\bar
X\setminus X$. Assuming the converse and applying Lemma~\ref{l1}
we would find a sequence tree $T\subset \bar X$ such that $\min
T=\{e\}$, $\max T\subset X$, and $\mathrm{succ}(e)\subset \bar
X\setminus X$.

By induction, construct a (finite) branch $(t_i)_{i\le n+1}$ of
the tree $T$ and a sequence $\{C_i:i\le n\}$ of elements of the
family $\mathcal A$ such that $t_0=e$,
$|\mathrm{succ}(t_i)\setminus t_iC_i|<\aleph_0$ and $C_i\subset
U_i\cap (F^{-1}F)$, $t_{i+1}\in\mathrm{succ}(t_i)\cap t_iC_i$, for
each $i\le n$. Note that the infinite set
$\sigma=\mathrm{succ}(t_n)\cap t_nC_n\subset X$ converges to the
point $t_n\ne e$.

On the other hand, $\sigma\subset t_nC_n\subset
t_{n-1}C_{n-1}C_n\subset\dots\subset t_0C_0\cdots C_n\subset
U_0\cdots U_n\subset U$. It follows from our assumption on
$\mathcal A$ that $C_0\cdots C_n\in \mathcal A$ and thus
$(C_0\cdots C_n)\cap F\subset B_{m_k}$ for some $k$. Consequently,
$\sigma\subset X\cap B_{m_k}$ and $\sigma\subset\{x_{j,i}:j\le
k,\;i\in\omega\}$ by the item (i) of the construction of $X$.
Since $e$ is a unique cluster point of the set $\{x_{j,i}:j\le
k,\; i\in\omega\}$, the sequence $\sigma$ cannot converge to
$t_n\ne e$, which is a contradiction.

Thus $e$ is an isolated point of $\bar X\setminus X$ and
consequently, there is a closed neighborhood $W$ of $e$ in $G$
such that the set $V=(\{e\}\cup X)\cap W$ is closed in $F$.

For every $n\in\omega$ consider the sequence
$S_n=W\cap\{x_{n,i}:i\in\omega\}$ convergent to $e$. Since $F$ is
an $\alpha_7$-space, there is a convergent sequence $S\subset F$
such that $S\cap S_n\ne\emptyset$ for infinitely many sequences
$S_n$. Taking into account that $V$ is a closed subspace of $F$
with $|V\cap S|=\aleph_0$, we conclude that the limit point $\lim
S$ of $S$ belongs to the set $V$. Moreover, we can assume that
$S\subset V$. Since the space $X$ is discrete, $\lim S\in
V\setminus X=\{e\}$. Thus the sequence $S$ converges to $e$. Since
$\mathcal A'$ is a $\mathrm{cs}$-network at $e$ in $F$, there is a
number $n\in\omega$ such that $A_n$ contains almost all members of
the sequence $S$. Since $S_m\cap (S_k\cup A_n)=\emptyset$ for $m>
k\ge n$, the sequence $S$ cannot meet infinitely many sequences
$S_m$. But this contradicts to the choice of $S$.
\end{proof}

Following \cite[\S8]{vD} by $\mathbb L$ we denote the countable
subspace of the plane $\mathbb{R}^2$: $$\mathbb
L=\{(0,0),(\tfrac1n,\tfrac1{nm}):n,m\in\mathbb{N}\}\subset\mathbb{R}^2.$$
The space $\mathbb L$ is locally compact at each point except for
$(0,0)$. Moreover, according to Lemma 8.3 of \cite{vD}, a first
countable space $X$ contains a closed topological copy of the
space $\mathbb L$ if and only if $X$ is not locally compact.

The following important lemma was proven in \cite{Ba1} for normal
sequential groups.

\begin{lem}\label{l4} If a sequential topological group $G$ contains a closed copy of
the space $\mathbb L$, then $G$ is an $\alpha_7$-space.
\end{lem}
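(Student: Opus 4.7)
I plan to argue by contradiction using the characterization (noted just before Lemma~\ref{l4}) that a sequential space is an $\alpha_7$-space if and only if it contains no closed copy of the sequential fan $S_\omega$. So assume $G$ contains both a closed copy $L \cong \mathbb{L}$ and a closed copy $F \cong S_\omega$, and derive a contradiction. After left-translating each copy by the inverse of its distinguished point, I can arrange that $e$ is simultaneously the non-isolated point of $L = \{e\} \cup \{\ell_{n,m} : n, m \in \mathbb{N}\}$ and the apex of $F = \{e\} \cup \bigcup_n F_n$ with $F_n = \{f_{n,k} : k \in \mathbb{N}\}$. Two structural consequences are then available: every ``diagonal'' $(\ell_{n, m(n)})_n$ converges to $e$ in $G$, while each ``row'' $L_n = \{\ell_{n, m} : m \in \mathbb{N}\}$ is closed discrete in $G$ (as $\mathbb{L}$'s row has no accumulation and $L$ is closed in $G$); and although the rows $F_n$ converge to $e$, the $S_\omega$-structure of the closed subspace $F$ forbids any sequence meeting infinitely many rows $F_n$ from converging to $e$ in $G$.

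The main construction is to form, for each $n$, the convergent sequence $T_n = \ell_{n,1} \cdot F_n = \{\ell_{n,1}f_{n,k} : k \in \mathbb{N}\}$ which converges to $\ell_{n, 1}$ by continuity of multiplication, and to set $A = \bigcup_n T_n$. A routine neighbourhood chase shows $e \in \overline A$ in $G$: for any open $U \ni e$, choose $n$ large enough that $\ell_{n,1} \in U$ (possible by the diagonal convergence in $L$); then $\ell_{n,1}^{-1}U$ is a neighbourhood of $e$, so $F_n \to e$ gives $f_{n,k} \in \ell_{n,1}^{-1}U$ for large $k$, whence $\ell_{n,1}f_{n,k} \in T_n \cap U$. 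Since $G$ is sequential, Lemma~\ref{l1} supplies a sequence tree $\mathcal{T} \subset G$ rooted at $e$ whose leaves lie in $A$. The target is to extract from $\mathcal{T}$, by an inductive refinement similar in spirit to the arguments of Lemmas~\ref{l2} and \ref{l3}, a genuine sequence $(a_j)_j \subset A$ with $a_j \to e$ in $G$, $a_j \in T_{n_j}$, and $n_j \to \infty$. Once this is achieved, writing $a_j = \ell_{n_j, 1} f_{n_j, k_j}$ and using $\ell_{n_j,1}^{-1} \to e$ (diagonal convergence in $L$) together with joint continuity of multiplication, one obtains $f_{n_j, k_j} = \ell_{n_j,1}^{-1} a_j \to e$ in $G$; this is a sequence in the closed copy $F$ meeting infinitely many rows $F_n$ yet converging to the apex $e$, contradicting the $S_\omega$-structure of $F$.

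The hardest step will be the extraction of the desired convergent sequence through $A$, for in general the sequentiality of $G$ only places $e$ in some higher iterate $A^{(\alpha)}$ of the sequential closure rather than in $A^{(1)}$ directly, and a straightforward ``diagonal through the tree'' need not converge to $e$. I expect to handle this by performing the inductive refinement so that at each level the chosen successors are indexed by elements of $L$ multiplied with elements of $F$, then showing that an infinite descent in the refinement would yield, after inverting the accumulated $L$-factor, a bounded sequence of points of $F$ forced to accumulate at a point where the $S_\omega$-structure forbids it --- or, alternatively, would force one of the rows $L_n$ of $L$ to accumulate in $G$, contrary to its closed discreteness. This rigidity inherited from the closed embedding of $\mathbb{L}$ into the topological group is precisely the algebraic-topological mechanism that distinguishes the group setting from arbitrary sequential spaces and should power the proof.
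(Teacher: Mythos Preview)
Your approach has a real gap at the step you flag as hardest, and it is a gap of strategy rather than technique: with your choice $T_n=\ell_{n,1}F_n$, the extraction you want is impossible under your own contradiction hypothesis. Each $T_n$ is compact, converging to $\ell_{n,1}$. If some $a_j=\ell_{n_j,1}f_{n_j,k_j}\to a$ in $G$ with $n_j\to\infty$, then $\ell_{n_j,1}\to e$ forces $f_{n_j,k_j}\to a\in F$; but a sequence in $S_\omega$ with unbounded row index cannot converge (the apex is ruled out by the fan topology, every other point is isolated). Thus, assuming $F\cong S_\omega$ is closed in $G$, every convergent sequence in $A=\bigcup_nT_n$ has bounded row index and can only limit to some $\ell_{n,1}$. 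One then checks that $A\cup\{\ell_{n,1}:n\}\cup\{e\}$ is sequentially closed, so $e\in A^{(2)}\setminus A^{(1)}$; no sequence-tree refinement will manufacture a sequence in $A$ converging to $e$ (or to anything with $n_j\to\infty$), and your construction is perfectly consistent with the hypothesis --- no contradiction arises.

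The missing idea is to use the full rows of $\mathbb L$, not just one point per row. The paper argues directly from the definition of $\alpha_7$: given arbitrary sequences $(y_{n,m})_m\to e$, it sets $D_n=\{x_{n,m}\,y_{n,m}:m\in\mathbb N\}$ using the \emph{entire} $n$-th row $\{x_{n,m}:m\}$ of the embedded $\mathbb L$. Because that row is closed discrete in $G$ and $y_{n,m}\to e$, each $D_n$ is closed discrete rather than compact. Now $x_0$ lies in the closure of $\bigcup_nD_n$ but not in it, sequentiality yields a sequence $S$ in the union converging to some $a$, and closed discreteness of each $D_n$ forces $S$ to traverse infinitely many $D_n$. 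Thinning gives $n_i\to\infty$, whence $x_{n_i,m_i}\to x_0$ by the $\mathbb L$-structure and $y_{n_i,m_i}\to x_0^{-1}a$: this is the $\alpha_7$ witness, with no contradiction argument, no requirement that the limit be $e$, and no sequence tree. If you prefer your $S_\omega$-contradiction framing, replacing $T_n$ by $\{\ell_{n,m}f_{n,m}:m\}$ makes your argument work in one stroke.
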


\begin{proof} Let $h:\mathbb L\to G$ be a closed embedding
and let $x_0=h(0,0)$, $x_{n,m}=h(\frac1n,\frac1{nm})$ for
$n,m\in{\mathbb N}$. To show that $G$ is an $\alpha_7$-space, for
every $n\in{\mathbb N}$ fix a sequence $(y_{n,m})_{m\in{\mathbb
N}}\subset G$, convergent to the unit $e$ of $G$. Denote by
$\ast:G\times G\to G$ the group operation on $G$.

It is easy to verify that for every $n$ the subspace
$D_n=\{x_{n,m}\ast y_{n,m}:m\in\mathbb{N} \}$ is closed and
discrete in $G$.  Hence there exists $ k_{n}\in\mathbb{N} $ such
that $x_0\not=x_{n,m}\ast y_{n,m}$ for all $ m>k_{n}$. Consider
the subset $$A=\{x_{n,m}\ast y_{n,m}:n>0,\; m>k_n\}$$ and using
the continuity of the group operation, show that $ x_{0}\not\in A
$ is a cluster point of $A$ in $G$. Consequently, the set $A$ is
not closed and by the sequentiality of $G$, there is a sequence
$S\subset A$ convergent to a point $a\notin A$. Since every space
$D_n$ is closed and discrete in $G$, we may replace $S$ by a
subsequence, and assume that $|S\cap D_n|\le 1$ for every $n\in
{\mathbb N}$. Consequently, $S$ can be written as
$S=\{x_{n_i,m_i}\ast y_{n_i,m_i}:i\in\omega\}$ for some number
sequences $(m_i)$ and $(n_i)$ with $n_{i+1}>n_i$ for all $i$. It
follows that the sequence $(x_{n_i,m_i})_{i\in\omega}$ converges
to $x_0$ and consequently, the sequence
$T=\{y_{n_i,m_i}\}_{i\in\omega}$ converges to $x_0^{-1}\ast a$.
Since $T\cap \{y_{n_i,m}\}_{m\in{\mathbb N}}\ne\emptyset$ for
every $i$, we conclude that $G$ is an $\alpha_7$-space.
\end{proof}

Lemma~\ref{l4} allows us to prove the following unexpected

\begin{lem}\label{l5} A non-metrizable sequential topological group $G$ with countable
$\mathrm{cs}$-character has a countable $\mathrm{cs}$-network at
the unit, consisting of closed countably compact subsets of $G$.
\end{lem}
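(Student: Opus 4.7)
My plan is to first exploit the hypotheses via Lemmas~\ref{l2}--\ref{l4} to rule out a closed topological copy of the space $\mathbb{L}$ inside $G$, and then to form the desired network as the family of closures of an arbitrary countable cs-network at~$e$.

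For the reduction: since first-countable topological groups are metrizable, $G$ cannot be first countable, so $\chi(G,e)>\aleph_0$. Applying Lemma~\ref{l2} contrapositively to $F=G$ (so that $F^{-1}F=G$) gives $\mathrm{sb}_\chi(G,e)>\aleph_0$. Combining this with $\mathrm{cs}_\chi(G,e)\le\aleph_0$ and Lemma~\ref{l3} contrapositively (again with $F=G$), I would conclude that $G$ is not an $\alpha_7$-space. Lemma~\ref{l4} contrapositively then shows that $G$ contains no closed topological copy of $\mathbb{L}$.

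For the network itself, I would fix any countable cs-network $\{M_n:n\in\omega\}$ at $e$ and define $C_n=\overline{M_n}$. That $\{C_n\}$ remains a cs-network at $e$ is routine: for any open neighborhood $U$ of $e$, regularity of the $T_1$ topological group $G$ allows one to interpose an open neighborhood $V$ of $e$ with $\overline{V}\subset U$; the cs-network property of $\{M_n\}$ then yields, for a prescribed sequence $S\to e$, some $M_n\subset V$ containing almost all terms of $S$, and then $C_n\subset\overline{V}\subset U$ still inherits this property.

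The main step, and also the main obstacle, is to show that each $C_n$ is countably compact. Assume towards a contradiction that some $C_n$ contains a countably infinite closed discrete subset $D=\{y_k:k\in\omega\}$ whose points are distinct from $e$. Because $y_k\in\overline{M_n}$, Lemma~\ref{l1} provides a sequence tree $T_k\subset G$ with $\min T_k=\{y_k\}$ and $\max T_k\subset M_n$ for each $k$. The plan is then to perform a diagonal refinement along the trees $T_k$ and, using the group multiplication to translate successor-sequences of $y_k$ by $y_k^{-1}$, to extract a countable family of points of $G$ whose union with $\{e\}$ realises a closed topological copy of $\mathbb{L}$ inside $G$ (with $e$ playing the role of $(0,0)$ and the translated branch ends playing the roles of the points $(1/n,1/(nm))$), contradicting the first paragraph. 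Arranging the refinement so that each ``row'' is discrete in itself, the whole family accumulates only at $e$, and the resulting set is genuinely closed in $G$ is the delicate point: it requires a careful simultaneous use of the sequence trees $T_k$, the countable cs-network at $e$, and the group operation, and this is where the technical weight of the argument lies.
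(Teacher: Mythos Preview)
Your reduction to ``$G$ contains no closed copy of $\mathbb L$'' via Lemmas~\ref{l2}--\ref{l4} is correct and matches the paper exactly, and your observation that the closures $C_n=\overline{M_n}$ still form a $\mathrm{cs}$-network at $e$ is fine. The problem is your main claim: it is \emph{not true} that every $C_n$ must be countably compact. Nothing prevents one of the $M_n$ from being, say, the whole group $G$ (or an open subgroup of $G$); adding such a set to a $\mathrm{cs}$-network keeps it a $\mathrm{cs}$-network, yet its closure is $G$ itself, which in the non-metrizable situation is an $\mathcal{MK}_\omega$-group such as $\mathbb R^\infty$ and certainly not countably compact. So no amount of ``diagonal refinement'' of your trees $T_k$ can produce a closed copy of $\mathbb L$ from an arbitrary non-countably-compact $C_n$: the points $y_k$ need have no relation whatsoever to $e$, and translating by $y_k^{-1}$ sends you into $y_k^{-1}M_n$, a set with no useful structure.

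The paper does not attempt to prove that all members of the network are countably compact; it proves instead that the countably compact members already suffice. Concretely: start with a countable $\mathrm{cs}$-network $\mathcal N$ of closed sets which is closed under finite intersections. Given a neighbourhood $U$ of $e$ and a sequence $(x_n)\to e$, enumerate as $\{A_k:k\in\omega\}$ all members of $\mathcal N$ lying in $U$ and containing almost all $x_n$. If for every $n$ the finite intersection $\bigcap_{k\le n}A_k$ fails to be countably compact, one inductively chooses countable closed discrete $K_n\subset\bigcap_{k\le k_n}A_k$ and neighbourhoods $W_n\ni e$ with $W_n\cap K_n=\emptyset$, using the $\mathrm{cs}$-network property to pick $A_{k_{n+1}}\subset W_n$. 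The nesting $K_{n+1}\subset A_{k_{n+1}}\subset W_n$ forces $\{e\}\cup\bigcup_n K_n$ to be a closed copy of $\mathbb L$, a contradiction. The essential point you are missing is that the copy of $\mathbb L$ comes from a \emph{nested} family of non-countably-compact sets shrinking to $e$, not from a single such set.
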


\begin{proof} Given a non-metrizable sequential group $G$ with
countable $\mathrm{cs}$-character we can apply
Lemmas~\ref{l2}--\ref{l4} to conclude that $G$ contains no closed
copy of the space $\mathbb L$. Fix a countable $\mathrm{cs}
$-network $\mathcal{N} $ at $ e $, closed under finite
intersections and consisting of closed subspaces of $ G $. We
claim that the collection $\mathcal C\subset\mathcal N$ of all
countably compact subsets $N\in\mathcal N$ forms a
$\mathrm{cs}$-network at $e$ in $G$.

To show this, fix a neighborhood $U\subset G$ of $e$ and a
sequence $(x_n)\subset G$ convergent to $e$. We must find a
countably compact set $M\in\mathcal N$ with $M\subset U$,
containing almost all points $x_n$. Let
$\mathcal{A}=\{A_k:k\in\omega\}$ be the collection of all elements
$N\subset U$ of $\mathcal N$ containing almost all points $x_n$.
Now it suffices to find a number $ n\in\omega $ such that the
intersection $M=\bigcap_{k\le n}A_k$ is countably compact. Suppose
to the contrary, that for every $ n\in\omega$ the set
$\bigcap_{k\le n}A_k$ is not countably compact. Then there exists
a countable closed discrete subspace $ K_{0}\subset A_{0}$ with
$K_0\not\ni e$. Fix a neighborhood $ W_{0}$ of $e$ with $
W_{0}\cap K_{0}=\emptyset $. Since $\mathcal N$ is a
$\mathrm{cs}$-network at $e$, there exists $ k_{1}\in\omega $ such
that $ A_{k_{1}}\subset W_{0}$.

It follows from our hypothesis that there is a countable closed
discrete subspace $K_{1}\subset\bigcap_{k\le k_1}A_k$ with $K_1\ni
e$. Proceeding in this fashion we construct by induction an
increasing number sequence $ (k_{n})_{n\in\omega}\subset\omega $,
a sequence $(K_{n})_{n\in\omega}$ of countable closed discrete
subspaces of $G$, and a sequence $ (W_{n})_{n\in\omega}$ of open
neighborhoods of $ e $ such that $K_{n}\subset\bigcap_{k\le
k_n}A_{k}$, $W_{n}\cap K_{n}=\emptyset$, and $A_{k_{n+1}}\subset
W_{n}$ for all $ n\in\omega $.

It follows from the above construction that $
\{e\}\cup\bigcup_{n\in\omega}K_n$ is a closed copy of the space $
\mathbb{L} $ which is impossible.
\end{proof}

\section*{Proofs of Main Results}

\subsection*{Proof of Proposition~\ref{prop1}} The first three items
can be easily derived from the corresponding definitions. To prove
the fourth item observe that for any $\mathrm{cs}^*$-network
$\mathcal N$ at a point $x$ of a topological space $X$, the family
$\mathcal N'=\{\cup\mathcal F:\mathcal F\subset\mathcal N\}$ is an
$\mathrm{sb}$-network at $x$.

The proof of fifth item is more tricky. Fix any
$\mathrm{cs}^*$-network $\mathcal N$ at a point $x\in X$ with
$|\mathcal N|\le\mathrm{cs}^*_\chi(X)$. Let
$\lambda=\mathrm{cof}(|\mathcal N|)$ be the cofinality of the
cardinal $|\mathcal N|$ and write $\mathcal
N=\bigcup_{\alpha<\lambda}\mathcal N_\alpha$ where $\mathcal
N_\alpha\subset\mathcal N_\beta$ and $|\mathcal
N_\alpha|<|\mathcal N|$ for any ordinals $\alpha\le\beta<\lambda$.
Consider the family $\mathcal M=\{\cup\mathcal C:\mathcal
C\in[\mathcal N_\alpha]^{\le\omega},\; \alpha<\lambda\}$ and
observe that $|\mathcal M|\le\lambda\cdot
\sup\{|[\kappa]^{\le\omega}|:\kappa<|\mathcal N|\}$ where
$[\kappa]^{\le\omega}=\{A\subset\kappa:|A|\le\aleph_0\}$. It rests
to verify that $\mathcal M$ is a $\mathrm{cs}$-network at $x$.

Fix a neighborhood $U\subset X$ of $x$ and a sequence $S\subset X$
convergent to $x$. For every $\alpha<\lambda$ choose a countable
subset $\mathcal C_\alpha\subset\mathcal N_\alpha$ such that
$\cup\mathcal C_\alpha\subset U$ and $S\cap(\cup\mathcal
C_\alpha)=S\cap(\cup\{N\in\mathcal N_\alpha: N\subset U\})$. It
follows that $\cup\mathcal C_\alpha\in\mathcal M$. Let
$S_\alpha=S\cap(\cup\mathcal C_\alpha)$ and observe that
$S_\alpha\subset S_\beta$ for $\alpha\le\beta<\lambda$. To finish
the proof it suffices to show that $S\setminus S_\alpha$ is finite
for some $\alpha<\lambda$. Then the element $\cup\mathcal
C_\alpha\subset U$ of $\mathcal M$ will contain almost all members
of the sequence $S$.

Separately, we shall consider the cases of countable and
uncountable $\lambda$. If $\lambda$ is uncountable, then it has
uncountable cofinality and consequently, the transfinite sequence
$(S_\alpha)_{\alpha<\lambda}$ eventually stabilizes, i.e., there
is an ordinal $\alpha<\lambda$ such that $S_\beta=S_\alpha$ for
all $\beta\ge\alpha$. We claim that the set $S\setminus S_\alpha$
is finite. Otherwise, $S\setminus S_\alpha$ would be a sequence
convergent to $x$ and there would exist an element $N\in\mathcal
N$ with $N\subset U$ and infinite intersection $N\cap (S\setminus
S_\alpha)$. Find now an ordinal $\beta\ge\alpha$ with
$N\in\mathcal N_\beta$ and observe that $S\cap N\subset
S_\beta=S_\alpha$ which contradicts to the choice of $N$.

If $\lambda$ is countable and $S\setminus S_\alpha$ is infinite
for any $\alpha<\lambda$, then we can find an infinite
pseudo-intersection $T\subset S$ of the decreasing sequence
$\{S\setminus S_\alpha\}_{\alpha<\lambda}$. Note that $T\cap
S_\alpha$ is finite for every $\alpha<\lambda$.
 Since sequence $T$  converges to $x$, there is an element
 $N\in\mathcal N$ such that $N\subset U$ and $N\cap T$ is infinite.
Find $\alpha<\lambda$ with $N\in\mathcal N_\alpha$ and observe
that $N\cap S\subset S_\alpha$. Then $N\cap T\subset N\cap T\cap
S_\alpha\subset T\cap S_\alpha$ is finite, which contradicts to
the choice of $N$.

\subsection*{Proof of Proposition~\ref{cs=cs*}} Let $X$ be a
topological space and fix a point $x\in X$.

(1) Suppose that $\mathrm{cs}^*_\chi(X)<\mathfrak p$ and fix a
$\mathrm{cs}^*$-network $\mathcal N$ at the point $x$ such that
$|\mathcal N|<\mathfrak p$. Without loss of generality, we can
assume that the family $\mathcal N$ is closed under finite unions.
We claim that $\mathcal N$ is a $\mathrm{cs}$-network at $x$.
Assuming the converse we would find a neighborhood $U\subset X$ of
$x$  and a sequence $S\subset X$ convergent to $x$ such that
$S\setminus N$ is infinite for any element $N\in\mathcal N$ with
$N\subset U$. Since $\mathcal N$ is closed under finite unions,
the family $\mathcal F=\{S\setminus N:N\in\mathcal N,\; N\subset
U\}$ is closed under finite intersections. Since $|\mathcal
F|\le|\mathcal N|<\mathfrak p$, the family $\mathcal F$ has an
infinite pseudo-intersection $T\subset S$. Consequently, $T\cap N$
is finite for any $N\in\mathcal N$ with $N\subset U$. But this
contradicts to the facts that $T$ converges to $x$ and $\mathcal
N$ is a $\mathrm{cs}^*$-network at $x$.

The items (2) and (3) follow from Propositions~\ref{prop1}(5) and
\ref{cs=cs*}(1). The item (4) follows from (1,2) and the
inequality $\chi(X)\le\mathfrak c$ holding for any countable
topological space $X$.

Finally, to derive (5) from (3) use the well-known fact that under
GCH, $\lambda^{\aleph_0}\le\kappa$ for any infinite cardinals
$\lambda<\kappa$, see \cite[9.3.8]{HJ}.

\subsection*{Proof of Theorem~\ref{main}} Suppose that $G$ is a
non-metrizable sequential group with countable
$\mathrm{cs}^*$-character. By Proposition~\ref{cs=cs*}(1),
$\mathrm{cs}_\chi(G)=\mathrm{cs}^*_\chi(G)\le\aleph_0$.

First we show that each countably compact subspace $ K $ of $ G $
is first-countable. The space $K$, being countably compact in the
sequential space $G$, is sequentially compact and so are the sets
$K^{-1}K$ and $(K^{-1}K)^{-1}(K^{-1}K)$ in $G$. The sequential
compactness of $K^{-1}K$ implies that it is an $\alpha_7$-space.
Since
$\mathrm{cs}_\chi((K^{-1}K)^{-1}(K^{-1}K))\le\mathrm{cs}_\chi(G)\le\aleph_0$
we may apply Lemmas~\ref{l3} and \ref{l2} to conclude that the
space $K^{-1}K$ has countable $\mathrm{sb}$-character and $K$ has
countable character.

Next, we show that $ G $ contains an open $\mathcal{MK}_\omega
$-subgroup. By Lemma \ref{l5}, $ G $ has a countable $\mathrm{cs}
$-network $\mathcal{K}$ consisting of countably compact subsets.
Since the group product of two countably compact subspaces in $ G
$ is countably compact, we may assume that $\mathcal{K}$ is closed
under finite group products in $G$. We can also assume that
$\mathcal{K} $ is closed under the inversion, i.e. $ K^{-1}\in
\mathcal{K} $ for any $ K\in\mathcal{K} $. Then $H=\cup K$ is a
subgroup of $G$. It follows that this subgroup is a sequential
barrier at each of its points, and thus is open-and-closed in $G$.
We claim that the topology on $ H $ is inductive with respect to
the cover $\mathcal{K} $. Indeed, consider some $ U\subset H $
such that $ U\cap K $ is open in $ K $ for every $ K\in\mathcal{K}
$. Assuming that $U$ is not open in $H$ and using the
sequentiality of $H$, we would find a point $ x\in U $ and a
sequence $ (x_n)_{n\in\omega}\subset H\setminus U $ convergent to
$ x $. It follows that there are elements $ K_1 ,K_2
\in\mathcal{K} $ such that $ x\in K_1 $ and $K_2$ contains almost
all members of the sequence $(x^{-1}x_n)$. Then the product $
K=K_1 K_2 $ contains almost all $x_n$ and the set $ U\cap K $,
being an open neighborhood of $x$ in $ K $, contains almost all
members of the sequence $(x_n)$, which is a contradiction.

As it was proved before each $ K\in\mathcal{K} $ is
first-countable, and consequently $ H $ has countable
pseudocharacter, being the countable union of first countable
subspaces. Then $H$ admits a continuous metric. Since any
continuous metric on a countably compact space generates its
original topology, every $ K\in\mathcal{K} $ is a metrizable
compactum, and consequently $H$ is an
$\mathcal{MK}_\omega$-subgroup of $ G $.

Since $H$ is an open subgroup of  $G$, $G$ is homeomorphic to
$H\times D$ for some discrete space $D$.

\subsection*{Proof of Theorem~\ref{cardinal}} Suppose $G$ is a
non-metrizable sequential topological group with countable
$\mathrm{cs}^*$-character. By Theorem~\ref{main}, $G$ contains an
open $\mathcal{MK}_\omega$-subgroup $H$ and is homeomorphic to the
product $H\times D$ for some discrete space $D$ . This implies
that $G$ has point-countable $k$-network. By a result of Shibakov
\cite{Shi}, each sequential topological group with point-countable
$k$-network and sequential order $<\omega_1$ is metrizable.
Consequently, $\mathrm{so}(G)=\omega_1$. It is clear that
$\psi(G)=\psi(H)\le \aleph_0$, $\chi(G)=\chi(H)$,
$\mathrm{sb}_\chi(G)=\mathrm{sb}_\chi(H)$ and
${ib}(G)=c(G)=d(G)=l(G)=e(G)={nw}(G)={knw}(G)=|D|\cdot\aleph_0$.

To finish the proof it rests to show that
$\mathrm{sb}_\chi(H)=\chi(H)=\mathfrak d$. It follows from
Lemmas~\ref{l2} and \ref{l3} that the group $H$, being
non-metrizable, is not $\alpha_7$ and thus contains a copy of the
sequential fan $S_\omega$. Then $\mathfrak
d=\chi(S_\omega)=\mathrm{sb}_\chi(S_\omega)\le\mathrm{sb}_\chi(H)\le\chi(H)$.
 To prove that
$\chi(H)\le\mathfrak d$ we shall apply a result of K.~Sakai
\cite{Sa} asserting that the space $\mathbb{R}^\infty\times Q$
contains a closed topological copy of each
$\mathcal{MK}_\omega$-space and the well-known equality
$\chi(\mathbb{R}^\infty\times Q)=\chi(\mathbb{R}^\infty)=\mathfrak
d$ (following from the fact that $\mathbb{R}^\infty$ carries the
box-product topology, see \cite[Ch.II, Ex.12]{Sch}).

\subsection*{Proof of Theorem~\ref{dowker}}
First we describe two general constructions producing
topologically homogeneous sequential spaces. For a locally compact
space $Z$ let $\alpha Z=Z\cup\{\infty\}$ be the one-point
extension of $Z$ endowed with the topology whose neighborhood base
at $\infty$ consists of the sets $\alpha Z\setminus K$ where $K$
is a compact subset of $Z$. Thus for a non-compact locally
compacts space $Z$ the space $\alpha Z$ is noting else but the
one-point compactification of $Z$. Denote by
$2^\omega=\{0,1\}^\omega$ the Cantor cube.

Consider the subsets
\begin{align*}\Xi(Z)=&\{(c,(z_i)_{i\in\omega})\in 2^\omega\times
(\alpha Z)^\omega: z_i=\infty\mbox{ for all but finitely many
indices $i\}$ and}\\ \Theta(Z)=&\{(c,(z_i)_{i\in\omega})\in
2^\omega\times (\alpha Z)^\omega: \exists n\in\omega\mbox{ such
that $z_i\ne\infty$ if and only if $i<n$}\}. \end{align*} Observe
that $\Theta(Z)\subset\Xi(Z)$.

Endow the set $\Xi(Z)$ (resp. $\Theta(Z)$) with the strongest
topology generating the Tychonov product topology on each compact
subset from the family $\mathcal{K}_\Xi$ (resp.
$\mathcal{K}_\Theta$), where
\smallskip

$\mathcal{K}_\Xi=\{2^\omega\times \prod_{i\in\omega}C_i: C_i\mbox{
are compact subsets of $\alpha Z$ and almost all
$C_i=\{\infty\}\}$;}$
\smallskip

$\mathcal{K}_\Theta=\{2^\omega\times \prod_{i\in\omega}C_i:\;
\exists i_0\in\omega$ such that $C_{i_0}=\alpha Z$,
$C_i=\{\infty\}$ for all $i>i_0$ and\newline \vskip-13pt

 {}\hskip120pt $C_i$ is a compact subsets of $Z$ for every
$i<i_0\}$.
\smallskip

\begin{lem}\label{homogeneous} Suppose $Z$ is a zero-dimensional
locally metrizable locally compact space. Then
\begin{enumerate}
\item the spaces $\Xi(Z)$ and $\Theta(Z)$ are topologically homogeneous;
\item $\Xi(Z)$ is a regular zero-dimensional $k_\omega$-space
while $\Theta(Z)$ is a totally disconnected $k$-space;
\item if $Z$ is Lindel\"of, then $\Xi(Z)$ and $\Theta(Z)$ are zero-dimensional
$\mathcal{MK}_\omega$-spaces with\newline
$\chi(\Xi(Z))=\chi(\Theta(Z))\le\mathfrak d$;
\item $\Xi(Z)$ and $\Theta(Z)$ contain copies of the space
$\alpha Z$ while $\Theta(Z)$ contains a closed copy of $Z$;
\item $\mathrm{cs}^*_\chi(\Xi(Z))=\mathrm{cs}^*_\chi(\Theta(Z))=\mathrm{cs}^*_\chi(\alpha Z)$,
$\mathrm{cs}_\chi(\Xi(Z))=\mathrm{cs}_\chi(\Theta(Z))=\mathrm{cs}_\chi(\alpha
Z)$,\newline $\mathrm{sb}_\chi(\Theta(Z))=\mathrm{sb}_\chi(\alpha
Z)$, and $\psi(\Xi(Z))=\psi(\Theta(Z))=\psi(\alpha Z)$;
\item the spaces $\Xi(Z)$ and $\Theta(Z)$ are sequential if and only if
$\alpha Z$ is sequential;
\item if $Z$ is not countably compact, then
$\Xi(Z)$ contains a closed copies of $S_2$ and $S_\omega$ and
$\Theta(Z)$ contains a closed copy of $S_2$.
\end{enumerate}
\end{lem}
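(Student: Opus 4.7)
My plan proves items (2)--(4) first (basic topological structure and embeddings), then uses them to derive (5)--(7), and treats homogeneity (1) separately at the end as the main obstacle. For (2)--(3), the key observation is that $\alpha Z$ is always a compact zero-dimensional Hausdorff space (the one-point compactification of locally compact zero-dimensional Hausdorff $Z$), and is compact metrizable exactly when $Z$ is Lindel\"of (by local metrizability plus $\sigma$-compactness). Each element of $\mathcal K_\Xi$ is then compact zero-dimensional, and the countable subfamily $\{K_n=2^\omega\times(\alpha Z)^n\times\{\infty\}^\omega:n\in\omega\}$ is cofinal in $\mathcal K_\Xi$; thus $\Xi(Z)$ carries the inductive topology with respect to this countable compact cover, giving the $k_\omega$ assertion of (2) and the $\mathcal{MK}_\omega$ assertion of (3). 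Regularity and zero-dimensionality follow from the corresponding properties of each $K_n$ together with the directed-limit structure. The bound $\chi(\Xi(Z))\le\mathfrak d$ in (3) is obtained from a neighborhood base at the base point $(0^\omega,\infty,\infty,\dots)$ indexed by $\omega^\omega$, exactly as in the classical computation for $\mathbb R^\infty$. The parallel argument with $\mathcal K_\Theta$ handles $\Theta(Z)$, except that the length-stratification destroys global zero-dimensionality, giving only total disconnectedness in (2); Lindel\"ofness still yields the $\mathcal{MK}_\omega$ structure in (3).

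Item (4) is immediate from the formulas, with closedness of the $Z$-copy in $\Theta(Z)$ following because its image lies in the length-$1$ stratum and any limit outside that stratum has length $0$, hence equals the base point. For (5), the $\ge$ inequalities come from (4) and Proposition \ref{prop3}(1). For the reverse, the central tool I prove first is the standard direct-limit fact that every convergent sequence in a $k_\omega$-space is eventually contained in a single stage $K_n$; this reduces the computation of $\mathrm{cs}^*$-, $\mathrm{cs}$-, and pseudo-character to a single $K_n$, and lifting a countable network at $\infty\in\alpha Z$ via products with finite slices then yields a countable network of the same type at the base point of $\Xi(Z)$ or $\Theta(Z)$. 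The $\mathrm{sb}$-equality is asserted only for $\Theta(Z)$ because $\Xi(Z)$ will be shown in (7) to contain a closed $S_\omega$, forcing $\mathrm{sb}_\chi(\Xi(Z))\ge\mathfrak d$; in $\Theta(Z)$ the length-stratification prevents closed copies of $S_\omega$ and permits an $\mathrm{sb}$-lift by a more careful coordinate-by-coordinate selection.

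For item (6), a closed copy of $\alpha Z$ from (4) inside a sequential $\Xi(Z)$ is sequential, giving one direction. For the converse, when $\alpha Z$ is sequential each $K_n$ is compact sequential (using that finite Tychonov products of compact sequential spaces are sequential) and the inductive topology with respect to a countable cover by sequential subspaces is sequential. Item (7) uses a closed discrete sequence $(z_n)\subset Z$ provided by non-countable-compactness: I embed $S_\omega$ into $\Xi(Z)$ by placing $z_m$ in slot $n$ to represent the $m$-th point of the $n$-th spoke, with apex at the base point, and verify closedness from the $K_n$-confinement established in (5); a closed $S_2$-embedding is similar but uses one additional layer via the next coordinate. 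In $\Theta(Z)$ only $S_2$ is obtainable, placed via the length-stratification (apex at length $0$, the first-level convergent sequence at length $1$, and the branching sequences at length $2$).

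Finally, homogeneity (1) is the most delicate step and the main obstacle I anticipate. For $\Xi(Z)$ I compose three kinds of self-homeomorphisms: translations by $2^\omega$ in the first coordinate, coordinate permutations of $\omega$ (which preserve the almost-all-$\infty$ condition), and local homeomorphisms of $\alpha Z$ moving any non-$\infty$ point to any other non-$\infty$ point (available from local compactness, zero-dimensionality, and local metrizability). Their composition transports any point of $\Xi(Z)$ to the base point. For $\Theta(Z)$ the tempting invariant ``length $n$'' must actually be broken by a self-homeomorphism shifting length by $1$; such a map is built by selecting a clopen compact neighborhood of $\infty$ in $\alpha Z$ of the form $\{\infty\}\cup\bigsqcup_{k\in\omega}V_k$ and using it to absorb or emit a single coordinate in the sequence. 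Combined with the length-preserving symmetries above, this gives transitivity, and the careful construction of the length-shift homeomorphism is what will require the most extended argument.
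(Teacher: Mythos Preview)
Your treatment of items (2), (3), (5), (6), (7) is close to the paper's and essentially correct (the paper compresses these into one-line references to Proposition~\ref{prop3} and standard facts about inductive limits of sequential compacta). Two places, however, contain genuine gaps.

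\textbf{Item (1), homogeneity of $\Xi(Z)$.} The three families of self-homeomorphisms you list --- translations in $2^\omega$, permutations of $\omega$, and homeomorphisms of $\alpha Z$ moving a non-$\infty$ point to another non-$\infty$ point --- all preserve the \emph{number} of non-$\infty$ coordinates of a point $(c,(z_i))$. Hence no composition of them can carry a point with, say, two non-$\infty$ coordinates to the base point $(c_0,\infty,\infty,\dots)$. (For a concrete obstruction, if $Z$ is an uncountable discrete space then $\infty$ is the unique non-isolated point of $\alpha Z$, so no self-homeomorphism of $\alpha Z$ moves a point of $Z$ to $\infty$.) The paper avoids this by exploiting the $2^\omega$ factor: around each $z_i\ne\infty$ one chooses a clopen compact metrizable neighborhood $U_i\subset Z$, and then the product $U\times\prod_{i\in I}U_i$ (with $U\subset 2^\omega$ clopen) is a zero-dimensional metrizable compactum \emph{without isolated points}, hence homeomorphic to $2^\omega$ by Brouwer's theorem --- regardless of $|I|$. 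Matching two such ``charts'' for the source and target points (with possibly different $|I|$ and $|I'|$) via Brouwer, and patching with a monotone bijection of the remaining coordinates, gives the required homeomorphism. This is exactly why the $2^\omega$ factor is built into the definition of $\Xi(Z)$; your argument never uses it in an essential way. The same Brouwer trick works verbatim for $\Theta(Z)$, so the paper does not need your length-shift map; note also that your shift construction needs a decomposition $Z\setminus K=\bigsqcup_{k\in\omega}V_k$ into clopen pieces, which forces $Z$ to be $\sigma$-compact and hence Lindel\"of, a hypothesis the lemma does not assume.

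\textbf{Item (4), the closed copy of $Z$ in $\Theta(Z)$.} The length-$1$ embedding $z\mapsto(c_0,z,\infty,\infty,\dots)$ is \emph{not} closed: the compactum $2^\omega\times\alpha Z\times\{\infty\}^\omega$ lies in $\mathcal K_\Theta$ (take $i_0=0$), and inside it the closure of the image picks up $(c_0,\infty,\infty,\dots)\in\Theta(Z)$ whenever $Z$ is non-compact. The paper instead places a fixed ``guard'' point $z_0\in Z$ \emph{after} the variable coordinate, using $e(z)=(c_0,z,z_0,\infty,\infty,\dots)$; then any would-be limit with $z\to\infty$ lands at $(c_0,\infty,z_0,\infty,\dots)$, which violates the initial-segment condition defining $\Theta(Z)$ and hence does not belong to the space. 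Checking closedness against each $K\in\mathcal K_\Theta$ then goes through.
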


\begin{proof}
(1) First we show that the space $\Xi(Z)$ is topologically
homogeneous.

Given two points $(c,(z_i)_{i\in\omega}),
(c',(z'_i)_{i\in\omega})$ of $\Xi(Z)$  we have to find a
homeomorphism $h$ of $\Xi(Z)$ with
$h(c,(z_i)_{i\in\omega})=(c',(z'_i)_{i\in\omega})$. Since the
Cantor cube $2^\omega$ is topologically homogeneous, we can assume
that $c\ne c'$. Fix any disjoint closed-and-open neighborhoods
$U,U'$ of the points $c,c'$ in $2^\omega$, respectively.

Consider the finite sets $I=\{i\in\omega: z_i\ne\infty\}$ and
$I'=\{i\in\omega: z_i'\ne\infty\}$. Using the zero-dimensionality
and the local metrizability of $Z$, for each $i\in I$ (resp. $i\in
I'$) fix an open compact metrizable neighborhood $U_i$ (resp.
$U_i'$) of the point $z_i$ (resp. $z'_i$) in $Z$. By the classical
Brouwer Theorem \cite[7.4]{Ke}, the products $U\times \prod_{i\in
I}U_i$ and $U'\times \prod_{i\in I'}U'_i$, being zero-dimensional
compact metrizable spaces without isolated points, are
homeomorphic to the Cantor cube $2^\omega$. Now the topological
homogeneity of the Cantor cube implies the existence of a
homeomorpism $f:U\times \prod_{i\in I}U_i\to U'\times \prod_{i\in
I'}U'_i$ such that $f(c,(z_i)_{i\in I})=(c',(z'_i)_{i\in I'})$.
Let

$W=\{(x,(x_i)_{i\in\omega})\in \Xi(Z): x\in U,\; x_i\in U_i$ for
all $i\in I\}$ and

$W'=\{(x',(x'_i)_{i\in\omega})\in\Xi(Z): x'\in U',\; x'_i\in U'_i$
for all $i\in I'\}$.

\noindent It follows that $W,W'$ are disjoint open-and-closed
subsets of $\Xi(Z)$. Let $\chi:\omega\setminus I'\to \omega
\setminus I$ be a unique monotone bijection.

Now consider the homeomorphism $\tilde f:W\to W'$ assigning to a
sequence $(x,(x_i)_{i\in\omega})\in W$ the sequence
$(x',(x'_i)_{i\in\omega})\in W'$ where $(x',(x'_i)_{i\in
I'})=f(x,(x_i)_{i\in I})$ and $x'_i=x_{\chi(i)}$ for $i\notin I'$.
Finally, define a homeomorphism $h$ of $\Xi(Z)$ letting $$ h(x)=
  \begin{cases}
    x & \text{if $x\notin W\cup W'$;} \\
    \tilde f(x) & \text{if $x\in W$;}\\
    \tilde f^{-1}(x) & \text{if $x\in W'$}
  \end{cases}
  $$
and observe that
$h(c,(z_i)_{i\in\omega})=(c',(z'_i)_{i\in\omega})$ which proves
the topological homogeneity of the space $\Xi(Z)$.

Replacing $\Xi(Z)$ by $\Theta(Z)$ in the above proof, we shall get
a proof of the topological homogeneity of $\Theta(Z)$.

The items (2--4) follow easily from the definitions of the spaces
$\Xi(Z)$ and $\Theta(Z)$, the zero-dimensionality of $\alpha Z$,
and known properties of $k_\omega$-spaces, see \cite{FS} (to find
a closed copy of $Z$ in $\Theta(Z)$ consider the closed embedding
$e:Z\to\Theta(Z)$, $e:z\mapsto(z,z_0,z,\infty,\infty,\dots)$,
where $z_0$ is any fixed point of $Z$).

To prove (5) apply Proposition~\ref{prop3}(6,8,9,10). (To
calculate the $\mathrm{cs}^*$-, $\mathrm{cs}$-, and
$\mathrm{sb}$-characters of $\Theta(Z)$, observe that almost all
members of any sequence $(a_n)\subset \Theta(Z)$ convergent to a
point $a=(c,(z_i))\in\Theta(Z)$ lie in the compactum
$2^\omega\times \prod_{i\in\omega}C_i$, where $C_i$ is a clopen
neighborhood of $z_i$ if $z_i\ne\infty$, $C_i=\alpha Z$ if
$i=\min\{j\in\omega: z_j=\infty\}$ and $C_i=\{\infty\}$ otherwise.
By Proposition~\ref{prop3}(6), the $\mathrm{cs}^*$-,
$\mathrm{cs}$-, and $\mathrm{sb}$-characters of this compactum are
equal to the corresponding characters of $\alpha Z$.)

(6) Since the spaces $\Xi(Z)$ and $\Theta(Z)$ contain a copy of
$\alpha Z$, the sequentiality of $\Xi(Z)$ or $\Theta(Z)$ implies
the sequentiality of $\alpha Z$. Now suppose conversely that the
space $\alpha Z$ is sequential. Then each compactum
$K\in\mathcal{K}_\Xi\cup\mathcal{K}_\Theta$ is sequential since a
finite product of sequential compacta is sequential, see
\cite[3.10.I(b)]{En1}. Now  the spaces $\Xi(Z)$ and $\Theta(Z)$
are sequential because they carry the inductive topologies with
respect to the covers $\mathcal K_\Xi$, $\mathcal K_\Theta$ by
sequential compacta.

(7) If $Z$ is not countably compact, then it contains a countable
closed discrete subspace $S\subset Z$ which can be thought as a
sequence convergent to $\infty$ in $\alpha Z$. It is easy to see
that $\Xi(S)$ (resp. $\Theta(S)$) is a  closed subset of $\Xi(Z)$
(resp. $\Theta(Z)$). Now it is quite easy to find closed copies of
$S_2$ and $S_\omega$ in $\Xi(S)$ and a closed copy of $S_2$ in
$\Theta(S)$.
\end{proof}

With Lemma~\ref{homogeneous} at our disposal,  we are able to
finish the proof of Theorem~\ref{dowker}. To construct the
examples satisfying the conditions of Theorem~\ref{dowker}(2,3),
assume $\mathfrak b=\mathfrak c$ and use
Proposition~\ref{yakovlev} to find
 a locally
compact locally countable space $Z$ whose one-point
compactification $\alpha Z$ is  sequential and satisfies
$\aleph_0=\mathrm{sb}_\chi(\alpha Z)<\psi(\alpha Z)=\mathfrak c$.
Applying Lemma~\ref{homogeneous} to this space $Z$, we conclude
that the topologically homogeneous $k$-spaces $X_2=\Xi(Z)$ and
$X_3=\Theta(Z)$ give us required examples.

The example of a countable topologically homogeneous
$k_\omega$-space $X_1$ with $\mathrm{sb}_\chi(X_1)<\chi(X_1)$ can
be constructed by analogy with the space $\Theta(\mathbb{N})$
(with that difference that there is no necessity to involve the
Cantor cube) and is known in topology as the Ankhangelski-Franklin
space, see \cite{AF}. We briefly remind its construction. Let
$S_0=\{0,\frac1n:n\in\mathbb{N}\}$ be a convergent sequence and
consider the countable space $X_1=\{(x_i)_{i\in\omega}\in
S_0^\omega: \exists n\in\omega\mbox{ such that $x_i\ne 0$ iff
$i<n$}\}$ endowed with the strongest topology inducing the product
topology on each compactum $\prod_{i\in\omega}C_i$ for which there
is $n\in\omega$ such that $C_n=S_0$, $C_i=\{0\}$ if $i>n$, and
$C_i=\{x_i\}$ for some $x_i\in S_0\setminus \{0\}$ if $i<n$. By
analogy with the proof of Lemma~\ref{homogeneous} it can be shown
that $X_1$ is a topologically homogeneous $k_\omega$-space with
$\aleph_0=\mathrm{sb}_\chi(X_1)<\chi(X_1)=\mathfrak d$ and
$\mathrm{so}(X_1)=\omega$.

\subsection*{Proof of Proposition~\ref{sb}} The equivalences
$(1)\Leftrightarrow(2)\Leftrightarrow(3)$ were proved by Lin
\cite[3.13]{Lin} in terms of (universally) $csf$-countable spaces.
To prove the other equivalences apply

\begin{lem}\label{alpha4} A Hausdorff topological space $X$ is an
$\alpha_4$-space provided one of the following conditions is
satisfied:
\begin{enumerate}
\item $X$ is a Fr\'echet-Urysohn $\alpha_7$-space;
\item $X$ is a Fr\'echet-Urysohn countably compact space;
\item $\mathrm{sb}_\chi(X)<\mathfrak p$;
\item $\mathrm{sb}_\chi(X)<\mathfrak d$, each point of $X$ is regular
$G_\delta$, and $X$ is $c$-sequential.
\end{enumerate}
\end{lem}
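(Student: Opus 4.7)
The plan is to treat the four cases separately, exploiting the fact that (2) reduces to (1), while (3) and (4) share a common cardinal-invariant flavor based on dominating functions in $\omega^\omega$.

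For (3) I would proceed directly. Given a countable $\mathrm{sb}$-network $\mathcal{B}$ at $x$ of cardinality $<\mathfrak{p}$ and sequences $S_n\to x$ (which we may assume pairwise disjoint and avoiding $x$), fix enumerations $S_n=(s_{n,k})_{k\in\omega}$ such that $s_{n,k}\to x$ as $k\to\infty$. Because each $B\in\mathcal{B}$ is a sequential barrier at $x$, the integer $g_B(n)=\min\{k:s_{n,j}\in B\text{ for all }j\geq k\}$ is finite, producing a function $g_B\in\omega^\omega$. The family $\{g_B:B\in\mathcal{B}\}$ has size $<\mathfrak{p}\leq\mathfrak{b}$, hence admits some $h\in\omega^\omega$ with $g_B\leq^*h$ for every $B$. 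The diagonal sequence $a_n=s_{n,h(n)}$ satisfies $\{a_n\}\setminus B$ finite for each $B\in\mathcal{B}$, whence $\{a_n\}\to x$, and by disjointness of the $S_n$'s it meets each $S_n$, witnessing $\alpha_4$.

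For (1) the key observation is that in a Fr\'echet-Urysohn Hausdorff space every topological copy of $S_\omega$ is automatically closed, since any convergent sequence in such a copy must converge within the copy. Assuming $X$ is Fr\'echet-Urysohn and $\alpha_7$ but not $\alpha_4$, fix witnessing disjoint sequences $S_n\to x$ and form the subspace $Y=\{x\}\cup\bigcup_n S_n$. Hereditariness of Fr\'echet-Urysohness makes $Y$ sequential; together with the failure of $\alpha_4$ this forces the topology of $Y$ at $x$ to coincide with the $S_\omega$-topology, so $Y$ is a topological copy of $S_\omega$ in $X$. By the observation it is closed, contradicting the $\alpha_7$-hypothesis, which forbids closed $S_\omega$-copies in $\sigma X=X$. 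For (2) I would first check that a Fr\'echet-Urysohn countably compact Hausdorff space is $\alpha_7$: pick $a_n\in S_n$, use countable compactness to get a cluster point $y$ of $\{a_n\}$, and apply Fr\'echet-Urysohness at $y$ to $\{a_n\}$ to extract a convergent sequence $S\subseteq\{a_n\}$ meeting infinitely many $S_n$. Case (1) then closes the argument.

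Case (4) is the main obstacle. Imitating the strategy of (1), the idea is to show that a failure of $\alpha_4$, under the extra assumptions of $c$-sequentiality and $x$ being regular $G_\delta$, produces a subspace at $x$ whose local $\mathrm{sb}$-character is at least $\mathfrak{d}$, violating $\mathrm{sb}_\chi(X)<\mathfrak{d}$. Restriction of any $\mathrm{sb}$-network $\mathcal{B}$ at $x$ in $X$ to $Y=\{x\}\cup\bigcup_n S_n$ always yields an $\mathrm{sb}$-network at $x$ in $Y$ of size $\leq|\mathcal{B}|$, so the task is to bound $\mathrm{sb}_\chi(Y,x)\geq\mathfrak{d}$ from below. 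The regular $G_\delta$ property supplies a decreasing countable sequence of closed neighborhoods $\{x\}=\bigcap_k U_k$, while $c$-sequentiality ensures that countable subsets of $\bigcup_n S_n$ that fail to accumulate at $x$ are closed in $X$; together these guarantee that enough $S_\omega$-shaped sets $\{x\}\cup\bigcup_n(S_n\setminus F_n)$ (with $F_n\subset S_n$ finite) are realized as traces of open sets of $X$, hence as genuine neighborhoods of $x$ in $Y$. Encoding each $B\in\mathcal{B}$ via the function $h_B(n)=\min\{k:s_{n,j}\in B\text{ for }j\geq k\}$, the sb-network condition against this abundance of $S_\omega$-neighborhoods forces $\{h_B\}$ to be pointwise dominating modulo finite, which requires $|\mathcal{B}|\geq\mathfrak{d}$. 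The delicate step, and the real crux, is precisely verifying that the $c$-sequential plus regular $G_\delta$ hypotheses produce enough realized $S_\omega$-neighborhoods for this dominating argument to bite.
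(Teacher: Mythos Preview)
Your argument for part (1) has a genuine gap: the ``key observation'' that every topological copy of $S_\omega$ in a Fr\'echet--Urysohn Hausdorff space is closed is false. For a counterexample, take the underlying set of $S_\omega$ with center $x$ and spines $S_n=\{a_{n,k}:k\in\omega\}$, adjoin a new point $y$, and declare $U$ open iff (i) $x\in U$ implies $U\supset\{a_{n,k}:k\ge f(n)\}$ for some $f:\omega\to\{1,2,\dots\}$, and (ii) $y\in U$ implies $U\supset\{a_{n,0}:n\ge N\}$ for some $N$. One checks directly that the resulting space $X$ is Fr\'echet--Urysohn and Hausdorff, that $Y=X\setminus\{y\}$ with the subspace topology is homeomorphic to $S_\omega$, and that the diagonal $T=\{a_{n,0}:n\in\omega\}\subset Y$ converges in $X$ to $y\notin Y$, so $Y$ is not closed. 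Your justification (``any convergent sequence in such a copy must converge within the copy'') is precisely what fails here. Consequently the scheme---produce a copy of $S_\omega$ from the failure of $\alpha_4$, upgrade it to closed via the false observation, contradict $\alpha_7$---collapses at the second step. The paper's route for (1) avoids the $S_\omega$-characterization entirely: after preprocessing so that $S_n\subset U_n$ for a decreasing chain of closed neighborhoods with $(\bigcap_n U_n)\cap\bigcup_n S_n=\{x\}$, it forms the set $A$ of all limits of sequences meeting infinitely many $S_n$, uses $\alpha_7$ to show $x\in\overline{A}$, and then applies the Fr\'echet--Urysohn property twice (once to $A$, once to a union of witnessing sequences $T_n\to a_n$) to manufacture the required $\alpha_4$-diagonal directly.

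Your treatments of (2), (3), and (4) are essentially sound. For (3) you route through $\mathfrak p\le\mathfrak b$ and a bounding argument, whereas the paper applies the pseudo-intersection definition of $\mathfrak p$ directly to the family $\{N\cap\bigcup_{i\ge n}S_i:N\in\mathcal N,\ n\in\omega\}$; both work (though your $\mathcal B$ need only have size $<\mathfrak p$, not be countable). For (4) your sketch is the contrapositive of the paper's argument: the paper encodes each $N$ in the $\mathrm{sb}$-network by $f_N\in\omega^\omega$ exactly as you do, picks $f$ not dominated by any $f_N$ (using $|\mathcal N|<\mathfrak d$), forms $S=\{x_{n,f(n)}:n\in\omega\}$, uses the regular $G_\delta$ hypothesis (choosing $\bigcap_n U_n=\{x\}$ with $S_n\subset U_n$) to see that $x$ is the unique cluster point of $S$ so that $\{x\}\cup S$ is closed, and then invokes $c$-sequentiality to extract the convergent subsequence witnessing $\alpha_4$. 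Your ``delicate step'' is thus handled cleanly and directly, without needing to realize $S_\omega$-shaped neighborhoods in $X$.
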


\begin{proof} Fix any point
$x\in X$ and a countable family $\{S_n\}_{n\in\omega}$ of
sequences convergent to $x$ in $X$. We have to find a sequence
$S\subset X\setminus\{x\}$ convergent to $x$ and meeting
infinitely many sequences $S_n$. Using the countability of the set
$\bigcup_{n\in\omega}S_n$ find a decreasing sequence
$(U_n)_{n\in\omega}$ of closed neighborhoods of $x$ in $X$ such
that
$\big(\bigcap_{n\in\omega}U_n\big)\cap\big(\bigcup_{n\in\omega}
S_n)=\{x\}$. Replacing each sequence $S_n$ by its subsequence
$S_n\cap U_n$, if necessary, we can assume that $S_n\subset U_n$.

(1) Assume that $X$ is a Fr\'echet-Urysohn $\alpha_7$-space. Let
$A=\{a\in X: a$ is the limit of a convergent sequence $S\subset X$
meeting infinitely many sequences $S_n\}$. It follows from our
assumption on $(S_n)$ and $(U_n)$ that
$A\subset\bigcap_{n\in\omega}U_n$.

It suffices to consider the non-trivial case when $x\notin A$. In
this case $x$ is a cluster point of $A$ (otherwise $X$ would be
not $\alpha_7$). Since $X$ is Fr\'echet-Urysohn, there is a
sequence $(a_n)\subset A$ convergent to $x$. By the definition of
$A$, for every $n\in\omega$ there is a sequence $T_n\subset X$
convergent to $a$ and meeting infinitely many sequences $S_n$.
Without loss of generality, we can assume that
$T_n\subset\bigcup_{i>n}S_i$ (because $a\in A\setminus\{x\}$ and
thus $a\notin \bigcup_{n\in\omega}S_n$). It is easy to see that
$x$ is a cluster point of the set $\bigcup_{n\in\omega}T_n$. Since
$X$ is Fr\'echet-Urysohn, there is a sequence
$T\subset\bigcup_{n\in\omega}T_n$ convergent to $x$.

Now it rests to show that the set $T$ meets infinitely many
sequences $S_n$. Assuming the converse we would find $n\in\omega$
such that $T\subset\bigcup_{i\le n}S_n$. Then
$T\subset\bigcup_{i\le n}T_n$ which is not possible since
$\bigcup_{i\le n}T_i$ is a compact set failing to contain the
point $x$.

(2) If $X$ is Fr\'echet-Urysohn and countably compact, then it is
sequentially compact and hence $\alpha_7$, which allows us to
apply the previous item.

(3) Assume that $\mathrm{sb}_\chi(X)<\mathfrak p$ and let
$\mathcal N$ be a $\mathrm{sb}$-network at $x$ of size $|\mathcal
N|<\mathfrak p$. Without loss of generality, we can assume that
the family $\mathcal{N}$ is closed under finite intersections. Let
$S=\bigcup_{n\in\omega} S_n$ and $F_{N,n}=N\cap (\bigcup_{i\ge
n}S_i)$ for $N\in\mathcal{N}$ and $n\in\omega$. It is easy to see
that the family $\mathcal F=\{F_{N,n}:N\in\mathcal{N},\;
n\in\omega\}$ consists of infinite subsets of $S$, has size
$|\mathcal F|<\mathfrak p$, and is closed under finite
intersection. Now the definition of the small cardinal $\mathfrak
p$ implies that this family $\mathcal{F}$ has an infinite
pseudo-intersection $T\subset S$. Then $T$ is a sequence
convergent to $x$ and intersecting infinitely many sequences
$S_n$. This shows that $X$ is an $\alpha_4$-space.

(4) Assume that the space $X$ is $c$-sequential, each point of $X$
is regular $G_\delta$, and $\mathrm{sb}_\chi(X)<\mathfrak d$. In
this case we can choose the sequence $(U_n)$ to satisfy
$\bigcap_{n\in\omega}U_n=\{x\}$. Fix an $\mathrm{sb}$-network
$\mathcal N$ at $x$ with $|\mathcal N|<\mathfrak d$. For every
$n\in\omega$ write $S_n=\{x_{n,i}:i\in\mathbb{N}\}$. For each
sequential barrier $N\in\mathcal N$ find a function
$f_N:\omega\to\mathbb{N}$ such that $x_{n,i}\in N$ for every
$n\in\omega$ and $i\ge f_N(n)$. The family of functions
$\{f_N:N\in\mathcal N\}$ has size $<\mathfrak d$ and hence is not
cofinal in $\mathbb{N}^\omega$. Consequently, there is a function
$f:\omega\to\mathbb{N}$ such that $f\not\le f_N$ for each
$N\in\mathcal{N}$. Now consider the sequence
$S=\{x_{n,f(n)}:n\in\omega\}$. We claim that $x$ is a cluster
point of $S$. Indeed, given any neighborhood $U$ of $x$, find a
sequential barrier $N\in\mathcal{N}$ with $N\subset U$. Since
$f\not\le f_N$, there is $n\in\omega$ with $f(n)>f_N(n)$. It
follows from the choice of the function $f_N$ that $x_{n,f(n)}\in
N\subset U$.

Since $S\setminus U_n$ is finite for every $n$,
$\{x\}=\bigcap_{n\in\omega}U_n$ is a unique cluster point of $S$
and thus $\{x\}\cup S$ is a closed subset of $X$. Now the
$c$-sequentiality of $X$ implies the existence of a sequence
$T\subset S$ convergent to $x$. Since $T$ meets infinitely many
sequences $S_n$, the space $X$ is $\alpha_4$.
\end{proof}

\subsection*{Proof of Proposition~\ref{chi}} Suppose a space $X$
has countable $\mathrm{cs}^*$-character. The implications
$(1)\Rightarrow(2,3,4,5)$ are trivial. The equivalence
$(1)\Leftrightarrow(2)$ follows from Proposition~\ref{prop1}(2).
To show that $(3)\Rightarrow(2)$, apply Lemma~\ref{alpha4} and
Proposition~\ref{sb}($3\Rightarrow1$).

To prove that $(4)\Rightarrow(2)$ it suffices to apply
Proposition~\ref{sb}($4\Rightarrow1$) and observe that $X$ is
Fr\'echet-Urysohn provided $\chi(X)<\mathfrak p$ and $X$ has
countable tightness. This can be seen as follows.

 Given a subset $A\subset X$ and a
point $a\in\bar A$ from its closure, use the countable tightness
of $X$ to find a countable subset $N\subset A$ with $a\in\bar N$.
Fix any neighborhood base $\mathcal B$ at $x$ of size $|\mathcal
B|<\mathfrak p$. We can assume that $\mathcal B$ is closed under
finite intersections. By the definition of the small cardinal
$\mathfrak p$, the family $\{B\cap N:B\in\mathcal B\}$ has
infinite pseudo-intersection $S\subset N$. It is clear that
$S\subset A$ is a sequence convergent to $x$, which proves that
$X$ is Fr\'echet-Urysohn.

$(5)\Rightarrow (2)$. Assume that $X$ is a sequential space
containing no closed copies of $S_\omega$ and $S_2$ and such that
each point of $X$ is regular $G_\delta$. Since $X$ is sequential
and contains no closed copy of $S_2$, we may apply Lemma 2.5
\cite{Lin} to conclude that $X$ is Fr\'echet-Urysohn. Next,
Theorem 3.6 of \cite{Lin} implies that $X$ is an $\alpha_4$-space.
Finally apply Proposition~\ref{sb} to conclude that $X$ has
countable $\mathrm{sb}$-character and, being Fr\'echet-Urysohn, is
first countable.

The final implication $(6)\Rightarrow(2)$ follows from
$(5)\Rightarrow(2)$ and the well-known equality
$\chi(S_\omega)=\chi(S_2)=\mathfrak d$.

\subsection*{Proof of Proposition~\ref{dyadic}} \par

The first item of this proposition follows from
Proposition~\ref{chi}($3\Rightarrow1$) and the observation that
each Fr\'echet-Urysohn countable compact space, being sequentially
compact, is $\alpha_7$.

Now suppose that $X$ is a dyadic compact with
$\mathrm{cs}^*_\chi(X)\le\aleph_0$. If $X$ is not metrizable, then
it contains a copy of the one-point compactification $\alpha D$ of
an uncountable discrete space $D$, see \cite[3.12.12(i)]{En1}.
Then $\mathrm{cs}^*_\chi(\alpha
D)\le\mathrm{cs}^*_\chi(X)\le\aleph_0$ and by the previous item,
the space $\alpha D$, being Fr\'echet-Urysohn and compact, is
first-countable, which is a contradiction.

\subsection*{Proof of Proposition~\ref{alphaD}}
Let $D$ be a discrete space.

(1) Let $\kappa=\mathrm{cs}^*_\chi(\alpha D)$ and $\lambda_1$
($\lambda_2$) is the smallest weight of a (regular
zero-dimensional) space $X$ of size $|X|=|D|$, containing no
non-trivial convergent sequence. To prove the first item of
proposition~\ref{alphaD} it suffices to verify that
$\lambda_2\le\kappa\le\lambda_1$. To show that
$\lambda_2\le\kappa$, fix any $\mathrm{cs}^*$-network $\mathcal N$
at the unique non-isolated point $\infty$ of $\alpha D$ of size
$|\mathcal N|\le\kappa$. The algebra $\mathcal A$ of subsets of
$D$ generated by the family $\{D\setminus N:N\in\mathcal N\}$ is a
base of some zero-dimensional topology $\tau$ on $D$ with
$w(D,\tau)\le\kappa$. We claim that the space $D$ endowed with
this topology contains no infinite convergent sequences. To get a
contradiction, suppose that $S\subset D$ is an infinite sequence
convergent to a point $a\in D\setminus S$. Then $S$ converges to
$\infty$ in $\alpha D$ and hence, there is an element
$N\in\mathcal N$ such that $N\subset \alpha D\setminus\{a\}$ and
$N\cap S$ is infinite. Consequently, $U=D\setminus N$ is a
neighborhood of $a$ in the topology $\tau$ such that $S\setminus
U$ is infinite which contradicts to the fact that $S$ converges to
$a$. Now consider the equivalence relation $\sim$ on $D$: $x\sim
y$ provided for every $U\in\tau$\; $(x\in U)\Leftrightarrow(y\in
U)$. Since the space $(D,\tau)$ has no infinite convergent
sequences, each equivalence class $[x]_\sim\subset D$ is finite
(because it carries the anti-discrete topology). Consequently, we
can find a subset $X\subset D$ of size $|X|=|D|$ such that
$x\not\sim y$ for any distinct points $x,y\in X$. Clearly that
$\tau$ induces a zero-dimensional topology on $X$. It rests to
verify that this topology is $T_1$. Given any two distinct point
$x,y\in X$ use $x\not\sim y$ to find an open set $U\in\mathcal A$
such that either $x\in U$ and $y\notin U$ or $x\notin U$ and $y\in
U$. Since $D\setminus U\in\mathcal A$, in both cases we find an
open set $W\in\mathcal A$ such that $x\in W$ but $y\notin W$. It
follows that $X$ is a $T_1$-space containing no non-trivial
convergent sequence and thus $\lambda_2\le w(X)\le|\mathcal
A|\le\mathcal|N|\le\kappa$.

To show that $\kappa\le\lambda_1$, fix any topology $\tau$ on $D$
such that $w(D,\tau)\le\lambda_1$ and the space $(D,\tau)$
contains no non-trivial convergent sequences. Let $\mathcal B$ be
a base of the topology $\tau$ with $|\mathcal B|\le\lambda_1$,
closed under finite unions. We claim that the collection $\mathcal
N=\{\alpha D\setminus B:B\in\mathcal B\}$ is a
$\mathrm{cs}^*$-network for $\alpha D$ at $\infty$. Fix any
neighborhood $U\subset\alpha D$ of $\infty$ and any sequence
$S\subset D$ convergent to $\infty$. Write
$\{x_1,\dots,x_n\}=\alpha D\setminus U$ and by finite induction,
for every $i\le n$ find a neighborhood $B_i\in\mathcal B$ of $x_i$
such that $S\setminus\bigcup_{j\le i}B_j$ is infinite. Since
$\mathcal B$ is closed under finite unions, the set $N=\alpha
D\setminus(B_1\cup\dots\cup B_n)$ belongs to the family $\mathcal
N$ and has the properties: $N\subset U$ and $N\cap S$ is infinite,
i.e., $\mathcal N$ is a $\mathrm{cs}^*$-network at $\infty$ in
$\alpha D$. Thus $\kappa\le|\mathcal N|\le|\mathcal
B|\le\lambda_1$. This finishes the proof of (1).

An obvious modification of the above argument gives also a proof
of the item (2).

\subsection*{Proof of Proposition~\ref{alphaD1}} Let $D$ be an
uncountable discrete space.

(1) The inequalities
$\aleph_1\cdot\log|D|\le\mathrm{cs}^*_\chi(\alpha
D)\le\mathrm{cs}_\chi(\alpha D)$ follows from
Propositions~\ref{dyadic}(1) and \ref{prop1}(2,4) yielding
$|D|=\chi(\alpha D)=\mathrm{sb}_\chi(\alpha D)\le
2^{\mathrm{cs}_\chi^*(\alpha D)}$. The inequality
$\mathrm{cs}_\chi(\alpha D)\le \mathfrak
c\cdot\mathrm{cof}([\log|D|]^{\le\omega})$ follows from
proposition~\ref{alphaD}(2) and the observation that the product
$\{0,1\}^{\log|D|}$ endowed with the $\aleph_0$-box product
topology has weight $\le\mathfrak
c\cdot\mathrm{cof}([\log|D|]^{\le\omega})$. Under the {\em
$\aleph_0$-box product topology} on $\{0,1\}^\kappa$ we understand
the topology generated by the base consisting of the sets
$\{f\in\{0,1\}^\kappa:f|C=g|C\}$ where $g\in\{0,1\}^\omega$ and
$C$ is a countable subset of $\kappa$.

The item (2) follows from (1) and the equality
$\aleph_1\cdot\log\kappa=2^{\aleph_0}\cdot\min\{\kappa,(\log\kappa)^\omega\}$
holding under GCH for any infinite cardinal $\kappa$, see
\cite[9.3.8]{HJ}

\newpage

\address{Department of Mathematics, Ivan Franko Lviv National University,
Universytetska 1, Lviv, 79000, Ukraine}
\email{tbanakh@franko.lviv.ua}
\end{document}